\pdfoutput=1
\RequirePackage{silence}
\documentclass[a4paper,11pt]{amsart}
\usepackage[hmarginratio={1:1},vmarginratio={1:1},lmargin=60.0pt,tmargin=60.0pt]{geometry}

\allowdisplaybreaks

\usepackage[numbers]{natbib}
\usepackage[utf8]{inputenc}

\usepackage{latexsym,exscale,mathtools,textcomp}
\usepackage{amssymb,amsmath,amsthm,amsfonts,mathrsfs,bbm,enumitem,stmaryrd}
\usepackage{blkarray}
\usepackage[table]{xcolor}
\usepackage{graphicx}
\usepackage{mathtools}
\usepackage{ytableau}

\usepackage{xparse}

\usepackage{dynkin-diagrams}

\setlist[enumerate]{itemsep=0.15cm,label=\emph{\upshape(\alph*)}}
\setlist[enumerate,2]{itemsep=0.15cm,label=\emph{\upshape(\roman*)}}

\usepackage{array}
\newcolumntype{C}{>{$}c<{$}}

\definecolor{mygray}{gray}{0.6}
\definecolor{mygraydark}{gray}{0.4}
\definecolor{mygraylight}{gray}{0.85}
\definecolor{spinach}{RGB}{46,139,87}
\definecolor{tomato}{RGB}{255,99,71}
\definecolor{orchid}{RGB}{143,40,194}
\definecolor{neon}{RGB}{77,77,255}
\definecolor{pumpkin}{RGB}{224,180,80}
\definecolor{citron}{RGB}{190,180,90}

\definecolor{lava}{RGB}{207,16,32}
\definecolor{cream}{RGB}{255,253,208}
\definecolor{verdigris}{RGB}{67,179,174}
\definecolor{Black}{RGB}{0,0,0}
\definecolor{mydarkblue}{RGB}{10,10,170}
\definecolor{darkspinach}{RGB}{20,70,20}
\definecolor{darktomato}{RGB}{155,40,30}
\definecolor{darkorchid}{RGB}{50,10,100}
\definecolor{darklava}{RGB}{150,8,16}

\usepackage{todonotes}

\let\emph\relax
\DeclareTextFontCommand{\emph}{\bfseries\em}

\newcommand{\placeholder}{{}_{-}}

\renewcommand{\dots}{\text{...}}
\newcommand{\mystrut}{\rule[-0.2\baselineskip]{0pt}{0.9\baselineskip}}

\let\<=\langle
\let\>=\rangle

\renewcommand{\dots}{\text{...}}
\renewcommand{\vdots}{\rotatebox{90}{\text{...}}}
\renewcommand{\ddots}{\raisebox{0.25cm}{\rotatebox{-45}{\text{...}}}}

\DeclarePairedDelimiterX{\set}[1]{\{}{\}}{\setargs{#1}}
\NewDocumentCommand{\setargs}{>{\SplitArgument{1}{|}}m}{\setargsaux#1}
\NewDocumentCommand{\setargsaux}{mm}
{\IfNoValueTF{#2}{#1} {#1\,\delimsize|\,\mathopen{}#2}}

\newcommand{\ie}{\text{i.e.}}
\newcommand{\eg}{\text{e.g.}}

\newcommand{\etc}{\text{etc.}}

\newcommand{\C}{\mathbb{C}}

\newcommand{\N}{\mathbb{Z}_{\geq 0}}

\newcommand{\Z}{\mathbb{Z}}
\newcommand{\K}{\mathbb{K}}

\newcommand{\chark}{\mathrm{char}(\mathbb{K})}

\newcommand{\mrk}[1][\K]{\mathrm{rank}_{#1}}
\renewcommand{\dim}[1][\K]{\mathrm{dim}_{#1}}

\newcommand{\End}{\mathrm{End}}
\newcommand{\Hom}{\mathrm{Hom}}

\newcommand{\algebra}[1][A]{\mathscr{#1}}

\newcommand{\Pcal}{\mathcal{P}}
\newcommand{\apex}{\mathcal{P}^{ap}}
\newcommand{\Tcal}{\mathcal{T}}
\newcommand{\Bcal}{\mathcal{B}}
\newcommand{\cellbasis}[1][\mathscr{A}]{B_{\mathscr{A}}}
\newcommand{\calg}[1][\lambda]{\mathscr{A}_{#1}}
\newcommand{\sandorder}[1][\Pcal]{<_{#1}}
\newcommand{\rsandorder}[1][\Pcal]{>_{#1}}
\newcommand{\gsandorder}[1][\Pcal]{\geq_{#1}}
\newcommand{\lsandorder}[1][\Pcal]{\leq_{#1}}
\newcommand{\sand}[1][\lambda]{\mathscr{H}_{#1}}
\newcommand{\sandbasis}[1][\lambda]{B_{#1}}

\newcommand{\gmatrix}[1][\lambda]{\mathrm{G}^{#1}}

\newcommand{\dmod}[1][{\lambda,K}]{\Delta(#1)}
\newcommand{\modd}[1][{\lambda,K}]{\nabla(#1)}
\newcommand{\lmod}[1][{\lambda,K}]{L(#1)}

\newcommand{\usummand}{\inplus}

\newcommand{\rad}[1][\jcell]{\mathrm{Rad}}

\newcommand{\lcell}{\mathcal{L}}
\newcommand{\rcell}{\mathcal{R}}
\newcommand{\jcell}{\mathcal{J}}
\newcommand{\hcell}{\mathcal{H}}

\newcommand{\jideal}[1][\lambda]{\algebra^{>_{lr}\lambda}}

\newcommand{\module}[1][M]{{#1}}

\newcommand{\monoid}[1][S]{\mathrm{#1}}

\newcommand{\onemon}{\mathrm{1}}

\newcommand{\xmon}[2][n]{\mathrm{D}_{#2}(#1)}
\newcommand{\mxmon}[2][n]{\mathrm{MD}_{#2}(#1)}

\newcommand{\sym}[1][n]{\mathrm{S}_{#1}}
\newcommand{\psym}[1][n]{\mathrm{S}_{#1}^{p}}
\newcommand{\msym}[1][n]{\mathrm{MS}_{#1}}
\newcommand{\mpsym}[1][n]{\mathrm{MS}_{#1}^{p}}

\newcommand{\pamon}[1][n]{\mathrm{Pa}_{1}(#1)}
\newcommand{\robrmon}[1][n]{\mathrm{RoBr}_{1}(#1)}
\newcommand{\romon}[1][n]{\mathrm{Ro}_{1}(#1)}
\newcommand{\ppamon}[1][n]{\mathrm{Pa}_{1}^{p}(#1)}
\newcommand{\probrmon}[1][n]{\mathrm{RoBr}_{1}^{p}(#1)}
\newcommand{\brmon}[1][n]{\mathrm{Br}_{1}(#1)}
\newcommand{\tlmon}[1][n]{\mathrm{TL}_{1}(#1)}
\newcommand{\momon}[1][n]{\mathrm{Mo}_{1}(#1)}
\newcommand{\promon}[1][n]{\mathrm{Ro}_{1}^{p}(#1)}
\newcommand{\pbrmon}[1][n]{\mathrm{Br}_{1}^{p}(#1)}

\newcommand{\para}{\delta}
\newcommand{\paraa}{\alpha}
\newcommand{\parab}{\beta}
\newcommand{\parac}{\gamma}
\newcommand{\parap}{\delta_{+}}
\newcommand{\param}{\delta_{-}}

\newcommand{\mpamon}[2][n]{\mathrm{MPa}_{#2}(#1)}
\newcommand{\mrobrmon}[2][n]{\mathrm{MRoBr}_{#2}(#1)}
\newcommand{\mromon}[2][n]{\mathrm{MRo}_{#2}(#1)}
\newcommand{\mppamon}[2][n]{\mathrm{MPa}_{#2}^{p}(#1)}

\newcommand{\mbrmon}[2][n]{\mathrm{MBr}_{#2}(#1)}
\newcommand{\mtlmon}[2][n]{\mathrm{MTL}_{#2}(#1)}
\newcommand{\mmomon}[2][n]{\mathrm{MMo}_{#2}(#1)}
\newcommand{\mpromon}[2][n]{\mathrm{MRo}_{#2}^{p}(#1)}

\newcommand{\In}{\mathcal{I}}

\newcommand{\pacat}[1][1]{\mathbf{Pa}_{#1}}
\newcommand{\robrcat}[1][1]{\mathbf{RoBr}_{#1}}

\newcommand{\ppacat}[1][1]{\mathbf{Pa}_{#1}^{p}}

\newcommand{\brcat}[1][1]{\mathbf{Br}_{#1}}
\newcommand{\tlcat}[1][1]{\mathbf{TL}_{#1}}
\newcommand{\mocat}[1][1]{\mathbf{Mo}_{#1}}

\newcommand{\xcat}[1][1]{\mathbf{D}_{#1}}
\newcommand{\mxcat}[1][1]{\mathbf{MD}_{#1}}
\newcommand{\dashmxcat}[1][1]{\mathbf{MD}_{#1}'}

\newcommand{\mpcat}[1]{\mathbf{MPa}_{#1}}
\newcommand{\mvcat}[1]{\underline{\mathbf{MPa}}_{#1}}
\newcommand{\delcat}[1]{\mathbf{Rep}(S_{#1})}
\newcommand{\mdelcat}[3]{\mathbf{MRep}(S_{#1, #2, #3})}
\newcommand{\cob}{\mathbf{2Cob}}
\newcommand{\mcob}[1]{\mathbf{MCob}_{#1}}

\newcommand{\mucob}[3]{\mathbf{MCob}_{#1, #2, #3}}

\newcommand{\tr}{\mathrm{Tr}}

\usepackage[all]{xy}
\usepackage{tikz}
\usetikzlibrary{cd}
\usetikzlibrary{decorations}
\usetikzlibrary{decorations.markings}
\usetikzlibrary{decorations.pathreplacing}
\usetikzlibrary{decorations.pathmorphing}
\usetikzlibrary{arrows.meta,shapes,positioning,matrix,calc}
\usetikzlibrary{shapes.callouts}
\usetikzlibrary{shapes.arrows}
\usetikzlibrary{tqft}
\usetikzlibrary{scopes,intersections}

\tikzset{
anchorbase/.style={baseline={([yshift=#1]current bounding box.center)}},
anchorbase/.default={-0.5ex},
tinynodes/.style={font=\tiny,text height=0.25ex,text depth=0.05ex},
smallnodes/.style={font=\scriptsize,text height=0.75ex,text depth=0.15ex},
mor/.style={line width=0.75,color=black,fill=cream},
mor2/.style={line width=0.75,color=black,fill=tomato},
mor3/.style={line width=0.75,color=black,fill=spinach},
usual/.style={line width=1.2,color=black},
crossline/.style={preaction={draw=white,line width=5.0pt,-},preaction={draw=black,line width=0.9pt,-}},
hol/.style = {
decoration={markings,
post length=0.25mm,
pre length=0.25mm,
mark=at position #1 with {\node[circle,radius=0.15cm,inner sep=-1.2pt,draw,color=black,fill=white]{};}
},
postaction={decorate}
},
mob/.style = {
decoration={markings,
post length=0.25mm,
pre length=0.25mm,
mark=at position #1 with {\node[circle,radius=0.15cm,inner sep=-1.2pt,draw, color=tomato,fill=tomato]{};}
},
postaction={decorate}
},
dot/.style = {
decoration={markings,
post length=0.25mm,
pre length=0.25mm,
mark=at position #1 with {\node[circle,radius=0.15cm,inner sep=-1.2pt,color=black,fill=black]{};}
},
postaction={decorate}
},
dot/.default=1,
}
\tikzstyle directed=[postaction={decorate,decoration={markings,
mark=at position #1 with {\arrow[line width=0.25mm, black]{>}}}}]

\let\oldlightning\lightning
\renewcommand{\lightning}{\textcolor{tomato}{\pmb{\oldlightning}}}

\usepackage{aliascnt,etoolbox}
\def\NewTheorem#1{%
\newaliascnt{#1}{equation}%
\newtheorem{#1}[#1]{#1}%
\aliascntresetthe{#1}%
\expandafter\def\csname #1autorefname\endcsname{#1}%
}
\def\equationautorefname~#1\null{(#1)\null}

\numberwithin{equation}{subsection}

\NewTheorem{Proposition}
\NewTheorem{Theorem}
\NewTheorem{Corollary}
\AtEndEnvironment{Corollary}{\null\hfill$\square$}%
\NewTheorem{Lemma}
\theoremstyle{definition}
\NewTheorem{Definition}
\AtEndEnvironment{Definition}{\null\hfill$\Diamond$}%
\NewTheorem{Notation}
\AtEndEnvironment{Notation}{\null\hfill$\Diamond$}%
\NewTheorem{Example}
\AtEndEnvironment{Example}{\null\hfill$\Diamond$}%
\theoremstyle{remark}
\NewTheorem{Remark}
\AtEndEnvironment{Remark}{\null\hfill$\Diamond$}%

\usepackage[hypertexnames=false]{hyperref}
\usepackage{bookmark}
\hypersetup{
pdftoolbar=true,
pdfmenubar=true,
pdffitwindow=false,
pdfstartview={FitH},
pdftitle={Möbius Strip Diagram Algebras},
pdfauthor={Daniel W. Collison and Daniel Tubbenhauer},
pdfsubject={},
pdfcreator={Daniel W. Collison and Daniel Tubbenhauer},
pdfproducer={Daniel W. Collison and Daniel Tubbenhauer},
pdfkeywords={},
pdfnewwindow=true,
colorlinks=true,
linkcolor=mydarkblue,
citecolor=teal,
filecolor=magenta,
urlcolor=orchid,
linkbordercolor=lava,
citebordercolor=teal,
urlbordercolor=orchid,
linktocpage=true
}

\newcommand{\nnfootnote}[1]{%
\begin{NoHyper}
\renewcommand\thefootnote{}\footnote{#1}%
\addtocounter{footnote}{-1}%
\end{NoHyper}
}

\def\makeautorefname#1#2{\csdef{#1autorefname}{#2}}

\makeautorefname{section}{Section}%
\makeautorefname{subsection}{Section}%
\makeautorefname{subsubsection}{Section}%

\begin{document}
\title[Möbius Strip Diagram Algebras]{Möbius Strip Diagram Algebras}
\author[D. W. Collison and D. Tubbenhauer]{Daniel W. Collison and Daniel Tubbenhauer}

\address{D.W.C.: The University of Sydney, School of Mathematics and Statistics F07, NSW 2006, Australia}
\email{daniel.collison@sydney.edu.au}

\address{D.T.: The University of Sydney, School of Mathematics and Statistics F07, Office Carslaw 827, NSW 2006, Australia, \href{http://www.dtubbenhauer.com}{www.dtubbenhauer.com}, https://orcid.org/0000-0001-7265-5047}
\email{daniel.tubbenhauer@sydney.edu.au}

\begin{abstract}
We introduce Möbius strip diagram algebras (and their monoid and categorical versions) as subalgebras of a partition-style diagram calculus in which strands may carry handles and Möbius strip features.
We identify the resulting diagram category with a linear quotient of a nonorientable two-dimensional cobordism category. Finally, we develop the associated cell theory and use it to classify the simple modules and compute dimensions in a range of cases.
\end{abstract}

\nnfootnote{\textit{Mathematics Subject Classification 2020.} Primary: 20M30, 57R56; Secondary: 16G10, 18M05.}
\nnfootnote{\textit{Keywords.} Möbius strip; diagram algebras; partition monoids; unoriented cobordisms; two dimensional topological quantum field theories; sandwich cellular algebras; Gram matrices; simple modules.}

\addtocontents{toc}{\protect\setcounter{tocdepth}{1}}

\maketitle

\tableofcontents


\section{Introduction}\label{S:Introduction}


Diagrammatic algebra is, at heart, the idea that complicated algebraic operations can be made local by drawing them.
Morphisms live in rectangles, composition is stacking, and the tensor product is juxtaposition.
Such a viewpoint has paid for itself many times over: it turns representation-theoretic questions into combinatorics of pictures,
and from topological constructions, it produces algebraic structures for which feasible computations are possible.
Classical examples include the Temperley--Lieb algebra, governing the SL2 story \cite{RuTeWe-sl2},
and the Brauer algebra, which plays the analogous role for the orthogonal and symplectic groups \cite{Br-brauer-algebra-original}.

A particularly robust source of diagram categories uses (1+1)-dimensional (or 2D) cobordisms, especially when passing to their spines, for example:
\begin{gather*}
\begin{tikzpicture}[
baseline ={([yshift=-.5ex]current bounding box.center)},anchorbase]
\pic[
tqft,
incoming boundary components=3,
outgoing boundary components=1,
every lower boundary component/.style={draw},
genus=1,
offset = 1,
draw,
thick,
name=ex1
];
\pic[tqft,
incoming boundary components=1,
outgoing boundary components=0,
draw,
thick,
name=cyldoub1,
every lower boundary component/.style={draw},
at={(5.5,0)},
];
\end{tikzpicture} \quad
\leftrightsquigarrow \quad 
\begin{tikzpicture}[anchorbase]
\draw[usual] (0,0) to (1,1);
\draw[usual] (0,0) to (-0.5,0.5);
\draw[usual] (-0.5,0.5) to (-1,1);
\draw[usual] (-0.5,0.5) to (0,1);
\draw[usual] (0,0) to (0,-0.3);
\draw[usual] (0,-1) to (0,-0.8);
\draw[usual] (0,-0.3) to[out=0,in=0] (0,-0.8);
\draw[usual] (0,-0.3) to[out=180,in=180] (0,-0.8);
\draw[usual,dot] (1.5,1) to (1.5,0.6);
\end{tikzpicture}
.
\end{gather*}
On the one hand, cobordism categories are universal for topological field theories: a 2D TQFT is a symmetric monoidal functor out of a suitable cobordism category.
On the other hand, once local relations are imposed, such as evaluations of closed components, neck-cutting-type relations, handle relations, and so on, as in
\cite{BlHaMaVo-tqft-kauffman-bracket,BaNa-tangles-cobordisms,KhSa-cobordisms}, familiar algebraic structures are commonly recovered:
partition-style categories and their associated Deligne-type interpolation categories, as well as other diagram algebras with rich cellular structure; this is well-known, for newer perspectives see \eg\, \cite{Co-jelly,KhSa-cobordisms,KhOsKo-cobordisms}.

Moreover, many common diagram calculi, while not direct offspring of 2D TQFTs, are driven by the same locality philosophy and are built from strikingly similar
primitives (merges, splits, cups, caps, and their relatives).
Examples include diagrammatic approaches to tensor categories (as summarized e.g. in \cite{TuVi-monoidal-tqft,Tu-qt}),
Soergel diagrams as e.g. in \cite{ElKh-diagrams-soergel}, and webs as e.g. in \cite{Ku-spiders-rank-2} (in particular, for SO3 \cite{Ya-invariant-graphs,MoPeSn-categories-trivalent-vertex,Tu-web-reps}),
as well as representation-theoretic structures related to cellular algebras, see e.g. \cite{AnStTu-cellular-tilting,BeTh-hwt-findim-algebras}.
There are many more examples, far too many to list here; in particular, related diagrammatics also appear in formalisms for embedded graphs/ribbon graphs on surfaces and their categorifications.
A common feature in these graph-on-surface settings is the introduction of half-twists (or when working in genuinely nontrivial surface topology) resulting in nonorientability.

In this paper, we incorporate nonorientability into the 2D TQFT setting in the easiest possible way: by adjoining M\"obius strips,
which we indicate in spine pictures by dots
\begin{gather*}
\begin{tikzpicture}[baseline ={([yshift=-.5ex]current bounding box.center)}, scale=0.25]
\pic[
tqft,
incoming boundary components=1,
outgoing boundary components=1,
every lower boundary component/.style={draw},
genus=0,
scale=0.75,
draw,
thick,
boundary separation=40pt,
name=pop
]; 
\node at ([xshift=0pt, yshift=80pt]pop-outgoing boundary 1) {$\lightning$};
\end{tikzpicture},
\quad
\lightning
=
\raisebox{-0.1cm}{$\begin{tikzpicture}[scale=0.5, line cap=round, line join=round,anchorbase]
\coordinate (L) at (-2,0);
\coordinate (R) at ( 2,0);
\fill[orchid!8]
(L) .. controls (-0.8, 1.4) and (0.8, 1.4) .. (R)
.. controls (0.8,-1.4) and (-0.8,-1.4) .. cycle;
\draw[line width=1.0pt]
(L) .. controls (-0.8, 1.4) and (0.8, 1.4) .. (R);
\draw[line width=1.0pt]
(L) .. controls (-0.8,-1.4) and (0.8,-1.4) .. (R);
\draw[line width=1.0pt, -{Stealth[length=2.6mm]}]
(-0.15,1.05) -- (0.15,1.05);
\draw[line width=1.0pt, -{Stealth[length=2.6mm]}]
(0.15,-1.05) -- (-0.15,-1.05);
\node at (0, 1.55) {$a$};
\node at (0,-1.55) {$a$};
\fill (L) circle (1.3pt);
\fill (R) circle (1.3pt);
\node (B) at (0,-2.15) {};
\end{tikzpicture}$}
=\mathbb{RP}^2 \quad
\leftrightsquigarrow \quad
\begin{tikzpicture}[anchorbase]
\draw[usual, mob=0.5] (0,0) to (0,1);
\end{tikzpicture}
.
\end{gather*}
A new family of diagrammatic objects is then born: nonorientable diagram monoids, which we call \emph{M\"obius strip diagram monoids or algebras or categories}.
They are very much in the spirit of ``decorated'' diagram algebras, akin to blob algebras \cite{MaSa-blob} or dotted Temperley--Lieb algebras as in e.g.
\cite{Gr-dots-tl,DeTu-dicyclic}, but now in a partition-style setting.
Our aim is to show that they form a natural extension of the existing diagrammatic zoo, while still supporting a tractable internal structure and representation theory.

\subsection*{Why nonorientability appears even when you did not ask for it}

There is a recurring phenomenon in diagrammatics and adjacent fields.
To keep the discussion concrete, let us use low dimensional topology, in particular knot theory and topological aspects of graph theory, as guiding examples:
if one insists on locality (relations supported in small disks) and enlarges the ambient class of diagrams (for instance, by allowing knots or graphs on surfaces),
then unoriented and nonorientable cobordisms tend to appear whether planned or not.

A clean instance is the extension of Bar-Natan's geometric formalism \cite{BaNa-tangles-cobordisms} to link diagrams on surfaces: when the surface itself is allowed to vary up to stabilization (adding/removing handles away from the diagram),
the natural cobordisms between resolutions are not always orientable, and the corresponding TQFT input must be upgraded from ``oriented'' to ``unoriented''.
The same pattern emerges in \cite{TuTu-utqft}, where unoriented (1+1)-TQFTs are introduced and classified (in terms of a Frobenius algebra with extra structure)
in order to produce link homology theories for stable equivalence classes; see also \cite{Ta-uhqft,Tu-virtual-khovanov}.
A graph-theoretic instance of this ``unoriented upgrade'' is the TQFT approach to graph coloring as e.g. in \cite{BaMc-graph-coloring}.
More generally, half-twist operations on edges of embedded graphs provide a basic mechanism by which nonorientability enters the topology of ribbon graphs; see e.g. \cite{EMM-twisted-duality}.

From the diagrammatic viewpoint, ``nonorientability'' manifests itself simply by recording when a local piece of surface has undergone a half-twist.
In a planar calculus, half-twists are naturally encoded by considering a \emph{M\"obius strip} as an elementary decoration.
Similarly, stabilization by handles necessitates introducing a \emph{handle} as another basic local feature.
The result is a calculus in which strands may carry handles and M\"obius strips, subject to local relations that model the corresponding cobordism moves.

\begin{Remark}\label{rm:nophi}
For the reader familiar with \cite{TuTu-utqft}: we do not use the twist itself as a gluing operation, which, in our opinion, makes the diagrammatics easier though a small amount of information is lost essentially amounting to a factor $\Z/2\Z$, cf. \autoref{T:Deligne} with \cite[Theorem II]{Cz-mobius-tqft}.
\end{Remark}

\subsection*{Main results (informal)}

We now summarize the contributions of the paper; precise statements appear in the body.

\begin{enumerate}
\item \textbf{Definition and normal forms.}
We begin by defining our main objects, M\"obius strip diagram categories/monoids/algebras, as a partition-style calculus with handle and M\"obius generators and natural local relations.
We give an explicit spanning set and convenient normal forms enabling effective computations.

\item \textbf{Cobordism identification.}
We make precise the topological motivations by constructing a symmetric monoidal functor from a nonorientable 2D cobordism category to our diagram category, which induces an equivalence after imposing the defining relations.
Equivalently, our diagram category can be viewed as a linear quotient of nonorientable cobordisms.

\item \textbf{Cell theory and simple modules.}
We develop the associated cell theory and describe the corresponding cell modules.
We then use the developed theory to classify the simple modules and determine their parametrization.

\item \textbf{Dimensions and examples.}
We compute dimensions for a range of simples/cell modules and work out illustrative low-rank examples, highlighting how the new nonorientable features
modify familiar partition-type answers.
\end{enumerate}

\noindent\textbf{Acknowledgments.}
This paper is part of the first author's PhD thesis.

DC was supported by the Commonwealth through an Australian Government Research Training Program Scholarship [https://doi.org/10.82133/C42F-K220]. DT is supported by the ARC Future Fellowship FT230100489, and any remaining errors are an inherent feature of a one-sided approach.


\section{Möbius strip diagram monoids and categories}\label{M:Monoid}


Below and throughout, we assume some familiarity with the language of monoidal categories and their diagrammatics as e.g. in \cite{EtGeNiOs-tensor-categories} or in \cite{Tu-qt}.

We first consider the \emph{partition monoid} $\pamon$ of all diagrams of partitions of a $2n$-element set, and the respective planar submonoid $\ppamon$ of $\pamon$, 
which arise as endomorphism monoids in the respective monoidal categories $\ppacat$ and $\pacat$. More specifically, the \emph{partition category} $\pacat$ has as objects nonnegative integers and morphisms from $n$ to $m$ are partition diagrams corresponding to partitions of $\{1, \dots, n\}\cup\{1', \dots, m'\}$. The following example summarizes our conventions; see \eg \, \cite{Jo-potts-sym,Ma-potts-sym,HaRa-partition-algebras} for a thorough treatment.

\begin{Example}\label{E:Partitions}
The elements of $\pamon[6]$ represent partitions of $\{1,\dots,6\}\cup\{1',\dots,6'\}$, with the primed numbers corresponding to the number of strands at the top end of the respective partition diagrams. For example:
\begin{gather*}
\big\{
\{1,2'\},\{2,4,5\},\{3,3'\},\{6,1',4',6'\},\{5'\}
\big\}\leftrightsquigarrow
a=
\begin{tikzpicture}[anchorbase]
\draw[usual] (0.5,0) to[out=90,in=180] (1.25,0.45) to[out=0,in=90] (2,0);
\draw[usual] (0.5,0) to[out=90,in=180] (1,0.35) to[out=0,in=90] (1.5,0);
\draw[usual] (0,1) to[out=270,in=180] (0.75,0.55) to[out=0,in=270] (1.5,1);
\draw[usual] (1.5,1) to[out=270,in=180] (2,0.55) to[out=0,in=270] (2.5,1);
\draw[usual] (0,0) to (0.5,1);
\draw[usual] (1,0) to (1,1);
\draw[usual] (2.5,0) to (2.5,1);
\draw[usual,dot] (2,1) to (2,0.8);
\end{tikzpicture}
\in\pamon[6]
,\\
\big\{
\{1,1'\},\{2,4,5\},\{3\},\{6,2',4',6'\},\{3'\},\{5'\}
\big\}\leftrightsquigarrow
b=
\begin{tikzpicture}[anchorbase]
\draw[usual] (0.5,0) to[out=90,in=180] (1.25,0.45) to[out=0,in=90] (2,0);
\draw[usual] (0.5,0) to[out=90,in=180] (1,0.35) to[out=0,in=90] (1.5,0);
\draw[usual] (0.5,1) to[out=270,in=180] (1,0.55) to[out=0,in=270] (1.5,1);
\draw[usual] (1.5,1) to[out=270,in=180] (2,0.55) to[out=0,in=270] (2.5,1);
\draw[usual] (0,0) to (0,1);
\draw[usual] (2.5,0) to (2.5,1);
\draw[usual,dot] (1,0) to (1,0.2);
\draw[usual,dot] (1,1) to (1,0.8);
\draw[usual,dot] (2,1) to (2,0.8);
\end{tikzpicture}
\in\ppamon[6].
\end{gather*}

Composition $\circ$ in $\pacat$ is vertical concatenation of diagrams and removal of internal components. The monoidal product $\otimes$ is horizontal juxtaposition. Our conventions are:
\begin{gather*}
a\circ b
=
\begin{tikzpicture}[anchorbase]
\draw[usual] (0.5,1) to[out=90,in=180] (1.25,1.45) to[out=0,in=90] (2,1);
\draw[usual] (0.5,1) to[out=90,in=180] (1,1.35) to[out=0,in=90] (1.5,1);
\draw[usual] (0,2) to[out=270,in=180] (0.75,1.55) to[out=0,in=270] (1.5,2);
\draw[usual] (1.5,2) to[out=270,in=180] (2,1.55) to[out=0,in=270] (2.5,2);
\draw[usual] (0,1) to (0.5,2);
\draw[usual] (1,1) to (1,2);
\draw[usual] (2.5,1) to (2.5,2);
\draw[usual,dot] (2,2) to (2,1.8);
\draw[usual] (0.5,0) to[out=90,in=180] (1.25,0.45) to[out=0,in=90] (2,0);
\draw[usual] (0.5,0) to[out=90,in=180] (1,0.35) to[out=0,in=90] (1.5,0);
\draw[usual] (0.5,1) to[out=270,in=180] (1,0.55) to[out=0,in=270] (1.5,1);
\draw[usual] (1.5,1) to[out=270,in=180] (2,0.55) to[out=0,in=270] (2.5,1);
\draw[usual] (0,0) to (0,1);
\draw[usual] (2.5,0) to (2.5,1);
\draw[usual,dot] (1,0) to (1,0.2);
\draw[usual,dot] (1,1) to (1,0.8);
\draw[usual,dot] (2,1) to (2,0.8);
\end{tikzpicture}
=
\begin{tikzpicture}[anchorbase]
\draw[usual] (0.5,0) to[out=90,in=180] (1.25,0.45) to[out=0,in=90] (2,0);
\draw[usual] (0.5,0) to[out=90,in=180] (1,0.35) to[out=0,in=90] (1.5,0);
\draw[usual] (0,1) to[out=270,in=180] (0.75,0.55) to[out=0,in=270] (1.5,1);
\draw[usual] (1.5,1) to[out=270,in=180] (2,0.55) to[out=0,in=270] (2.5,1);
\draw[usual] (0,0) to (0.5,1);
\draw[usual] (2.5,0) to (2.5,1);
\draw[usual,dot] (1,0) to (1,0.2);
\draw[usual,dot] (1,1) to (1,0.8);
\draw[usual,dot] (2,1) to (2,0.8);
\end{tikzpicture}
,\quad
a\otimes b=
\begin{tikzpicture}[anchorbase]
\draw[usual] (0.5,0) to[out=90,in=180] (1.25,0.45) to[out=0,in=90] (2,0);
\draw[usual] (0.5,0) to[out=90,in=180] (1,0.35) to[out=0,in=90] (1.5,0);
\draw[usual] (0,1) to[out=270,in=180] (0.75,0.55) to[out=0,in=270] (1.5,1);
\draw[usual] (1.5,1) to[out=270,in=180] (2,0.55) to[out=0,in=270] (2.5,1);
\draw[usual] (0,0) to (0.5,1);
\draw[usual] (1,0) to (1,1);
\draw[usual] (2.5,0) to (2.5,1);
\draw[usual,dot] (2,1) to (2,0.8);
\draw[usual] (3.5,0) to[out=90,in=180] (4.25,0.45) to[out=0,in=90] (5,0);
\draw[usual] (3.5,0) to[out=90,in=180] (4,0.35) to[out=0,in=90] (4.5,0);
\draw[usual] (3.5,1) to[out=270,in=180] (4,0.55) to[out=0,in=270] (4.5,1);
\draw[usual] (4.5,1) to[out=270,in=180] (5,0.55) to[out=0,in=270] (5.5,1);
\draw[usual] (3,0) to (3,1);
\draw[usual] (5.5,0) to (5.5,1);
\draw[usual,dot] (4,0) to (4,0.2);
\draw[usual,dot] (4,1) to (4,0.8);
\draw[usual,dot] (5,1) to (5,0.8);
\end{tikzpicture}
,
\end{gather*}
where we simplified the resulting diagram on the left by noting that partition diagrams are considered equivalent if and only if they define the same partition. 

As another example, the composition of diagrams
\begin{gather*}
\Hom_{\pacat}(2, 3) \ni
\begin{tikzpicture}[anchorbase]
\draw[usual, dot] (1,0) to (1,0.2);
\draw[usual] (0,1) to (0.5,0);
\draw[usual] (0, 1) to[out=315, in=215] (0.5,1);
\draw[usual, dot] (1, 1) to (1,0.8);
\end{tikzpicture}
\quad \circ 
\quad
\begin{tikzpicture}[anchorbase]
\draw[usual] (0,0) to (0.5, 1);
\draw[usual, dot] (1, 1) to (1, 0.8);
\draw[usual] (0.5, 0) to[out=90, in=90] (1.5, 0);
\draw[usual, dot] (1, 0) to (1, 0.2);
\end{tikzpicture}
\in \Hom_{\pacat}(4, 2),
\end{gather*}
corresponding to $\big\{\{1, 1', 2'\}, \{2\}, \{3'\}\big\}$ and $\big\{\{1, 1'\}, \{2, 4\}, \{3\}, \{2'\}\big\}$ is given by 
\begin{gather*}
\Hom_{\pacat}(4, 3) \ni
\begin{tikzpicture}[anchorbase]
\draw[usual] (0,0) to (0.5, 1);
\draw[usual, dot] (1, 1) to (1, 0.8);
\draw[usual] (0.5, 0) to[out=90, in=90] (1.5, 0);
\draw[usual, dot] (1, 0) to (1, 0.2);
\draw[usual, dot] (1,1) to (1,1.2);
\draw[usual] (0,2) to (0.5,1);
\draw[usual] (0, 2) to[out=315, in=215] (0.5,2);
\draw[usual, dot] (1, 2) to (1,1.8);
\draw[very thick,densely dotted,tomato] (0,1) to (1.5, 1);
\end{tikzpicture}
=
\begin{tikzpicture}[anchorbase]
\draw[usual] (0,0) to (0.5, 1);
\draw[usual] (0.5, 0) to[out=90, in=90] (1.5, 0);
\draw[usual, dot] (1, 0) to (1, 0.2);
\draw[usual] (0.5, 1) to[out=300, in=230] (1,1);
\draw[usual, dot] (1.5, 1) to (1.5,0.8);
\end{tikzpicture}.
\end{gather*}
The reader familiar with linear versions of the partition category might want to multiply by a scalar when removing internal components; we allow a generalization of this type below.
\end{Example}

A set of $\circ$-$\otimes$-generators for $\pacat$ is given by (with names, in order: identity, multiplication, unit, comultiplication, counit, crossing):
\begin{gather}\label{Eq:MoGen}
1_1=\text{id}_1 \colon
\begin{tikzpicture}[anchorbase,scale=0.55]
\draw[usual] (0,1) to[out=270,in=90] (0,0);
\end{tikzpicture}
\,,\quad
\mu \colon
\begin{tikzpicture}[anchorbase,scale=0.55]
\draw[usual] (0,1) to[out=270,in=90] (0,0);
\draw[usual] (0,1) to[out=270,in=90] (1,0);
\end{tikzpicture}
\,,\quad
\eta \colon
\begin{tikzpicture}[anchorbase,scale=0.55]
\draw[white] (0,0) to[out=90,in=270] (0,1);
\draw[usual,dot] (0,1) to (0,0.7);
\end{tikzpicture}
\,,\quad
\Delta \colon
\begin{tikzpicture}[anchorbase,scale=0.55]
\draw[usual] (0,0) to[out=90,in=270] (0,1);
\draw[usual] (0,0) to[out=90,in=270] (1,1);
\end{tikzpicture}
,\quad
\epsilon \colon
\begin{tikzpicture}[anchorbase,scale=0.55]
\draw[white] (0,0) to[out=90,in=270] (0,1);
\draw[usual,dot] (0,0) to (0,0.3);
\end{tikzpicture}
,\quad
s \colon
\begin{tikzpicture}[anchorbase,scale=0.55]
\draw[usual] (0,0) to[out=90,in=270] (1,1);
\draw[usual] (1,0) to[out=90,in=270] (0,1);
\end{tikzpicture}
\,.
\end{gather}

\begin{Notation}
Define $1_k=\text{id}_k=\otimes^k1_1$, the identity morphism in $\End_{\pacat}(k)$.
\end{Notation}

Using
\begin{gather*}
\text{cup}\colon
\begin{tikzpicture}[anchorbase,scale=0.55]
\draw[usual] (0,0) to[out=90,in=180] (0.5,0.5) to[out=0,in=90] (1,0);
\end{tikzpicture}
=
\begin{tikzpicture}[anchorbase,scale=0.55]
\draw[usual] (0,1) to[out=270,in=90] (0,0);
\draw[usual] (0,1) to[out=270,in=90] (1,0);
\draw[usual,dot] (0,1) to (0,1.3);
\end{tikzpicture}
,\quad
\text{cap}\colon
\begin{tikzpicture}[anchorbase,scale=0.55,yscale=-1]
\draw[usual] (0,0) to[out=90,in=180] (0.5,0.5) to[out=0,in=90] (1,0);
\end{tikzpicture}
=
\begin{tikzpicture}[anchorbase,scale=0.55,yscale=-1]
\draw[usual] (0,1) to[out=270,in=90] (0,0);
\draw[usual] (0,1) to[out=270,in=90] (1,0);
\draw[usual,dot] (0,1) to (0,1.3);
\end{tikzpicture}
,
\end{gather*}
it is easy to see that $\pacat$ and $\ppacat$ are pivotal (with the cup and cap as structures). Moreover, $\pacat$ is symmetric using the crossing $s$.

\begin{Notation}
For all diagram categories throughout that allow cup and cap, we will always use the pivotal structure they define. Ditto, when they allow the crossing, then they are symmetric with the crossing structure, and we will always use this symmetry.
\end{Notation}

\begin{Example}
Here is an example of how to decompose a partition diagram into a $\circ$-$\otimes$-product of generators:
\begin{gather*}
\pamon[3] \ni \quad
\begin{tikzpicture}[anchorbase]
\draw[usual] (0, 0.8) to[out=90, in=270] (1, 2);
\draw[usual] (0, 0) to[out=90, in=270] (0, 0.8);
\draw[usual] (0, 0.8) to[out=270, in=90] (1, 0);
\draw[usual] (2, 0) to[out=90, in=270] (0, 2);
\draw[usual,dot] (2,2) to (2,1.8);
\draw[very thick,densely dotted,tomato] (0,1.6) to (2,1.6);
\draw[very thick,densely dotted,tomato] (0,0.9) to (2,0.9);
\end{tikzpicture}
\quad =\quad
(1_2 \otimes \eta) \circ s  \circ (\mu \otimes 1_1).
\end{gather*}
As we will see later, the slices correspond to parts of a \textit{sandwich structure}.
\end{Example}

The generators are subject to the following relations that we recommend drawing:
\begin{align}
s^2 &= 1_2, \label{startrel}\\
(1_1\otimes s) \circ (s \otimes 1_1) \circ (1_1 \otimes s) & = (s \otimes 1_1) \circ (1_1 \otimes s) \circ (s \otimes 1_1), \label{symm}\\
s \circ (1_1 \otimes \eta) &= \eta \otimes 1_1, \label{hol1}\\
(1_1 \otimes \mu) \circ (s \otimes 1_1)\circ (1_1 \otimes s) & = s \circ (\mu \otimes 1_1), \label{hol2} \\
(1_1 \otimes \epsilon) \circ s & = \epsilon \otimes 1_1, \label{hol3}\\
(1_1 \otimes s) \circ (s \otimes 1_1) \circ (1_1 \otimes \Delta) &= (\Delta \otimes 1_1) \circ s, \label{hol4}\\
\mu \circ (1_1 \otimes \eta) &= 1_1 = \mu \circ (\eta \otimes 1_1), \\
(1_1 \otimes \epsilon) \circ \Delta &= 1_1 = (\epsilon \otimes 1_1) \circ \Delta, \\
(\mu \otimes 1_1) \circ (1_1 \otimes \Delta) &= \Delta \circ \mu = (1_1 \otimes \mu) \circ (\Delta \otimes 1_1), \label{frob}\\
\mu \circ s &= \mu,  \label{endrel} \\
\mu \circ \Delta &= 1_1. \label{holes}
\end{align}
The generators from \autoref{Eq:MoGen} and the relations \autoref{startrel}--\autoref{holes} give a generator-relation presentation of $\pacat$, cf. \cite[Theorem 2.1]{Co-jelly}; excluding $s$ one gets a generator-relation presentation of $\ppacat$.

Together with $\ppacat$, we also consider the following important submonoids and categories; the list is taken verbatim from \cite{KhSiTu-monoidal-cryptography}.
\begin{enumerate}[label=$\bullet$]

\item The \emph{rook-Brauer monoid} $\robrmon$ consisting of all diagrams with components of size $1$ or $2$. The planar rook-Brauer monoid 
$\probrmon=\momon$ is also called \emph{Motzkin monoid}.
\begin{gather*}
\begin{tikzpicture}[anchorbase]
\draw[usual] (1,0) to[out=90,in=180] (1.25,0.25) to[out=0,in=90] (1.5,0);
\draw[usual] (1,1) to[out=270,in=180] (1.75,0.55) to[out=0,in=270] (2.5,1);
\draw[usual] (0,0) to (0.5,1);
\draw[usual] (2.5,0) to (2,1);
\draw[usual,dot] (0.5,0) to (0.5,0.2);
\draw[usual,dot] (2,0) to (2,0.2);
\draw[usual,dot] (0,1) to (0,0.8);
\draw[usual,dot] (1.5,1) to (1.5,0.8);
\end{tikzpicture}
\in\robrmon[6]
,\quad
\begin{tikzpicture}[anchorbase]
\draw[usual] (0.5,0) to[out=90,in=180] (1.25,0.5) to[out=0,in=90] (2,0);
\draw[usual] (1,0) to[out=90,in=180] (1.25,0.25) to[out=0,in=90] (1.5,0);
\draw[usual] (2,1) to[out=270,in=180] (2.25,0.75) to[out=0,in=270] (2.5,1);
\draw[usual] (0,0) to (1,1);
\draw[usual,dot] (2.5,0) to (2.5,0.2);
\draw[usual,dot] (0,1) to (0,0.8);
\draw[usual,dot] (0.5,1) to (0.5,0.8);
\draw[usual,dot] (1.5,1) to (1.5,0.8);
\end{tikzpicture}
\in\momon[6]
.
\end{gather*}

\item The \emph{Brauer monoid} $\brmon$ consisting of all diagrams with components of size $2$. The planar Brauer monoid $\pbrmon=\tlmon$ is known as the \emph{Temperley--Lieb monoid} (sometimes $\tlmon$ is called \emph{Jones monoid} or \emph{Kauffman monoid}).
\begin{gather*}
\begin{tikzpicture}[anchorbase]
\draw[usual] (0.5,0) to[out=90,in=180] (1.25,0.45) to[out=0,in=90] (2,0);
\draw[usual] (1,0) to[out=90,in=180] (1.25,0.25) to[out=0,in=90] (1.5,0);
\draw[usual] (0,1) to[out=270,in=180] (0.75,0.55) to[out=0,in=270] (1.5,1);
\draw[usual] (1,1) to[out=270,in=180] (1.75,0.55) to[out=0,in=270] (2.5,1);
\draw[usual] (0,0) to (0.5,1);
\draw[usual] (2.5,0) to (2,1);
\end{tikzpicture}
\in\brmon[6]
,\quad
\begin{tikzpicture}[anchorbase]
\draw[usual] (0.5,0) to[out=90,in=180] (1.25,0.5) to[out=0,in=90] (2,0);
\draw[usual] (1,0) to[out=90,in=180] (1.25,0.25) to[out=0,in=90] (1.5,0);
\draw[usual] (0,1) to[out=270,in=180] (0.25,0.75) to[out=0,in=270] (0.5,1);
\draw[usual] (2,1) to[out=270,in=180] (2.25,0.75) to[out=0,in=270] (2.5,1);
\draw[usual] (0,0) to (1,1);
\draw[usual] (2.5,0) to (1.5,1);
\end{tikzpicture}
\in\tlmon[6]
.
\end{gather*}

\item The \emph{rook monoid} or 
\emph{symmetric inverse semigroup} $\romon$ consisting of all diagrams with components of size $1$ or $2$, and all partitions have at most one component 
at the bottom and at most one at the top. The \emph{planar rook monoid}
$\promon$ is the corresponding planar submonoid.
\begin{gather*}
\begin{tikzpicture}[anchorbase]
\draw[usual] (0,0) to (1,1);
\draw[usual] (0.5,0) to (0,1);
\draw[usual] (2,0) to (2,1);
\draw[usual] (2.5,0) to (0.5,1);
\draw[usual,dot] (1,0) to (1,0.2);
\draw[usual,dot] (1.5,0) to (1.5,0.2);
\draw[usual,dot] (1.5,1) to (1.5,0.8);
\draw[usual,dot] (2.5,1) to (2.5,0.8);
\end{tikzpicture}
\in\romon[6]
,\quad
\begin{tikzpicture}[anchorbase]
\draw[usual] (0,0) to (0.5,1);
\draw[usual] (0.5,0) to (1,1);
\draw[usual] (2,0) to (1.5,1);
\draw[usual] (2.5,0) to (2.5,1);
\draw[usual,dot] (1,0) to (1,0.2);
\draw[usual,dot] (1.5,0) to (1.5,0.2);
\draw[usual,dot] (0,1) to (0,0.8);
\draw[usual,dot] (2,1) to (2,0.8);
\end{tikzpicture}
\in\promon[6]
.
\end{gather*}

\item The \emph{symmetric group} $\sym$ consisting of all matchings with components of size $1$. The \emph{planar symmetric group} is trivial $\psym\cong\onemon$.
\begin{gather*}
\begin{tikzpicture}[anchorbase]
\draw[usual] (0,0) to (1,1);
\draw[usual] (0.5,0) to (0,1);
\draw[usual] (1,0) to (1.5,1);
\draw[usual] (1.5,0) to (2.5,1);
\draw[usual] (2,0) to (2,1);
\draw[usual] (2.5,0) to (0.5,1);
\end{tikzpicture}
\in\sym[6]
,\quad
\begin{tikzpicture}[anchorbase]
\draw[usual] (0,0) to (0,1);
\draw[usual] (0.5,0) to (0.5,1);
\draw[usual] (1,0) to (1,1);
\draw[usual] (1.5,0) to (1.5,1);
\draw[usual] (2,0) to (2,1);
\draw[usual] (2.5,0) to (2.5,1);
\end{tikzpicture}
\in\psym[6]
.
\end{gather*}

\end{enumerate}
There are also the respective categories $\robrcat[1]$, $\mocat[1]$, 
$\brcat[1]$, $\tlcat[1]$ etc.

\begin{Notation}\label{N:DAlgebrasTMonTCellsTwo}
Below we write $\xmon{1}$ or $\xcat[1]$ for any of the monoids/categories listed above, seen as submonoids/subcategories of the partition category. In fact, one could also take any submonoid of $\pacat$ (e.g. the ones in \cite{KaMaYu-extension-tl}).
\end{Notation}

Let us now modify $\xcat$ by ignoring the final relation 
\autoref{holes} and set 
\begin{gather*}
h\colon \begin{tikzpicture}[anchorbase]
\draw[usual, hol=0.5] (0,0) to (0,1);
\end{tikzpicture}
=
\begin{tikzpicture}[anchorbase]
\draw[usual] (0,0) to[out=90, in=270] (0,0.5);
\draw[usual] (0,0) to[out=90, in = 270] (0.5,0.5);
\draw[usual] (0,0.5) to (0,1);
\draw[usual] (0.5,0.5) to[out= 90, in=270] (0,1);
\end{tikzpicture}
= \mu \circ \Delta.
\end{gather*}
Furthermore, introduce an additional generator
\begin{gather}\label{Eq:MDot}
m \colon \begin{tikzpicture}[anchorbase]
\draw[usual, mob=0.5] (0,0) to (0,1);
\end{tikzpicture}
,
\end{gather}
satisfying the relations
\begin{align}
\mu \circ (m \otimes 1_1 ) &= m \circ \mu = \mu \circ (1_1 \otimes m)  \label{mobmu} \\
s \circ (m \otimes 1_1) & =  (1_1 \otimes m) \circ s \label{mobs} \\
m^3 &=h \circ m. \label{mobrel}
\end{align}
The morphisms $h$ and $m$ are the \emph{handle dot} (sometimes just called \emph{hole}) and \emph{M{\"o}bius dot}.

\begin{Example}\label{exmp1}
The two relations \autoref{mobmu} and \autoref{mobs} in diagrammatic form are
\begin{gather*}
\begin{tikzpicture}[anchorbase]
\draw[usual,mob=0.25] (0,0) to (0.5,0.5) to (1,0);
\draw[usual] (0.5,0.5) to (0.5,1);
\end{tikzpicture}
=
\begin{tikzpicture}[anchorbase]
\draw[usual] (0,0) to (0.5,0.5) to (1,0);
\draw[usual,mob=0.5] (0.5,0.5) to (0.5,1);
\end{tikzpicture}
=
\begin{tikzpicture}[anchorbase]
\draw[usual,mob=0.75] (0,0) to (0.5,0.5) to (1,0);
\draw[usual] (0.5,0.5) to (0.5,1);
\end{tikzpicture}
\hspace{0.5cm} \text{and} \hspace{0.5cm}
\begin{tikzpicture}[anchorbase]
\draw[usual,mob=0.25] (0,0) to (1,1);
\draw[usual] (1,0) to (0,1);
\end{tikzpicture}
=
\begin{tikzpicture}[anchorbase]
\draw[usual,mob=0.75] (0,0) to (1,1);
\draw[usual] (1,0) to (0,1);
\end{tikzpicture},
\end{gather*}
while the final relation \autoref{mobrel} is
\begin{gather*}
\begin{tikzpicture}[anchorbase]
\draw[usual, mob=0.25, mob=0.5, mob=0.75] (0,0) to (0,1);
\end{tikzpicture}
=
\begin{tikzpicture}[anchorbase]
\draw[usual, hol=0.66, mob=0.33] (0,0) to (0,1);
\end{tikzpicture}
.
\end{gather*}
We will see later that is somewhat awkward looking relation is a famous topological relation, which we will call the \emph{M{\"o}bius relation}.
\end{Example}

\begin{Notation}\label{N:MDAlgebrasTMonTCellsTwo}
Let $\mxcat$ denote the modified $\xcat$ as above; ditto for monoids.
\end{Notation}

\begin{Example}
In other words, elements of $\mxmon{1}$ are diagrams in $\xmon{1}$ with parts decorated by $h$ and $m$, representing white and red circles respectively, which commute with each other, move freely along the strands, and are subject to \autoref{mobrel}. For example,
\begin{gather*}
\begin{tikzpicture}[anchorbase]
\draw[usual] (1,0) to[out=90,in=180] (1.25,0.25) to[out=0,in=90] (1.5,0);
\draw[usual, mob=0.5] (1,1) to[out=270,in=180] (1.75,0.55) to[out=0,in=270] (2.5,1);
\draw[usual, mob=0.25, mob=0.5, hol=0.75] (0,0) to (0.5,1);
\draw[usual] (2.5,0) to (2,1);
\draw[usual,dot] (0.5,0) to (0.5,0.4);
\draw[usual,dot] (2,0) to (2,0.4);
\draw[usual, mob=0.5] (2,0) to (2,0.4);
\draw[usual,dot] (0,1) to (0,0.8);
\draw[usual,dot] (1.5,1) to (1.5,0.8);
\draw[usual, hol=0.5] (0.5,0) to (0.5,0.4);
\end{tikzpicture}
\in\mrobrmon[6]{1}
,\hspace{0.5cm}
\begin{tikzpicture}[anchorbase]
\draw[usual, hol=0.25, hol=0.5, hol=0.75] (0.5,0) to[out=90,in=180] (1.25,0.5) to[out=0,in=90] (2,0);
\draw[usual] (1,0) to[out=90,in=180] (1.25,0.25) to[out=0,in=90] (1.5,0);
\draw[usual, mob=0.5] (0,1) to[out=270,in=180] (0.25,0.75) to[out=0,in=270] (0.5,1);
\draw[usual] (2,1) to[out=270,in=180] (2.25,0.75) to[out=0,in=270] (2.5,1);
\draw[usual, mob=0.33, mob=0.66] (0,0) to (1,1);
\draw[usual,  mob=0.25, hol=0.5, hol=0.75] (2.5,0) to (1.5,1);
\end{tikzpicture}
\in\mtlmon[6]{1}
.
\end{gather*}
Note that every connected component can only contain zero, one, or two M{\"o}bius dots.
\end{Example}

From now on, fix a ground field $\K$.

\begin{Definition}\label{D:DAlgebrasTheAlgebras}
Fix the following rational functions:
\begin{align*}
Z_{\paraa} = \frac{p_\paraa (T)}{q(T)} = \sum_{k \geq 0} \paraa_kT^k, \hspace{0.5cm} Z_{\parab} = \frac{p_\parab (T)}{q(T)} = \sum_{k \geq 0} \parab_kT^k, \hspace{0.5cm} Z_{\parac} = \frac{p_\parac (T)}{q(T)} = \sum_{k \geq 0} \parac_kT^k,
\end{align*}
where $p_\paraa(T), p_\parab(T), p_\parac(T), q(T) \in \K[T]$ with $q(0)=1$ satisfying
\begin{align}
\deg\big(p_\parab(T)\big), \deg\big(p_\parac(T)\big) < K = \max(N+1, M), \label{conds}
\end{align}
where 
\begin{align*}
N = \deg\big(p_\paraa(T)\big) \hspace{0.5cm} \text{and} \hspace{0.5cm} M = \deg\big(q(T)\big).
\end{align*}
We define $\dashmxcat[{\paraa, \parab, \parac}]$ to be the $\K$-linear category with hom-space basis the diagrams from $\mxcat$ and multiplication of basis elements given by the diagrammatic $\circ$ or $\otimes$ concatenation, except that all closed components are evaluated according to:
\begin{gather}
\begin{tikzpicture}[anchorbase]
\draw[usual, dot] (0,0) to (0,1);
\draw[usual, dot] (0,1) to (0,0);
\end{tikzpicture}
\quad = \quad
\paraa_0, \hspace{0.75cm} \begin{tikzpicture}[anchorbase]
\draw[usual, dot] (0,0) to (0,1);
\draw[usual, dot] (0,1) to (0,0);
\draw[usual, hol=0.5] (0,1) to (0,0);
\end{tikzpicture}
\quad = \quad
\paraa_1, \hspace{0.75cm} \begin{tikzpicture}[anchorbase]
\draw[usual, dot] (0,0) to (0,1);
\draw[usual, dot] (0,1) to (0,0);
\draw[usual, hol=0.33, hol=0.66] (0,1) to (0,0);
\end{tikzpicture}
\quad = \quad
\paraa_2 \label{series1}, \hspace{0.75cm} \ldots\,, \\
\begin{tikzpicture}[anchorbase]
\draw[usual, dot] (0,0) to (0,1);
\draw[usual, dot] (0,1) to (0,0);
\draw[usual, mob=0.5] (0,0) to (0,1);
\end{tikzpicture}
\quad = \quad
\parab_0, \hspace{0.75cm} \begin{tikzpicture}[anchorbase]
\draw[usual, dot] (0,0) to (0,1);
\draw[usual, dot] (0,1) to (0,0);
\draw[usual, mob=0.33, hol=0.66] (0,0) to (0,1);
\end{tikzpicture}
\quad = \quad
\parab_1, \hspace{0.75cm} \begin{tikzpicture}[anchorbase]
\draw[usual, dot] (0,0) to (0,1);
\draw[usual, dot] (0,1) to (0,0);
\draw[usual, mob =0.25, hol=0.5, hol=0.75] (0,0) to (0,1);
\end{tikzpicture}
\quad = \quad
\parab_2, \hspace{0.75cm} \ldots\,, \label{series2} \\
\begin{tikzpicture}[anchorbase]
\draw[usual, dot] (0,0) to (0,1);
\draw[usual, dot] (0,1) to (0,0);
\draw[usual, mob=0.33, mob=0.66] (0,0) to (0,1);
\end{tikzpicture}
\quad = \quad
\parac_0, \hspace{0.75cm} \begin{tikzpicture}[anchorbase]
\draw[usual, dot] (0,0) to (0,1);
\draw[usual, dot] (0,1) to (0,0);
\draw[usual, mob=0.25, mob=0.5, hol=0.75] (0,0) to (0,1);
\end{tikzpicture}
\quad = \quad
\parac_1, \hspace{0.75cm} \begin{tikzpicture}[anchorbase]
\draw[usual, dot] (0,0) to (0,1);
\draw[usual, dot] (0,1) to (0,0);
\draw[usual, mob =0.2, mob=0.4, hol=0.6, hol=0.8] (0,0) to (0,1);
\end{tikzpicture}
\quad = \quad
\parac_2, \hspace{0.75cm} \ldots\,. \label{series3}
\end{gather}
These are the \emph{evaluation rules}.
\end{Definition}

\begin{Remark}
By the relation \autoref{mobrel}, closed components (and indeed any individual parts) containing more than three instances of the generator $m$ can be reduced to two or less. Hence, \autoref{series1}, \autoref{series2}, and \autoref{series3} together represent all possible closed components.
\end{Remark}

Finally, using the notation introduced in \autoref{D:DAlgebrasTheAlgebras}, given 
\begin{align*}
q(T) = 1 - a_1 T +a_2T^2+ \ldots + (-1)^Ma_MT^M,
\end{align*}
we define the \emph{handle relation}
\begin{align}\label{handle}
\sigma = h^K + \sum_{i=1}^M (-1)^ia_ih^{K-i}=0,
\end{align}
which is 
\begin{gather*}
\begin{tikzpicture}[anchorbase]
\draw[usual,hol=0.5] (0,0) to (0,0.5)node[left]{$K$} to (0,1);
\end{tikzpicture}
-a_1\cdot 
\begin{tikzpicture}[anchorbase]
\draw[usual,hol=0.5] (0,0) to (0,0.5)node[left]{$(K-1)$} to (0,1);
\end{tikzpicture}
+a_2\cdot 
\begin{tikzpicture}[anchorbase]
\draw[usual,hol=0.5] (0,0) to (0,0.5)node[left]{$(K-2)$} to (0,1);
\end{tikzpicture}
-\dots+(-1)^{\deg q}a_{\deg q}\cdot
\begin{tikzpicture}[anchorbase]
\draw[usual,hol=0.5] (0,0) to (0,0.5)node[left]{$(K-\deg q)$} to (0,1);
\end{tikzpicture}
=0
\end{gather*}
in diagrammatic form (the label next to the handle dots represent their number).

\begin{Example}\label{E:RootOfUnity}
For odd $K$, the ``root of unity relation''
\begin{gather*}
\begin{tikzpicture}[anchorbase]
\draw[usual,hol=0.5] (0,0) to (0,0.5)node[left]{$K$} to (0,1);
\end{tikzpicture}
=
\begin{tikzpicture}[anchorbase]
\draw[usual] (0,0) to (0,0.5) to (0,1);
\end{tikzpicture}
,
\end{gather*}
comes from the choice $p_\paraa(T)=1$ and $q(T)=1-T^K$.
\end{Example}

\begin{Definition}\label{mobalgdef}
The \emph{Möbius strip diagram category} $\mxcat[{\paraa, \parab, \parac}]$ is defined as the quotient $\dashmxcat[{\paraa, \parab, \parac}] / \, \In(\sigma)$ where $\In(\sigma)$ is the $\circ$-$\otimes$-ideal generated by $\sigma$.

In the same way, we get \emph{Möbius strip diagram algebras} $\mxmon{\paraa, \parab, \parac}$.
Whenever all involved parameters are zero or one we can, and will, use $\mxmon{\paraa, \parab, \parac}$ as a monoid after adjoining a formal zero element.
\end{Definition}

At this stage we cannot say anything about the Möbius strip diagram category, e.g. we do not know whether the composition is associative (it is, but proving this from the definition is not immediate); we address these issues in \autoref{M:Mobius}.

\begin{Remark}
We will see more clearly in the next section that the quotient in \autoref{mobalgdef} limits the number of occurrences of $h$ on any individual part to $K-1$ \ie \, the Möbius strip diagram algebras are finite dimensional.  For monoid parameters, we get examples of families of finite monoids. Forming tensor products of their representations, one could then e.g. explore analogs of \cite{CoOsTu-growth,HeTu-monoid-growth}.
\end{Remark} 

Once established, it is easy to see that the categories $\mxcat[\paraa, \parab, \parac]$ are monoidal, symmetric whenever the crossing is present, and pivotal whenever the cup and cap are present.


\section{Möbius strip TQFT}\label{M:Mobius}


We would now like to provide some motivation for the construction of the Möbius strip diagram algebras, and in particular, the defining relation \autoref{mobrel}. As the name suggests, we will need to make contact with topology. This section is motivated by \cite{TuTu-utqft,Tu-virtual-khovanov,Cz-mobius-tqft}, and some familiarity with 2D TQFT as in e.g. \cite{Ko-tqfts} is useful.

We first consider the $\K$-linear monoidal partition category $\pacat[\para]$, whose objects and morphisms are given by those of $\pacat$, except that we are allowed to take $\K$-linear combinations of partition diagrams and when composing diagrams, closed components are evaluated to some fixed $\para\in\K$. In fact, and similar as before, $\pacat[\para]$ is symmetric and pivotal. Furthermore, forming the additive idempotent completion (additive closure followed by the Karoubi envelope) of $\pacat[\para]$ produces (the non-abelian version of) Deligne's category $\delcat{\para}$, see e.g. \cite{De-cat-st,CoOs-blocks-deligne-sym-group}.

\begin{Remark}
The monoids $\xmon{1}\subset \End_{\pacat[\para]}(n)$ and the associated diagram algebras $\xmon{\para}$ shown in \eg \,\cite{Tu-sandwich-cellular} are subalgebras of $\End_{\pacat[\para]}(n)$.
\end{Remark}

\begin{Remark}
A set of generators for $\pacat[\para]$ are given by $\mu, \eta, \Delta, \epsilon, s$ from the previous section, subject to the same relations together with $\epsilon \circ \eta = \para$.
\end{Remark}

\begin{Remark}
Below we treat cobordisms as abstract manifolds \ie \, they are not embedded anywhere. 
(Although it would be interesting to consider an embedded analogue of the constructions below, cf. \cite{BeWa-eqft}, we do not pursue this here.)
We will nevertheless use the usual pictures for illustrative purposes. We also silently strictify all categories, which is not a problem for our purposes by the usual coherence theorems.
\end{Remark}

An important related category is the category of \emph{oriented 2D cobordisms} $\cob$. The objects of $\cob$ are nonnegative integers and morphisms from $n$ to $m$ are (diffeomorphism classes of) cobordisms $\coprod_{i=1}^n \, \mathbb{S}^1 \,
\begin{tikzpicture}[baseline=-.75ex]
\node [single arrow, draw, minimum height=1.2em, 
single arrow head extend=0.5ex,inner sep=0.3ex] {};
\end{tikzpicture}\
\coprod_{i=1}^m \, \mathbb{S}^1$.

\begin{Example}\label{examporien}
Here is an example:
\begin{gather*}
\begin{tikzpicture}[
baseline ={([yshift=-.5ex]current bounding box.center)}]
\pic[
tqft,
incoming boundary components=3,
outgoing boundary components=1,
every lower boundary component/.style={draw},
genus=2,
offset = 1,
draw,
thick,
name=ex1
];
\pic[tqft,
incoming boundary components=1,
outgoing boundary components=0,
draw,
thick,
name=cyldoub1,
every lower boundary component/.style={draw},
at={(5.5,0)},
];
\node at ([yshift=-8pt]ex1-outgoing boundary 1) {$\Big\uparrow$};
\node at ([yshift=8pt]ex1-incoming boundary 2) {$\Big\uparrow$};
\node at ([yshift=8pt]ex1-incoming boundary 3)
{$\Big\uparrow$};
\node at ([yshift=8pt]ex1-incoming boundary 1) {$\Big\uparrow$};
\node at ([yshift=8pt]cyldoub1-incoming boundary 1) {$\Big\uparrow$};
\end{tikzpicture} \quad \in \quad \Hom_{\cob}(1, 4).
\end{gather*}
The orientations for the boundary $\mathbb{S}^1$ are indicated by arrows; this convention is used throughout, and so the arrows are omitted in future. We also consider individual cobordisms as representatives for their corresponding diffeomorphism class. 
\end{Example}

It is a well-known fact (see \eg \, \cite{Ko-tqfts}) that $\cob$ forms a monoidal category under disjoint union, which is symmetric and pivotal (crossing, and cup and cap), and has the following generators (as before and in order: identity, multiplication, unit, comultiplication, counit, crossing):
\begin{gather}\label{Eq:Gen}
1_1 =\text{id}_1 \colon \begin{tikzpicture}[baseline ={([yshift=-.5ex]current bounding box.center)}, scale=0.25]
\pic[
tqft,
incoming boundary components=1,
outgoing boundary components=1,
every lower boundary component/.style={draw},
genus=0,
offset=0,
scale=0.75,
draw,
thick,
boundary separation=40pt,
name=pop
]; 
\end{tikzpicture}, \quad
\mu\colon \begin{tikzpicture}[baseline ={([yshift=-.5ex]current bounding box.center)}, scale=0.25]
\pic[
tqft,
incoming boundary components=1,
outgoing boundary components=2,
every lower boundary component/.style={draw},
genus=0,
offset=-0.5,
scale=0.75,
draw,
thick,
boundary separation=40pt,
name=pop
]; 
\end{tikzpicture}, \quad
\eta\colon \begin{tikzpicture}[baseline ={([yshift=-.5ex]current bounding box.center)}, scale=0.25]
\pic[
tqft,
incoming boundary components=1,
outgoing boundary components=0,
every lower boundary component/.style={draw},
genus=0,
scale=0.75,
draw,
thick,
name=cup
]; 
\end{tikzpicture}, \quad
\Delta\colon   \begin{tikzpicture}[baseline ={([yshift=-.5ex]current bounding box.center)}, scale=0.25]
\pic[
tqft,
incoming boundary components=2,
outgoing boundary components=1,
every lower boundary component/.style={draw},
genus=0,
offset=0.5,
scale = 0.75,
draw,
thick,
boundary separation=40pt,
name=pop
]; 
\end{tikzpicture}, \quad
\epsilon\colon \begin{tikzpicture}[baseline ={([yshift=-.5ex]current bounding box.center)}, scale=0.25]
\pic[
tqft,
incoming boundary components=0,
outgoing boundary components=1,
every lower boundary component/.style={draw},
genus=0,
scale = 0.75,
draw,
thick,
name=cup
]; 
\end{tikzpicture}, \quad
s\colon  \begin{tikzpicture}[baseline ={([yshift=-.5ex]current bounding box.center)}, scale=0.25,
tqft/.cd,
cobordism/.style={draw},
every upper boundary component/.style={draw},
every lower boundary component/.style={draw},
]
\pic [tqft/cylinder to next,anchor=incoming boundary 1,name=c, boundary separation=75pt, scale=0.75, draw, thick];
\pic [tqft/cylinder to prior,anchor=incoming boundary 1,
at=(c-outgoing boundary |- c-incoming boundary), boundary separation=75pt, scale=0.75 , draw, thick];
\end{tikzpicture},
\end{gather}
subject to the relations \autoref{startrel}--\autoref{endrel}. If $\K\cob$ denotes the $\K$-linearisation of $\cob$, then it follows straightforwardly that $\pacat[\para]$ is equivalent, as a $\K$-linear monoidal category, to the category obtained from $\K\cob$ by factoring out by the additional relations \autoref{holes} and $\epsilon \circ \eta = \para$, cf. \cite{Co-jelly}.

Let us now introduce the following additional generator to $\cob$:
\begin{gather}\label{Eq:MGen}
m=\begin{tikzpicture}[baseline ={([yshift=-.5ex]current bounding box.center)}, scale=0.25]
\pic[
tqft,
incoming boundary components=1,
outgoing boundary components=1,
every lower boundary component/.style={draw},
genus=0,
scale=0.75,
draw,
thick,
boundary separation=40pt,
name=pop
]; 
\node at ([xshift=0pt, yshift=80pt]pop-outgoing boundary 1) {$\lightning$};
\end{tikzpicture},
\quad
\lightning
=
\raisebox{-0.1cm}{$\begin{tikzpicture}[scale=0.5, line cap=round, line join=round,anchorbase]
\coordinate (L) at (-2,0);
\coordinate (R) at ( 2,0);
\fill[orchid!8]
(L) .. controls (-0.8, 1.4) and (0.8, 1.4) .. (R)
.. controls (0.8,-1.4) and (-0.8,-1.4) .. cycle;
\draw[line width=1.0pt]
(L) .. controls (-0.8, 1.4) and (0.8, 1.4) .. (R);
\draw[line width=1.0pt]
(L) .. controls (-0.8,-1.4) and (0.8,-1.4) .. (R);
\draw[line width=1.0pt, -{Stealth[length=2.6mm]}]
(-0.15,1.05) -- (0.15,1.05);
\draw[line width=1.0pt, -{Stealth[length=2.6mm]}]
(0.15,-1.05) -- (-0.15,-1.05);
\node at (0, 1.55) {$a$};
\node at (0,-1.55) {$a$};
\fill (L) circle (1.3pt);
\fill (R) circle (1.3pt);
\node (B) at (0,-2.15) {};
\end{tikzpicture}$}
=\mathbb{RP}^2
,
\end{gather}
which represents the connected sum of a cylinder and the real projective plane $\mathbb{RP}^2$ \ie \, $m$ can be thought of as a Möbius strip glued to the identity cobordism. Let $\mcob{}$ denote the resulting category.

To simplify our illustrations, we will frequently identify cobordisms with their spines: instead of \autoref{Eq:Gen}, we draw \autoref{Eq:MoGen} (matching them following the nomenclature), and instead of \autoref{Eq:MGen}, we draw a M{\"o}bius dot \autoref{Eq:MDot}. We also use handle dots instead of
\begin{gather*}
h\colon \begin{tikzpicture}[baseline ={([yshift=-.5ex]current bounding box.center)}, scale=0.25]
\pic[
tqft,
incoming boundary components=1,
outgoing boundary components=1,
every lower boundary component/.style={draw},
genus=1,
scale=0.75,
draw,
thick,
boundary separation=40pt,
name=pop
]; 
\end{tikzpicture} \quad = \quad \begin{tikzpicture}[tqft, baseline ={([yshift=-.5ex]current bounding box.center)}, scale=0.25, cobordism/.style={draw},
every upper boundary component/.style={draw},
every lower boundary component/.style={draw}]
\pic[tqft/pair of pants,draw, thick, name=a, scale=0.5];
\pic[tqft/reverse pair of pants,draw, thick, scale=0.5, name=b, anchor = incoming boundary 1, at =(a-outgoing boundary 1)];
\end{tikzpicture} \quad = \quad \mu \circ \Delta.
\end{gather*}

\begin{Lemma}\label{L:Mobius}
The morphism $m$ satisfies the relations \autoref{mobmu}, \autoref{mobs}, and \autoref{mobrel}.
\end{Lemma}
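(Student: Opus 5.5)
All three relations are identities between cobordisms, so the plan is to check each one up to diffeomorphism using the classification of compact surfaces with boundary. A compact connected surface with fixed, labelled boundary circles split into incoming and outgoing is determined up to diffeomorphism rel boundary by three data: the number of boundary components, orientability, and the Euler characteristic (equivalently, genus or nonorientable genus). For morphisms in $\mcob{}$ we must in addition track the connected components and which boundary circles lie on each. Throughout, $m$ is the cylinder with one crosscap, i.e.\ the connected sum $(\mathbb{S}^1\times[0,1])\#\mathbb{RP}^2$. It is nonorientable, has two boundary circles, and has Euler characteristic $-1$. Connected sum with $\mathbb{RP}^2$ can be performed in any small disk of the interior, and the resulting diffeomorphism type does not depend on which disk is chosen (for connected surfaces).

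\textbf{Relations \autoref{mobmu} and \autoref{mobs}.} Each side of \autoref{mobmu} is the pair of pants $\mu$ with a single crosscap added somewhere in its interior. The three sides differ only in where that crosscap sits: on an incoming leg or on the outgoing leg. Since the pair of pants is connected, the disk used for the connected sum can be isotoped from one location to the other. So all three sides are diffeomorphic rel boundary to $\mu\#\mathbb{RP}^2$.

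Relation \autoref{mobs} works the same way, one component at a time. The crossing $s$ is a disjoint union of two cylinders, and the crosscap sits on the cylinder that joins the first incoming circle to the second outgoing circle. Sliding the crosscap along that cylinder moves it from below the crossing to above it. The other component is untouched.

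\textbf{Relation \autoref{mobrel}.} This is the main step, and it rests on Dyck's theorem: $\mathbb{RP}^2\#\mathbb{RP}^2\#\mathbb{RP}^2\cong T^2\#\mathbb{RP}^2$.
\begin{enumerate}
\item Compute $m^3$. It is a cylinder with three crosscaps. It is connected, nonorientable, has two boundary circles, and $\chi=0-3=-3$.
\item Compute $h\circ m$. The handle $h=\mu\circ\Delta$ is the cylinder with one handle, i.e.\ the cylinder connected-summed with $T^2$, with $\chi=-2$. So $h\circ m$ is the cylinder connected-summed with $T^2\#\mathbb{RP}^2$. It is again connected, nonorientable, has two boundary circles, and $\chi=-3$.
\item Conclude by classification. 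Both sides share all invariants, and both have one incoming and one outgoing boundary circle, so they are diffeomorphic rel boundary.
\end{enumerate}

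The point needing care is that classification up to diffeomorphism rel boundary respects the incoming/outgoing labelling. For connected surfaces this holds because the mapping class group acts transitively on the possible boundary parametrizations. Orientation-reversing boundary parametrizations can be realized in the nonorientable case, by sliding a boundary circle around a crosscap. If a more hands-on argument is preferred, I would instead exhibit Dyck's homeomorphism explicitly: a handle together with a crosscap is equivalent to three crosscaps, as seen by sliding one foot of the handle through the crosscap, which turns the handle into a Klein-bottle summand, and $K\#\mathbb{RP}^2\cong 3\,\mathbb{RP}^2$.
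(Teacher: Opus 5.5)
Your proposal is correct and takes essentially the paper's route: the first two relations by sliding the crosscap within its connected component, and the M\"obius relation via Dyck's theorem $\mathbb{RP}^2\#\mathbb{RP}^2\#\mathbb{RP}^2\cong T^2\#\mathbb{RP}^2$, which the paper only indicates with a picture of the closed Dyck surface. Your Euler characteristic and classification bookkeeping, together with the remark that boundary parametrizations can be matched on nonorientable surfaces, makes rigorous the rel-boundary step the paper leaves implicit. One caveat: your blanket claim that orientability, $\chi$ and the number of boundary circles determine a surface rel boundary (and that the mapping class group acts transitively on parametrizations) needs orientable pieces to be oriented compatibly with the boundary, since otherwise the identity cylinder and a boundary-reversing cylinder are a counterexample; you only invoke it for nonorientable surfaces, so nothing breaks.
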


\begin{proof}
Using topological arguments, it is straightforward to see that $m$ satisfies the relations \autoref{mobmu} and \autoref{mobs}. For \autoref{mobrel}, a visualization exercise shows
\begin{gather*}
\begin{tikzpicture}[baseline ={([yshift=-.5ex]current bounding box.center)}, draw, thick, scale=0.25]
\draw (0,0) circle (2cm);
\draw (-2,0) arc (180:360:2 and 0.6);
\draw[dashed, thick] (2,0) arc (0:180:2 and 0.6);
\node at (0,-1){$\lightning$};
\node at (-0.66,0.8){$\lightning$};
\node at (0.66,0.8){$\lightning$};
\end{tikzpicture}
=
\begin{tikzpicture}[scale=0.25, draw, thick, baseline ={([yshift=-.5ex]current bounding box.center)}]
\draw (0,0) ellipse (3cm and 2cm);
\draw (-1.5, 0.2) arc (180:360:1.5 and 0.6);
\draw (1,-0.2) arc (0:180:1 and 0.6);
\node at (-2,0){$\lightning$};
\end{tikzpicture}
,
\end{gather*}
from which the relations follows; this is not easy to see but well-known (there are animations on YouTube demonstrating the diffeomorphism between them). The surface in question is called the \emph{Dyck surface}.
\end{proof}

\begin{Remark}
The Dyck surface is an example of how Möbius strips behave like monoid elements (they are not cancelable in general) since
\begin{gather*}
\begin{tikzpicture}[baseline ={([yshift=-.5ex]current bounding box.center)}, draw, thick, scale=0.25]
\draw (0,0) circle (2cm);
\draw (-2,0) arc (180:360:2 and 0.6);
\draw[dashed, thick] (2,0) arc (0:180:2 and 0.6);
\node at (0,-1){$\lightning$};
\node at (-0.66,0.8){$\lightning$};
\node at (0.66,0.8){$\lightning$};
\end{tikzpicture}
=
\begin{tikzpicture}[scale=0.25, draw, thick, baseline ={([yshift=-.5ex]current bounding box.center)}]
\draw (0,0) ellipse (3cm and 2cm);
\draw (-1.5, 0.2) arc (180:360:1.5 and 0.6);
\draw (1,-0.2) arc (0:180:1 and 0.6);
\node at (-2,0){$\lightning$};
\end{tikzpicture}
\quad\text{but}\quad
\begin{tikzpicture}[scale=0.25, line cap=round, line join=round, anchorbase]
\coordinate (BL) at (0,0);
\coordinate (BR) at (4,0);
\coordinate (TR) at (4,4);
\coordinate (TL) at (0,4);
\fill[orchid!8] (BL) -- (BR) -- (TR) -- (TL) -- cycle;
\draw[line width=1.0pt] (BL) -- (BR) -- (TR) -- (TL) -- cycle;
\draw[line width=1.0pt,-{Stealth[length=2.6mm]}] (0.8,4) -- (3.2,4);
\draw[line width=1.0pt,-{Stealth[length=2.6mm]}] (0.8,0) -- (3.2,0);
\draw[line width=1.0pt,-{Stealth[length=2.6mm]}] (0,0.8) -- (0,3.2);
\draw[line width=1.0pt,-{Stealth[length=2.6mm]}] (4,3.2) -- (4,0.8);
\node at (2,4.7) {$b$};
\node at (2,-0.7) {$b$};
\node at (-0.7,2) {$a$};
\node at (4.7,2) {$a$};
\end{tikzpicture}
=
\begin{tikzpicture}[baseline ={([yshift=-.5ex]current bounding box.center)}, draw, thick, scale=0.25]
\draw (0,0) circle (2cm);
\draw (-2,0) arc (180:360:2 and 0.6);
\draw[dashed, thick] (2,0) arc (0:180:2 and 0.6);
\node at (0,-1){$\lightning$};
\node at (0,0.8){$\lightning$};
\end{tikzpicture}
\neq
\begin{tikzpicture}[scale=0.25, draw, thick, baseline ={([yshift=-.5ex]current bounding box.center)}]
\draw (0,0) ellipse (3cm and 2cm);
\draw (-1.5, 0.2) arc (180:360:1.5 and 0.6);
\draw (1,-0.2) arc (0:180:1 and 0.6);
\end{tikzpicture}
.
\end{gather*}
The latter is the famous observation that the Klein bottle is not the torus.
\end{Remark}

We now consider the notion of \emph{nonorientable 2D cobordisms}, which are based on the usual \emph{unoriented 2D cobordisms} as e.g. in \cite{TuTu-utqft} (excluding the gluing data), except that we preserve our conventions for orienting the boundary $\mathbb{S}^1$ as shown in \autoref{examporien}; in particular, we do not consider orientation-reversing diffeomorphisms of $\mathbb{S}^1$, which are explicitly included in \eg \, \cite{Cz-mobius-tqft}, as mentioned in \autoref{rm:nophi}.

\begin{Theorem}\label{T:Mcob}
The category $\mcob{}$ is the category of nonorientable 2D cobordisms.
The category is symmetric, pivotal and has a generator-relation presentation with generators \autoref{Eq:Gen} and \autoref{Eq:MGen}, and relations \autoref{startrel}--\autoref{holes} and \autoref{mobmu}--\autoref{mobrel}.
\end{Theorem}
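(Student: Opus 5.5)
We would prove \autoref{T:Mcob} by the standard "normal form" strategy for 2D cobordism presentations, as in \cite{Ko-tqfts} for $\cob$, now with the classification of compact nonorientable surfaces with boundary as extra input. Let $\mathcal{C}$ be the symmetric monoidal category presented by the generators \autoref{Eq:Gen}, \autoref{Eq:MGen} and the relations \autoref{startrel}--\autoref{holes} (with \autoref{holes} read as the definition $h=\mu\circ\Delta$, not as a relation killing the handle) together with \autoref{mobmu}--\autoref{mobrel}. Let $\mathcal{N}$ be the category of nonorientable 2D cobordisms, that is, all compact surfaces (orientable or not) whose boundary circles are identified with $\coprod\mathbb{S}^1$ via the fixed orientation conventions of \autoref{examporien}, taken up to boundary-preserving diffeomorphism.

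The first step is to define a functor $F\colon\mathcal{C}\to\mathcal{N}$ that sends each generator to the cobordism it depicts. It is well defined on $\cob$ by the classical result. On $m$ it is well defined because of \autoref{L:Mobius}, which says that the Möbius generator satisfies \autoref{mobmu}, \autoref{mobs}, and \autoref{mobrel} in $\mathcal{N}$. The functor is strict symmetric monoidal by construction. Since the image contains cups and caps, the pivotal structure of $\mathcal{N}$ is inherited, and the symmetric structure comes from $s$.

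Fullness comes next. Any cobordism in $\mathcal{N}$ is a disjoint union of connected ones, and the crossings permute boundary circles. So it suffices to treat a connected surface $\Sigma$ with $n$ incoming and $k$ outgoing circles. By the classification theorem, $\Sigma$ is a sphere with $n+k$ discs removed, connect-summed either with $g$ tori or with $c\geq1$ copies of $\mathbb{RP}^2$. We realise it by the normal form
\begin{gather*}
\Delta^{(k)}\circ h^{g}\circ m^{c}\circ\mu^{(n)},
\end{gather*}
where $\mu^{(n)}$ and $\Delta^{(k)}$ are iterated multiplications and comultiplications, using units and counits when $n=0$ or $k=0$. Connected sum with $\mathbb{RP}^2$ is exactly composition with $m$. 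Hence every morphism is in the image of $F$.

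Faithfulness is the main obstacle. Here we show that every word in $\mathcal{C}$ reduces, using the relations, to a disjoint union of such normal forms, and that $F$ is injective on normal forms. The reduction for the orientable part is Kock's argument. The new inputs are \autoref{mobmu} and \autoref{mobs}, which let the $m$'s slide anywhere on a connected component; the analogous sliding of $h$ follows from the Frobenius relation \autoref{frob}. After sliding, all decorations collect on one strand as $h^a m^b$. The relation \autoref{mobrel} then gives $h^a m^b = h^{a+1}m^{b-2}$ for $b\geq3$, which reduces to the forms $h^a$, $h^a m$, and $h^a m^2$.

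It remains to match these forms with topological types. The word $h^a$ gives the orientable surface of genus $a$. The word $h^a m$ gives a nonorientable surface with $c=2a+1$, and $h^a m^2$ gives one with $c=2a+2$. Thus each pair (orientability, Euler characteristic) appears exactly once, which is precisely the invariant set supplied by the classification theorem. Together with the count of boundary circles and the connectivity pattern, this shows that distinct normal forms map to non-diffeomorphic cobordisms, so $F$ is faithful.

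The delicate points are twofold. First, the orientation convention on the boundary circles must not introduce extra identifications. This is why orientation-reversing boundary maps are excluded, cf. \autoref{rm:nophi}: a diffeomorphism of a nonorientable surface can reverse a boundary circle, and the fixed convention means we never quotient by that. Second, the sliding of $m$ past the crossing requires \autoref{mobs} in both directions, which we obtain by conjugating with $s$ and using \autoref{startrel}.
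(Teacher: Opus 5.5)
Your proposal is correct and takes essentially the same route as the paper: a Kock-style normal form in which the handle and M\"obius dots slide freely along components, the M\"obius relation caps the number of M\"obius dots per component at two, and the resulting normal forms are matched against the classification of surfaces. Compared with the paper, you are more explicit about the comparison functor, the split into fullness and faithfulness, and the orientability/Euler characteristic bookkeeping, and you are right to read \autoref{holes} only as the definition $h=\mu\circ\Delta$ and not as the relation $\mu\circ\Delta=1_1$, since imposing the latter would kill the handle and contradict the topology.
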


\begin{proof}
Firstly, it is easy to see that the stated relations imply all other ``sensible variations'' of them. For example,
\begin{gather*}
\left(
\begin{tikzpicture}[anchorbase]
\draw[usual,mob=0.25] (0,0) to (0.5,0.5) to (1,0);
\draw[usual] (0.5,0.5) to (0.5,1);
\end{tikzpicture}
=
\begin{tikzpicture}[anchorbase]
\draw[usual] (0,0) to (0.5,0.5) to (1,0);
\draw[usual,mob=0.5] (0.5,0.5) to (0.5,1);
\end{tikzpicture}
=
\begin{tikzpicture}[anchorbase]
\draw[usual,mob=0.75] (0,0) to (0.5,0.5) to (1,0);
\draw[usual] (0.5,0.5) to (0.5,1);
\end{tikzpicture}
\right)
\Rightarrow
\left(
\begin{tikzpicture}[anchorbase,yscale=-1]
\draw[usual,mob=0.25] (0,0) to (0.5,0.5) to (1,0);
\draw[usual] (0.5,0.5) to (0.5,1);
\end{tikzpicture}
=
\begin{tikzpicture}[anchorbase,yscale=-1]
\draw[usual] (0,0) to (0.5,0.5) to (1,0);
\draw[usual,mob=0.5] (0.5,0.5) to (0.5,1);
\end{tikzpicture}
=
\begin{tikzpicture}[anchorbase,yscale=-1]
\draw[usual,mob=0.75] (0,0) to (0.5,0.5) to (1,0);
\draw[usual] (0.5,0.5) to (0.5,1);
\end{tikzpicture}
\right)
.
\end{gather*}
The remaining part of the proof is then a variation of \cite{Ko-tqfts}, so we will be brief. As a first step, the relations can be used to show that every cobordism has a normal form, that we call a \emph{presandwich structure}, as follows. 

\begin{Notation} 
The \emph{diagrammatic anti-involution} ${}^{\ast}$ flips a cobordism upside-down.
A diagram is a \emph{merge diagram} if it contains only multiplications, counits, a minimal number of crossings, and dots (handles or M{\"o}bius) on components restricted to the bottom of the diagram. 
A \emph{split diagram} is a ${}^{\ast}$-flipped merge diagram. A \emph{dotted permutation diagram} contains only dots (handles or M{\"o}bius) and crossings. For example (flipping the left illustration gives a split diagram):
\begin{gather*}
\text{Merge diagram: }
\begin{tikzpicture}[anchorbase]
\draw[usual] (0,0) to (0.5,0.5) to (1,0);
\draw[usual] (0.5,0.5) to (0.5,1);
\draw[usual] (2,0) to (0.5,0.75);
\draw[usual] (0.5,0) to (1,1);
\draw[usual,hol=0.2,mob=0.6,dot] (-0.5,0) to (-0.5,0.5);
\end{tikzpicture}
,\quad
\text{dotted permutation diagram: }
\begin{tikzpicture}[anchorbase]
\draw[usual,hol=0.85,mob=0.95] (0,0) to (3,1);
\draw[usual,mob=0.85] (1,0) to (1,1);
\draw[usual] (2,0) to (0,1);
\draw[usual] (3,0) to (2,1);
\end{tikzpicture}
.
\end{gather*}
\end{Notation}

The normal form is then given by:
\begin{gather*}
\begin{tikzpicture}[anchorbase,scale=1]
\draw[line width=0.75,color=black,fill=cream] (0,1) to (0.25,0.5) to (0.75,0.5) to (1,1) to (0,1);
\node at (0.5,0.75){$T$};
\draw[line width=0.75,color=black,fill=cream] (0.25,0) to (0.25,0.5) to (0.75,0.5) to (0.75,0) to (0.25,0);
\node at (0.51,0.25){$m$};
\draw[line width=0.75,color=black,fill=cream] (0,-0.5) to (0.25,0) to (0.75,0) to (1,-0.5) to (0,-0.5);
\node at (0.5,-0.25){$B$};
\end{tikzpicture}
\quad\text{where}\quad
\begin{aligned}
\begin{tikzpicture}[anchorbase,scale=1]
\draw[line width=0.75,color=black,fill=cream] (0,1) to (0.25,0.5) to (0.75,0.5) to (1,1) to (0,1);
\node at (0.5,0.75){$T$};
\end{tikzpicture}
&\text{ a split diagram,}
\\
\begin{tikzpicture}[anchorbase,scale=1]
\draw[white,ultra thin] (0,0) to (1,0);
\draw[line width=0.75,color=black,fill=cream] (0.25,0) to (0.25,0.5) to (0.75,0.5) to (0.75,0) to (0.25,0);
\node at (0.5,0.25){$m$};
\end{tikzpicture}
&\text{ a dotted permutation,}
\\
\begin{tikzpicture}[anchorbase,scale=1]
\draw[line width=0.75,color=black,fill=cream] (0,-0.5) to (0.25,0) to (0.75,0) to (1,-0.5) to (0,-0.5);
\node at (0.5,-0.25){$B$};
\end{tikzpicture}
&\text{ a merge diagram.}
\end{aligned}
\quad
\text{${}^{\ast}$: }
\left(\,
\begin{tikzpicture}[anchorbase,scale=1]
\draw[line width=0.75,color=black,fill=cream] (0,1) to (0.25,0.5) to (0.75,0.5) to (1,1) to (0,1);
\node at (0.5,0.75){$T$};
\draw[line width=0.75,color=black,fill=cream] (0.25,0) to (0.25,0.5) to (0.75,0.5) to (0.75,0) to (0.25,0);
\node at (0.51,0.25){$m$};
\draw[line width=0.75,color=black,fill=cream] (0,-0.5) to (0.25,0) to (0.75,0) to (1,-0.5) to (0,-0.5);
\node at (0.5,-0.25){$B$};
\end{tikzpicture}\,
\right)^{\ast}
=
\begin{tikzpicture}[anchorbase,scale=1]
\draw[line width=0.75,color=black,fill=cream] (0,1) to (0.25,0.5) to (0.75,0.5) to (1,1) to (0,1);
\node at (0.5,0.75){\reflectbox{\rotatebox{180}{$B$}}};
\draw[line width=0.75,color=black,fill=cream] (0.25,0) to (0.25,0.5) to (0.75,0.5) to (0.75,0) to (0.25,0);
\node at (0.51,0.25){\reflectbox{\rotatebox{180}{$m$}}};
\draw[line width=0.75,color=black,fill=cream] (0,-0.5) to (0.25,0) to (0.75,0) to (1,-0.5) to (0,-0.5);
\node at (0.5,-0.25){\reflectbox{\rotatebox{180}{$T$}}};
\end{tikzpicture},
\end{gather*}
which is almost the same as in \cite{Ko-tqfts} (see also \cite{GrTu-diagram-growth,FrStTu-twists}), since the relations involving the M{\"o}bius generator imply that the M{\"o}bius dot moves freely through its connected component. Supposing that every connected component contains at most two M{\"o}bius dots, the normal form is also unique.

Also recall the (well-known) classification of surfaces: given $\mathbb{S}^2$ the two sphere, $\mathbb{D}$ a disc, and $\mathbb{T}$ a torus, any surface $S$ is completely determined by $S\cong\mathbb{S}^2\#(\#^d\mathbb{D})\#(\#^t\mathbb{T})\#(\#^p\mathbb{RP}^2)$ for unique $d,p,t\in\N$ with $tp=0$. The M{\"o}bius relation \autoref{mobrel} holds by \autoref{L:Mobius}, and reading right to left implies that we have either handle or M{\"o}bius dots on a connected component, but not both at the same time; this is the condition $tp=0$. Of course, we could also read left to right, in which case we end with the condition that every connected component contains at most two M{\"o}bius strips. In other words, the above normal form fits perfectly to the classification of surfaces. 

From here, the result follows from \cite{Ko-tqfts}.
\end{proof}

Using the same setup as \autoref{mobalgdef}, we then consider the $\K$-linearization of $\mcob{}$ with closed surfaces evaluated according to \autoref{series1}--\autoref{series3} (the empty cobordism $\emptyset$ is evaluated to $1\in\K$).
Following \cite{BlHaMaVo-tqft-kauffman-bracket} (see also \cite{KhSa-cobordisms}), we use the \emph{universal construction} with the given closed surface evaluations.
Let $\mucob{\paraa}{\parab}{\parac}$ denote the resulting category.
Using $h$, we can define $\sigma$ as in \autoref{handle}, and $\sigma = 0$ in this context is also the \emph{handle relation}.

\begin{Proposition}
The category $\mucob{\paraa}{\parab}{\parac}$ is symmetric and pivotal, and has a generator-relation presentation with generators \autoref{Eq:Gen} and \autoref{Eq:MGen}, and relations \autoref{startrel}--\autoref{holes} and \autoref{mobmu}--\autoref{mobrel}, and the handle relation \autoref{handle}.
\end{Proposition}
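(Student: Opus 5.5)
The plan is to combine \autoref{T:Mcob} with an analysis of the universal construction along the lines of \cite{BlHaMaVo-tqft-kauffman-bracket,KhSa-cobordisms}.

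\textbf{Step 1: structure and the forward inclusion of relations.} Symmetry and pivotality descend from $\mcob{}$ by \autoref{T:Mcob}: the universal construction is a quotient of $\K\mcob{}$ by a tensor ideal, namely the kernels of the pairings given by closed-surface evaluation. So the generators \autoref{Eq:Gen} and \autoref{Eq:MGen} still generate, and the relations \autoref{startrel}--\autoref{endrel} and \autoref{mobmu}--\autoref{mobrel} still hold.

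Two relations are not inherited automatically and need checking.
\begin{enumerate}[label=$\bullet$]
\item \autoref{holes}, $\mu\circ\Delta=1_1$, has to be read in the presentation as $h$ being a new generator; otherwise the handle relation is vacuous. This should be understood as in \autoref{M:Monoid}, where \autoref{holes} is dropped.
\item The handle relation $\sigma=0$ requires an argument. By the universal construction, it suffices to show that $\sigma$ pairs to zero against every cobordism. Closing $\sigma$ up, each connected component containing the strand of $\sigma$ becomes $h^{j}$ times a fixed closed surface. By the normal form this surface has $t$ handles or $p\le 2$ M\"obius dots. Its evaluation is therefore a linear combination of $\sum_i(-1)^ia_i\,x_{k+K-i}$ with $x\in\{\paraa,\parab,\parac\}$. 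This vanishes because $q(T)Z_x(T)=p_x(T)$: the coefficients of $T^{n}$ in $q(T)Z_x(T)$ vanish for $n\geq K$, and $\deg p_x<K$ holds by \autoref{conds} (for $\paraa$ because $K\ge N+1$). Disconnected pieces only contribute scalar factors. Hence $\sigma$ lies in the kernel.
\end{enumerate}

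\textbf{Step 2: the reverse direction.} Let $\mathcal{C}$ be the category defined by the presentation. By Step 1 there is a full functor $\mathcal{C}\to\mucob{\paraa}{\parab}{\parac}$. Using the presandwich normal form from the proof of \autoref{T:Mcob}, together with $\sigma=0$ (which caps the number of handle dots per component at $K-1$) and \autoref{mobrel} (which allows either handles or at most two M\"obius dots), we obtain a finite spanning set of $\Hom_{\mathcal{C}}(n,m)$. It consists of partition-type diagrams whose components carry either $h^{j}$ with $j<K$, or $m$, or $m^{2}$ combined with some handles. Components with both M\"obius dots and handles are reduced using $m^3=hm$.

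For injectivity we have to show that this spanning set is linearly independent in $\mucob{\paraa}{\parab}{\parac}$, i.e. that the Gram pairing on it is nondegenerate modulo nothing further. The approach is to reduce, component by component, to the rank-one case $\Hom(0,1)$ (and its twisted versions). There the pairing matrix is a Hankel matrix in the coefficients $x_k$ of the $Z_x$, restricted to the truncated range.

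\textbf{Main obstacle.} Since $Z_x=p_x/q$ with the degree bounds in \autoref{conds}, the sequence $(x_k)$ satisfies a linear recurrence of order at most $K$ governed by $q$. The key claim is then that the Hankel matrices have rank exactly $K$ when the three series are combined suitably, so that the universal construction kills precisely the ideal generated by $\sigma$ and nothing more.

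I expect this rank statement to be the main obstacle. A single series may have smaller rank if $p_x$ and $q$ share factors, so the argument must use all three series jointly: $h^jm$ pairs into $\parab$ and $\parac$, while $h^j$ pairs into $\paraa$ and $\parab$. I would first try the standard result that the Hankel rank of $p/q$ in lowest terms equals $\max(\deg p+1,\deg q)$. If that fails to give the full rank, one possible way out is to interpret the presentation as allowing the universal construction to impose these additional (degenerate) relations as well.
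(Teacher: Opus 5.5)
Your Step 1 is the paper's own argument. Symmetry and pivotality come from the universal construction, and $\sigma$ is checked by closing it up, which gives $\paraa_K-a_1\paraa_{K-1}+\dots=0$, the coefficient of $T^K$ in $qZ_\paraa=p_\paraa$. Your treatment of arbitrary closures via $\deg p_x<K$ fills in what the paper only calls easy, and you are right that \autoref{holes} has to be dropped.

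The gap is Step 2. You leave it open, and the paper only asserts it, citing \cite{EhStTu-blanchet-khovanov}: that the normal form of \autoref{T:Mcob} is a basis. If $\mucob{\paraa}{\parab}{\parac}$ is really the universal construction, this claim is false for admissible parameters, so no rank computation can close it. Take $Z_\paraa=Z_\parab=Z_\parac=1/(1-T)$, so $K=1$; this is the monoid case of \autoref{mobalgdef}.
\begin{enumerate}
\item Every closed surface evaluates to $1$, so any two cobordisms $n\to m$ have identical closures.
\item Hence they coincide in $\mucob{\paraa}{\parab}{\parac}$, and every hom-space there is one-dimensional.
\item In particular the disks with $0$, $1$, $2$ M\"obius dots become equal. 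This matches $\det\gmatrix[0]=(\paraa_0-\parac_0)(\parac_0^2-\parab_0^2)=0$ in \autoref{roexp}.
\item These disks are not equal in the presented category. It maps to $\mpcat{\paraa,\parab,\parac}$, where they are distinct basis elements.
\end{enumerate}
Your fallback, letting the universal construction impose extra relations, changes the statement rather than proving it.

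Your route would also fail where the claim could hold, for generic parameters. A closing cobordism can join different boundary circles, so the pairing on $\Hom(0,n)$ does not split into one-circle pairings. Compare the degeneration of partition-category Gram matrices at integral $\para$.

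Even on one circle, a single Hankel matrix is the wrong object. What you need is that the $3K\times3K$ matrix pairing $h^jm^p$ with $h^{j'}m^{p'}$ ($j,j'<K$, $p,p'\le 2$) has rank $3K$. Its blocks are Hankel matrices of $\paraa_k,\parab_k,\parac_k$, together with the shifts $\parab_{k+1},\parac_{k+1}$ forced by $m^3=hm$. Rank $K$ for a single Hankel matrix does not give this.

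What is provable is a reinterpretation: read $\mucob{\paraa}{\parab}{\parac}$ as $\K\mcob{}$ modulo the closed-surface evaluations \autoref{series1}--\autoref{series3} and the ideal generated by $\sigma$. The evaluations must then be part of the presentation. Neither the statement nor your $\mathcal{C}$ lists them, and without them $\End(0)$ is not $\K$.
\begin{enumerate}
\item Decorations on a component then live in $\K[h,m]/(m^3-hm,\sigma)$, with basis $h^jm^p$, $j<K$, $p\le2$.
\item Your Step 1 computation says exactly that the evaluation functional kills the ideal generated by $\sigma$.
\item From this the normal-form basis follows as for $\mpcat{\paraa,\parab,\parac}$, with no Hankel ranks needed.
\end{enumerate}
The genuine universal construction is a further quotient by negligible morphisms, i.e. the level of $\mvcat{\paraa,\parab,\parac}$.
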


\begin{proof}
(The reader might benefit from checking \cite{EhStTu-blanchet-khovanov} for a similar argument spelled out in more detail.)

The universal construction implies that $\mucob{\paraa}{\parab}{\parac}$ is a well-defined monoidal category (e.g. it is associative), and that this category is symmetric and pivotal is easy to see.

For the remaining claim, first observe that the handle relation holds: this is essentially just the first row in \autoref{series1}. Indeed, we have
\begin{gather*}
\begin{tikzpicture}[anchorbase]
\draw[usual,hol=0.5,dot=0,dot=1] (0,0) to (0,0.5)node[left]{$K$} to (0,1);
\end{tikzpicture}
-a_1\cdot 
\begin{tikzpicture}[anchorbase]
\draw[usual,hol=0.5,dot=0,dot=1] (0,0) to (0,0.5)node[left]{$(K-1)$} to (0,1);
\end{tikzpicture}
+a_2\cdot 
\begin{tikzpicture}[anchorbase]
\draw[usual,hol=0.5,dot=0,dot=1] (0,0) to (0,0.5)node[left]{$(K-2)$} to (0,1);
\end{tikzpicture}
-\dots+(-1)^{\deg q}a_{\deg q}\cdot
\begin{tikzpicture}[anchorbase]
\draw[usual,hol=0.5,dot=0,dot=1] (0,0) to (0,0.5)node[left]{$(K-\deg q)$} to (0,1);
\end{tikzpicture}
=0,
\end{gather*}
the diagrammatic form of
\begin{gather*}
\paraa_K-a_1\cdot\paraa_{K-1}+a_2\cdot\paraa_{K-2}-\dots
-\dots+(-1)^{\deg q}a_{\deg q}\cdot\paraa_{K-\deg q}=0,
\end{gather*}
which holds by construction (recall \autoref{D:DAlgebrasTheAlgebras}).
Moreover, it is easy to see that any other way to close with relation gives a true equation as well.
Following \cite{EhStTu-blanchet-khovanov}, it is then easy to see that the unique normal form from the proof of \autoref{T:Mcob} gives a basis for the hom-spaces.
\end{proof}

Of course, the same modifications can be made to $\pacat[\para]$, firstly by ignoring the relations \autoref{holes} and $\epsilon \circ \eta =\para$, and introducing an additional generator $m$ satisfying \autoref{mobmu}--\autoref{mobrel}. It is clear that the resulting category is equivalent, as $\K$-linear symmetric pivotal category, to $\mpcat{\paraa, \parab, \parac}$ from the previous section, which in turn satisfies:

\begin{Theorem}
We have $\mpcat{\paraa, \parab, \parac} \cong \mucob{\paraa}{\parab}{\parac}$ as $\K$-linear symmetric pivotal categories.
\end{Theorem}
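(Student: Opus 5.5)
We construct a functor $F\colon \mucob{\paraa}{\parab}{\parac}\to\mpcat{\paraa, \parab, \parac}$ using the generator-relation presentation of $\mucob{\paraa}{\parab}{\parac}$ from the preceding proposition, and then show that $F$ is fully faithful and bijective on objects.

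\textbf{Step 1: defining $F$.} On objects, $F$ is the identity $n\mapsto n$. On generators, $F$ sends each generator in \autoref{Eq:Gen} to its spine in \autoref{Eq:MoGen}, and sends the Möbius generator \autoref{Eq:MGen} to the Möbius dot $m$. We must check that every defining relation of $\mucob{\paraa}{\parab}{\parac}$ holds in $\mpcat{\paraa, \parab, \parac}$.
\begin{enumerate}[label=$\bullet$]
\item The relations \autoref{startrel}--\autoref{endrel} hold because they hold in $\pacat$, and composition in $\mpcat{\paraa, \parab, \parac}$ is diagram concatenation.
\item The relation \autoref{holes} was dropped. Accordingly, $h=\mu\circ\Delta$ is sent to the handle dot, which is exactly how the handle dot is defined.
\item The relations \autoref{mobmu}--\autoref{mobrel} are imposed by definition.
\item The handle relation \autoref{handle} is killed by the quotient in \autoref{mobalgdef}.
\end{enumerate}
Hence $F$ is a well-defined $\K$-linear monoidal functor. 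It visibly preserves the crossing, cup and cap, so it is symmetric and pivotal.

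\textbf{Step 2: comparing hom-space bases.} Fullness is immediate: every basis diagram of $\mpcat{\paraa, \parab, \parac}$ is a $\circ$-$\otimes$-composite of $\mu,\eta,\Delta,\epsilon,s,m$ and handle dots, so it lies in the image of $F$. For faithfulness, we compare bases.
\begin{enumerate}[label=$\bullet$]
\item On the cobordism side, the proof of the preceding proposition shows that a basis is given by the presandwich normal forms $T\circ m\circ B$ in which each connected component carries either at most $K-1$ handle dots or at most two Möbius dots.
\item Under $F$, such a normal form is sent to a partition diagram (recording the connected components) decorated on each part by its dots. The classification of surfaces, used as in the proof of \autoref{T:Mcob}, shows that different normal forms give different decorated diagrams.
\item We then need that in $\mpcat{\paraa, \parab, \parac}$ these decorated diagrams are linearly independent after the quotient by $\In(\sigma)$.
\end{enumerate}

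\textbf{Step 3: the main obstacle.} The hard part is the last point of Step 2. The definition of $\mpcat{\paraa, \parab, \parac}$ as a quotient of $\dashmxcat[{\paraa, \parab, \parac}]$ gives no a priori control over the size of $\In(\sigma)$, and the paper even notes that associativity of $\dashmxcat[{\paraa, \parab, \parac}]$ is not immediate. Our plan is to avoid computing $\In(\sigma)$ directly.
\begin{enumerate}[label=$\bullet$]
\item Construct an inverse functor $G$ on the spanning diagrams of $\dashmxcat[{\paraa, \parab, \parac}]$: send a diagram to the cobordism obtained by replacing each part by a connected surface of the recorded genus or Möbius type.
\item Check that $G$ respects composition. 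When diagrams are concatenated, parts merge exactly as surfaces glue: handles add, Möbius dots add and reduce via \autoref{mobrel}, and closed components evaluate by \autoref{series1}--\autoref{series3} on both sides.
\item Check that $G$ sends $\sigma$ to $0$, which is exactly the handle relation in $\mucob{\paraa}{\parab}{\parac}$. Therefore $G$ descends to the quotient $\mpcat{\paraa, \parab, \parac}$.
\end{enumerate}
Both $G\circ F$ and $F\circ G$ fix generators, so they are the identity functors. This gives $F$ as the required isomorphism. As a by-product, $G$ transports associativity from $\mucob{\paraa}{\parab}{\parac}$, where it is known from the universal construction, so $\mpcat{\paraa, \parab, \parac}$ is indeed a well-defined category.
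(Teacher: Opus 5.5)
Your overall strategy is the paper's. Its proof just notes that the presentation of $\mucob{\paraa}{\parab}{\parac}$ from the preceding Proposition is what one gets by modifying $\pacat[\para]$, and declares the resulting category clearly equivalent to $\mpcat{\paraa, \parab, \parac}$. Your $F$ is that comparison, and your $G$ makes the word ``clearly'' explicit.

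There is, however, a circularity. Step~1 invokes the universal property of a presented monoidal category, which already requires $\mpcat{\paraa, \parab, \parac}$ to be a strict monoidal category (associativity, interchange law). Step~3 then obtains exactly this as a by-product of $F\circ G=\mathrm{id}$. The repair uses only $G$, and it also dissolves the obstacle you flagged in Step~2.
\begin{itemize}
\item $G$ is multiplicative on basis diagrams, which needs no associativity. Hence $G^{-1}(0)$ is stable under composing and tensoring with arbitrary morphisms, and it contains $\sigma$ by the handle relation, so it contains $\In(\sigma)$.
\item The image of $G$ is closed under $\circ,\otimes$ and contains all generators, so $G$ is surjective.
\item Reduced diagrams (at most $K-1$ handle dots per part) span the quotient. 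A part of $D$ with at least $K$ handle dots has a boundary point, so $D=D'\circ(1_a\otimes h^{K}\otimes 1_b)$ or $D=(1_a\otimes h^{K}\otimes 1_b)\circ D'$ is a single composite of basis diagrams. Replacing $h^K$ by $\sigma$ gives an element of $\In(\sigma)$ equal to $D$ plus a linear combination of diagrams with fewer handle dots on that part.
\item $G$ sends distinct reduced diagrams to distinct presandwich normal forms, which are linearly independent by the Proposition. So any linear relation among reduced diagrams modulo $\In(\sigma)$ is trivial.
\end{itemize}
Hence the induced map $\mpcat{\paraa, \parab, \parac}\to\mucob{\paraa}{\parab}{\parac}$ is bijective on hom-spaces. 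Transport associativity, the interchange law, the symmetry and the pivotal structure along it, and put $F=G^{-1}$.

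Second, ``handles add'' is not the right gluing rule, and read literally it makes $G$ non-multiplicative. If a new part arises from $t$ parts glued at $g$ middle points, additivity of the Euler characteristic shows that it carries the dots of its pieces plus $g-t+1$ extra handle dots, then normalized by \autoref{mobrel}. The basic instance is $G(\mu)\circ G(\Delta)$, a genus-one cylinder, which is why $h=\mu\circ\Delta$ in $\dashmxcat[{\paraa, \parab, \parac}]$. Checking that $G$ respects composition therefore means checking that diagrammatic concatenation creates exactly these handles. The same count is what justifies your Step~1 claim that \autoref{startrel}--\autoref{endrel} hold as in $\pacat$: none of them glues along a cycle. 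Likewise, ``M\"obius dots add'' uses that gluing orientable pieces stays orientable, which holds because orientation-reversing identifications of boundary circles are excluded.

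Finally, your $F$, like the paper's argument, relies on the Proposition's claim that $\mucob{\paraa}{\parab}{\parac}$ has no relations beyond the handle relation. That is correct for the quotient of the linearized $\mcob{}$ by the closed evaluations and $\sigma$. It is not correct for the literal universal construction (the quotient by negligible morphisms) at degenerate parameters. For $K=1$ and $\paraa_0=\parab_0=\parac_0=1$, where the Gram matrix of \autoref{roexp} has rank one, $\eta-m\circ\eta$ pairs to zero with $\epsilon$, $\epsilon\circ m$ and $\epsilon\circ m^{2}$. So it vanishes there but not in $\mpcat{\paraa, \parab, \parac}$.
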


\begin{proof}
By the above.
\end{proof}

As a result, we will solely refer to the category $\mpcat{\paraa, \parab, \parac}$ going forward.

\begin{Remark}
The Möbius strip diagram algebras $\mxmon{\paraa, \parab, \parac}$ are subalgebras of $\End_{\mpcat{\paraa, \parab, \parac}}(n)$ and in particular, they are finite dimensional.
\end{Remark}

Let $\mvcat{\paraa, \parab, \parac}$ be the semisimplification (as e.g. in \cite{EtOs-semisimple}) of $\mpcat{\paraa, \parab, \parac}$ using the traces defined by \autoref{series1}--\autoref{series3}, with trace map (see also \cite{KhSa-cobordisms}):
\begin{gather*}
\tr\colon \quad
\begin{tikzpicture}[scale=0.8, draw, thick, baseline ={([yshift=-.5ex]current bounding box.center)}]
\begin{scope}[scale=0.8,shift={(0,0)}] 
\draw[usual] (0.5,2)--(0.5, -1); 
\draw[usual] (1.5,2)--(1.5, -1);
\draw[usual] (2.5,2)--(2.5, -1);
\node at (2.5,0.5) {$f$};
\draw[usual, fill=white] (0,1)--(3,1)--(3,0)--(0,0)--(0,1);
\node at (1.5,0.5) {$f$};
\node at (5.5,0.5) {$\longmapsto$ };
\end{scope}      
 \begin{scope}[scale=0.5,shift={(13.3,0.25)}]
  \draw[usual] (5.2,.5) ellipse (1.2cm and 1.5cm); 
  \draw[usual](5.2,.5) ellipse (2.7cm and 3cm);
  \draw[usual] (5.2,.5) ellipse (4.2cm and 4.5cm);
\draw[usual, fill=white] (0,1.5)--(5,1.5)--(5,-0.5)--(0,-0.5)--(0,1.5);
\node at (2.5,0.5) {$f$}; 
            \end{scope}              
\end{tikzpicture}.
\end{gather*}

We conclude this section with a theorem relating $\mvcat{\paraa, \parab, \parac}$ to the Deligne category $\mathbf{Re}\mathbf{p}(S_{\para})$. First, let $\mdelcat{\paraa}{\parab}{\parac}$ denote the additive idempotent completion of $\mvcat{\paraa, \parab, \parac}$. In addition, let $\underline{\mathbf{MRe}}\mathbf{p}(S_{\paraa,\parab,\parac})$ and 
$\underline{\mathbf{Re}}\mathbf{p}(S_{\para})$ denote the quotient of $\mdelcat{\paraa}{\parab}{\parac}$ and $\delcat{\para}$ respectively by the $\circ$-$\otimes$-ideal of negligible morphisms. We have:

\begin{Theorem}\label{T:Deligne}
In case that
\begin{align}\label{basiccond}
Z_{\paraa}(T) = \frac{\paraa_0}{1-\lambda T}, \hspace{1cm} Z_{\parab}(T) = \frac{\parab_0}{1-\lambda T}, \hspace{1cm} Z_{\parac}(T) = \frac{\parac_0}{1-\lambda T},
\end{align}
with $\paraa_0, \parab_0, \parac_0,\sqrt{\lambda} \in \K^{\times}$, there is an equivalence of $\K$-linear symmetric pivotal categories 
\begin{align}\label{mrepeq}
\underline{\mathbf{MRe}}\mathbf{p}(S_{\paraa,\parab,\parac})
\cong
\underline{\mathbf{Re}}\mathbf{p}(S_{\para})
\boxtimes
\underline{\mathbf{Re}}\mathbf{p}(S_{\para_+})
\boxtimes
\underline{\mathbf{Re}}\mathbf{p}(S_{\para_-}),
\end{align}
where $\delta = \lambda\paraa_0 - \parac_0$, $\parap=\frac{1}{2}(\parac_0 + \sqrt{\lambda}\parab_0)$, and $\param=\frac{1}{2}(\parac_0 - \sqrt{\lambda}\parab_0)$. All of these are semisimple.
\end{Theorem}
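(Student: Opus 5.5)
We will proceed by decomposing the generating object into three orthogonal idempotents and identifying each resulting summand with a partition category. Under \autoref{basiccond} we have $\paraa_k=\lambda^k\paraa_0$, $\parab_k=\lambda^k\parab_0$ and $\parac_k=\lambda^k\parac_0$, and $q(T)=1-\lambda T$, so $K=1$. The handle relation \autoref{handle} therefore reads $h=\lambda\cdot 1_1$, and \autoref{mobrel} becomes $m^3=\lambda m$. Hence $m$ satisfies the polynomial $x(x-\sqrt\lambda)(x+\sqrt\lambda)$, which has distinct roots because $\sqrt\lambda\in\K^\times$ (we tacitly need $\mathrm{char}(\K)\neq 2$ here). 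The Lagrange idempotents
\[
e_0=1_1-\lambda^{-1}m^2,\qquad e_\pm=\tfrac{1}{2\lambda}\big(m^2\pm\sqrt\lambda\, m\big)
\]
are orthogonal and sum to $1_1$. In the idempotent completion this gives a decomposition $1=X_0\oplus X_+\oplus X_-$.

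The next step is to show that morphisms between different summands interact trivially. By \autoref{mobmu}, $m$ slides freely through any connected component. So a connected diagram with inputs and outputs decorated by $e_a$ and $e_b$, $a\neq b$, equals one in which $e_ae_b=0$ appears on a single strand, and it vanishes. Consequently the objects $X_{a_1}\otimes\dots\otimes X_{a_n}$ have Hom spaces that factor as tensor products over the three colours. This should give a symmetric monoidal equivalence between the $\mathbf{MRep}$ category and the Deligne tensor product of the three full subcategories generated by the $X_a$; we will check that the crossing \autoref{mobs} respects the colouring.

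Next we identify each subcategory with a Deligne category. Restricted to $X_a$, the generators $e_a\mu(e_a\otimes e_a)$, the unit, the comultiplication and the counit satisfy the Frobenius relations. We rescale the comultiplication and counit so that $\mu\circ\Delta=1$, which is possible since $h=\lambda$ and $m$ acts on $X_a$ by the scalar $0$ or $\pm\sqrt\lambda$. The circle is then $\epsilon e_a\eta$, up to the rescaling, evaluated via \autoref{series1}--\autoref{series3}.
\begin{itemize}
\item For $e_0$ the circle is $\paraa_0-\lambda^{-1}\parac_0$. After rescaling the handle to $1$ by the factor $\lambda$ this becomes $\lambda\paraa_0-\parac_0=\para$.
\item For $e_\pm$ the circle is $\frac{1}{2\lambda}(\parac_0\pm\sqrt\lambda\,\parab_0)$, which normalizes to $\para_\pm$.
\end{itemize}
The exact normalization has to be tracked carefully: it consists in choosing the scalar $c_a$ with $\mu\circ(c_a\Delta)=e_a$ and multiplying the circle value by $c_a$. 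This produces full functors $\pacat[\para_a]\to\mathrm{End}$-categories, which are compatible with the trace.

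The main obstacle is passing to negligible quotients and semisimplicity. The functors above become equivalences after quotienting by negligible morphisms. Both sides carry nondegenerate traces, a full functor preserving traces induces a faithful functor on negligible quotients, and essential surjectivity follows from the decomposition of $1$. The resulting categories $\underline{\mathbf{Re}}\mathbf{p}(S_t)$ are semisimple by the general result on semisimplification of Karoubian categories with a trace (cf. \cite{EtOs-semisimple}), or by Deligne for generic $t$. The delicate point is checking that every negligible morphism in the Möbius category decomposes into negligible components under the colour factorization. We would try to handle this using the fact that the trace is multiplicative across the Deligne tensor product.
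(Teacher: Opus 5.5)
Your proposal is correct, and it takes a genuinely different route from the paper. The paper does no computation of its own here. It observes that $\mathbf{MCob}$ is Czenky's unoriented cobordism category with the orientation-reversal generator $\phi$ removed, and imports \cite[Theorem II]{Cz-mobius-tqft}. It then asserts, without details, that dropping $\phi$ removes the $\mathbb{Z}_2$ in $\underline{\mathrm{Rep}}(S_t\wr\mathbb{Z}_2)$ and doubles $t$; semisimplicity is quoted from \cite{EtOs-semisimple}.

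You instead give a self-contained argument:
\begin{itemize}
\item $K=1$ forces $h=\lambda$, so $m^3=\lambda m$, and the generating object splits via your Lagrange idempotents.
\item Sliding $m$ along components kills every cross-colour diagram, so Hom spaces between colour-sorted tensor powers factor.
\item Each colour carries a special commutative Frobenius structure after rescaling.
\item The parameters are simply the categorical dimensions $\mathrm{tr}(e_a)=\lambda\,\epsilon e_a\eta$, namely $\lambda\alpha_0-\gamma_0$ and $\tfrac12(\gamma_0\pm\sqrt{\lambda}\,\beta_0)$.
\end{itemize}
Your route explains where $\delta,\delta_\pm$ come from and why no wreath factor appears. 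It also avoids relying on the unspecified ``minor alterations'' of Czenky's proof. The paper's route is shorter, provided one accepts that reference.

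A few points need tidying up.
\begin{itemize}
\item The rescaling is the same for all three colours. On every $X_a$ one has $\mu\circ\Delta=h\,e_a=\lambda e_a$, whatever $m$ does there. So you replace $\Delta$ by $\lambda^{-1}\Delta$, and $\epsilon$ by $\lambda\epsilon$ to keep the counit axiom. The circle is therefore multiplied by $\lambda=c_a^{-1}$, not by $c_a$ as you wrote; your bullet points do use the correct factor.
\item The rescaled cup and cap differ from the original ones by $\lambda^{-1}$ and $\lambda$. These cancel in every trace, so the functors $\mathbf{Pa}_{\delta_a}\to\langle X_a\rangle$ preserve traces.
\item These functors are fully faithful, not merely full. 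By the normal form, partition diagrams with $e_a$ placed on every component form a basis of the colour-$a$ Hom spaces.
\item The step you call delicate is standard linear algebra. For finite-dimensional spaces, the left radical of a tensor product of pairings is $N_1\otimes V_2+V_1\otimes N_2$, since tensor products of injections $\bar V_i\hookrightarrow W_i^{*}$ stay injective. Since the trace on the Deligne product is the product of the traces, the negligible quotient of the product is the product of the negligible quotients.
\item The assumption $\mathrm{char}(\K)\neq 2$ you flag is already implicit in the statement, which contains $\tfrac12$.
\end{itemize}
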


\begin{proof}
We begin by noticing that the category $\mcob{}$ is a subcategory of $\mathbf{U}\cob$, the category of two-dimensional unoriented cobordisms, as presented \eg \, in \cite{Cz-mobius-tqft}; in fact, to obtain $\mcob{}$ from $\mathbf{U}\cob$, we simply ignore the generator $\phi$ representing an orientation-reversing diffeomorphism of the circle. The construction of $\mdelcat{\paraa}{\parab}{\parac}$ then mirrors that of $\mathbf{U}\cob_{\paraa, \parab, \parac}$, and as a result, the proof of the theorem follows from that of \cite[Theorem II]{Cz-mobius-tqft} with some minor alterations; namely, the absence of $\phi$ eliminates a $\mathbb{Z}_2$ factor, which manifests as giving $\underline{\mathbf{Re}}\mathbf{p}(S_\para)$ instead of $\underline{\mathbf{Re}}\mathbf{p}(S_\para \wr \mathbb{Z}_2)$ in the first factor of the Deligne tensor product in \autoref{mrepeq} and multiplying $\para$ by a factor $2$. 

Finally, semisimplicity follows from a more general fact: given a $\K$-linear Karoubian rigid monoidal category $\mathcal{C}$ whose hom spaces are finite dimensional and that has a pivotal structure satisfying certain properties, the quotient of $\mathcal{C}$ by the tensor ideal generated by negligible morphisms produces a semisimple category (see \cite[Theorem 2.6]{EtOs-semisimple}).
\end{proof}

\begin{Example}
Take $\K=\C$ and $\lambda=1$, and say $\paraa_0=19$, $\parab_0=4$ and $\parac_0=10$. By \autoref{T:Deligne}, we have the equivalence
\begin{gather*}
\underline{\mathbf{MRe}}\mathbf{p}(S_{\paraa,\parab,\parac})
\cong\mathbf{Rep}(S_9)\boxtimes\mathbf{Rep}(S_7)\boxtimes
\mathbf{Rep}(S_3),
\end{gather*}
where $\mathbf{Rep}(S_k)$ denotes complex finite-dimensional representations of the symmetric group $S_k$.
\end{Example}


\section{Sandwich cellularity}\label{S:Sandwich}


We now study the representation theory of the Möbius strip diagram algebras using the framework of sandwich cellular algebras. For most of the this section, we follow the theory developed in \cite{Br-gen-matrix-algebras}, and some familiarity with \cite{Tu-sandwich-cellular} is assumed.

Let $\algebra$ be an associative unital $\K$-algebra. Recall the following definition:

\begin{Definition}\label{D:SandwichCellularAlgebra}
A \emph{sandwich cell datum} for $\algebra$ is a quadruple 
$\big(\Pcal,(\Tcal,\Bcal),(\sand,\sandbasis),C\big)$, where:
\begin{itemize}

\item $\Pcal=(\Pcal,\sandorder[\Pcal])$ is a poset (the \emph{middle poset} 
with \emph{sandwich order} $\sandorder[\Pcal]$),

\item $\Tcal=\bigcup_{\lambda\in\Pcal}\Tcal(\lambda)$ and $\Bcal=\bigcup_{\lambda\in\Pcal}\Bcal(\lambda)$ are collections of finite sets (the \emph{top/bottom sets}),

\item For $\lambda\in\Pcal$ we have
algebras $\sand[\lambda]$
(the \emph{sandwiched algebras}) and bases $\sandbasis[\lambda]$ of $\sand[\lambda]$,

\item  $C\colon\coprod_{\lambda\in\Pcal}\Tcal(\lambda)\times\sandbasis[\lambda]\times \Bcal(\lambda)\to\algebra;(T,m,B)\mapsto c_{T,m,B}^{\lambda}$ is an injective map,

\end{itemize}
such that:
\begin{enumerate}[label=\upshape(AC${}_{\arabic*}$\upshape)]

\item The set $\cellbasis=\set[\big]{c_{T,m,B}^{\lambda}|\lambda\in\Pcal,T\in\Tcal(\lambda),B\in\Bcal(\lambda),m\in\sandbasis[\lambda]}$
is a basis of $\algebra$.
(We call $\cellbasis$ a \emph{sandwich cellular basis}.)

\item For all $x\in\algebra$ 
there exist scalars $r_{TU}^{x}\in\K$ that do not depend
on $B$ or on $m$, such that
\begin{gather}\label{Eq:SandwichCellCondition}
xc_{T,m,B}^{\lambda}\equiv
\sum_{U\in\Tcal(\lambda),n\in\sandbasis}r_{TU}^{x}c_{U,n,B}^{\lambda}\pmod{\algebra^{\rsandorder[\Pcal]\lambda}}.
\end{gather}
Similarly for right multiplication by $x$.

\item There exists a free 
$\algebra$-$\sand[\lambda]$-bimodule $\dmod[\lambda]$,
a free  $\sand[\lambda]$-$\algebra$-bimodule
$\modd[\lambda]$, and an $\algebra$-bimodule isomorphism
\begin{gather}\label{Eq:SandwichCellAlgebra}
\calg=\algebra^{\gsandorder[\Pcal]\lambda}/\algebra^{\rsandorder[\Pcal]\lambda}\cong\dmod[\lambda]\otimes_{\sand[\lambda]}\modd[\lambda],
\end{gather}
where $\algebra^{\gsandorder[\Pcal]\lambda}$ ($\algebra^{\rsandorder[\Pcal]\lambda}$) stands for 
the $\K$-submodule of $\algebra$ spanned by
\begin{gather*}
\set{c_{U,n,V}^{\mu}|\mu\in\Pcal, \mu \gsandorder[\Pcal] \lambda \,(\mu\rsandorder[\Pcal]\lambda),U\in\Tcal(\mu),V\in\Bcal(\mu),n\in\sandbasis[\mu]}.
\end{gather*}
We call $\calg$ the \emph{cell algebra}, and $\dmod[\lambda]$ and 
$\modd[\lambda]$ left and right \emph{cell modules}.

\end{enumerate}
The algebra $\algebra$ is a \emph{sandwich cellular algebra} if it
has a sandwich cell datum.
\end{Definition}

\begin{Definition}\label{D:SandwichInvolutive}
In the setup of \autoref{D:SandwichCellularAlgebra}, suppose that $\Tcal(\lambda)=\Bcal(\lambda)$ for all $\lambda\in\Pcal$, and that there is an antiinvolution $(\placeholder)^{\star}\colon\algebra\to\algebra$ 
and order two bijections $(\placeholder)^{\star}\colon\sandbasis\to\sandbasis$  such that:
\begin{enumerate}[label=\upshape(AC${}_{4}$\upshape)]

\item We have $(c_{T,m,B}^{\lambda})^{\star}\equiv c_{B,m^{\star},T}^{\lambda}\pmod{\algebra^{\rsandorder[\Pcal]\lambda}}$.

\end{enumerate}
In this case we call the datum 
\emph{involutive} and write $\big(\Pcal,\Tcal,(\sand,\sandbasis),C,(\placeholder)^{\star}\big)$ 
for it.
\end{Definition}

\begin{Remark}\label{E:SandwichCellularAlgebra}
Sandwich cellular algebras have been rediscovered many times over the years, see e.g. \cite{Br-gen-matrix-algebras} for an early reference.
Two special cases of involutive sandwich cellular algebras are:
\begin{enumerate}

\item If $\sand\cong\K$, then 
$\big(\Pcal,\Tcal,(\sand\cong\K,\sandbasis=\{1\}),
C,(\placeholder)^{\star}\big)$ is a cell 
datum and $\algebra$ is a \emph{cellular 
algebra} in the sense of \cite{GrLe-cellular}.

\item If all sandwiched algebras are commutative, then 
$\big(\Pcal,\Tcal,(\sand,\sandbasis),C,(\placeholder)^{\star}\big)$ is an affine cell datum and $\algebra$ is an \emph{affine cellular algebra} in the sense of \cite{KoXi-affine-cellular}.

\end{enumerate}
A useful illustration is the following:
\begin{gather*}
\text{cellular: }
c_{T,1,B}^{\lambda}
\leftrightsquigarrow
\scalebox{0.8}{$\begin{tikzpicture}[anchorbase,scale=1]
\draw[mor] (0,-0.5) to (0.25,0) to (0.75,0) to (1,-0.5) to (0,-0.5);
\node at (0.5,-0.25){$B$};
\draw[mor] (0,0.5) to (0.25,0) to (0.75,0) to (1,0.5) to (0,0.5);
\node at (0.5,0.25){$T$};
\draw[thick,->] (1.5,0)node[right]{$\sand\cong\K$} to (1,0);
\end{tikzpicture}$}
,\quad
\text{affine cellular: }
c_{T,m,B}^{\lambda}
\leftrightsquigarrow
\scalebox{0.8}{$\begin{tikzpicture}[anchorbase,scale=1]
\draw[mor] (0,-0.5) to (0.25,0) to (0.75,0) to (1,-0.5) to (0,-0.5);
\node at (0.5,-0.25){$B$};
\draw[mor] (0,1) to (0.25,0.5) to (0.75,0.5) to (1,1) to (0,1);
\node at (0.5,0.75){$T$};
\draw[mor] (0.25,0) to (0.25,0.5) to (0.75,0.5) to (0.75,0) to (0.25,0);
\node at (0.5,0.25){$m$};
\draw[thick,->] (1.5,0.25)node[right]{commutative $\sand$} to (1,0.25);
\end{tikzpicture}$}
,
\end{gather*}
\begin{gather}\label{Eq:SandwichMnemonic}
\text{sandwich cellular: }
c_{T,m,B}^{\lambda}
\leftrightsquigarrow
\scalebox{0.8}{$\begin{tikzpicture}[anchorbase,scale=1]
\draw[mor] (0,-0.5) to (0.25,0) to (0.75,0) to (1,-0.5) to (0,-0.5);
\node at (0.5,-0.25){$B$};
\draw[mor] (0,1) to (0.25,0.5) to (0.75,0.5) to (1,1) to (0,1);
\node at (0.5,0.75){$T$};
\draw[mor] (0.25,0) to (0.25,0.5) to (0.75,0.5) to (0.75,0) to (0.25,0);
\node at (0.5,0.25){$m$};
\draw[thick,->] (1.5,0.25)node[right]{general $\sand$} to (1,0.25);
\end{tikzpicture}$}
,
\end{gather}
where we assume the existence of an antiinvolution for the 
top two pictures.
\end{Remark}

\begin{Example}
There are several important reasons to work with sandwich cellularity: first, many algebras admit ``nice'' sandwich bases, whereas their cellular bases are not well behaved; this occurs for \eg\, the Brauer algebra, as shown in \cite{FiGr-canonical-cases-brauer,SaSn-triangular}, KLR algebras of types A and C, cf. \cite{MaTu-klrw-algebras}, or various variations of diagram algebras, see e.g. \cite{LaReMoDu-affine-tl-monoid,Li-cyclic-monoid}. 

Moreover, some algebras are not cellular at all but nevertheless possess a well-structured sandwich datum, such as KLR algebras of more exotic types, see e.g. \cite{MaTu-klrw-algebras-bad,MaTu-klrw-crystal}, algebras where the antiinvolution is ``messy'', see e.g. \cite{GuWi-almost-cellular}, or affine diagram algebras \cite{KoXi-affine-cellular,HeTu-affine-monoid}. 

Finally, the concept of sandwich cellularity is directly motivated by, and partially generalizes, the corresponding theory for semigroups; compare \eg\, \cite{GaMaSt-irreps-semigroups} with \autoref{T:SandwichCMP} below. In particular, ``most'' semigroup rings are sandwich cellular, see e.g. \cite{Ea-cellular-semigroups}.
\end{Example}

\begin{Example}
A crucial example are the diagram algebras from above but without the M{\"obius} strip. They have a natural sandwich 
cellular structure, cf. \cite{FrStTu-twists}.
\end{Example}

The most important theorem concerning sandwich cellular algebras is $\textit{$H$-reduction}$, which relates simple $\algebra$-modules to simple modules over the sandwiched algebras. We begin with the following definition and lemma. Below, 
all modules are left modules unless otherwise stated.

\begin{Definition}
An \emph{apex} of an $\algebra$-module $\module$, if it exists, is a maximal $\lambda\in\Pcal$ such that
\begin{gather*}
\K\set[\big]{c_{T,m,B}^{\lambda}|T\in\Tcal(\lambda),B\in\Bcal(\lambda),m\in\sandbasis[\lambda]}
\end{gather*}
does not annihilate $\module$.
The same notion is used for $\calg$ instead of $\algebra$.
\end{Definition}

\begin{Lemma}\label{apexlem}
Every simple $\algebra$-module has an apex $\lambda\in\Pcal$ (uniquely associated to it), 
and similarly for 
simple $\calg$-modules.
\end{Lemma}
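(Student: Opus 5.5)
The plan is to exploit the filtration of $\algebra$ by the two-sided ideals $\algebra^{\gsandorder[\Pcal]\lambda}$ coming from (AC${}_2$), and then pick a maximal element among the $\lambda$ that act nontrivially. First I will check that for each $\lambda\in\Pcal$ the spans $\algebra^{\gsandorder[\Pcal]\lambda}$ and $\algebra^{\rsandorder[\Pcal]\lambda}$ are two-sided ideals of $\algebra$. By (AC${}_2$) and its right-handed version, left or right multiplication of $c^{\lambda}_{T,m,B}$ by any $x\in\algebra$ lands in $\K\set{c^{\lambda}_{U,n,B}}+\algebra^{\rsandorder[\Pcal]\lambda}$. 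An induction downward along the finite poset $\Pcal$ then shows that $\algebra^{\rsandorder[\Pcal]\lambda}$, and hence $\algebra^{\gsandorder[\Pcal]\lambda}$, is closed under left and right multiplication. Finiteness of $\Pcal$ holds because $\algebra$ is finite dimensional and $C$ is injective into a basis; in the infinite case I would instead assume $\Pcal$ has the ascending chain condition. The induction starts at the maximal elements, where $\algebra^{\rsandorder[\Pcal]\lambda}=0$.

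Now let $M$ be a simple $\algebra$-module. Since $1\in\algebra$ acts as the identity on $M\neq0$, and $\algebra$ is spanned by the $c^{\mu}_{U,n,V}$ via (AC${}_1$), some element $c^{\mu}_{U,n,V}$ does not annihilate $M$. So the set $S_M$ of those $\lambda$ whose cell span $\K\set{c^{\lambda}_{T,m,B}}$ does not annihilate $M$ is nonempty. Choose $\lambda\in S_M$ maximal, which is possible by finiteness. This $\lambda$ is an apex in the sense of the definition.

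Uniqueness is the main step. Let $\lambda$ be any maximal element of $S_M$. By maximality, every $\mu\rsandorder[\Pcal]\lambda$ lies outside $S_M$, so the ideal $\algebra^{\rsandorder[\Pcal]\lambda}$ annihilates $M$. Then $\algebra^{\gsandorder[\Pcal]\lambda}M$ is a nonzero submodule, because the ideal property makes it a submodule, and by simplicity it equals $M$. Now suppose $\lambda'$ is another element of $S_M$. I claim $\lambda'\lsandorder[\Pcal]\lambda$. To see this, write $M=\algebra^{\gsandorder[\Pcal]\lambda}M$ and note that $c^{\lambda'}_{T',m',B'}\,c^{\lambda}_{T,m,B}$ lies in the ideal generated by $c^{\lambda}$. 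Using (AC${}_2$) for right multiplication, the product lies in $\K\set{c^{\lambda'}_{T',n,U}}+\algebra^{\rsandorder[\Pcal]\lambda'}$, and it also lies in $\algebra^{\gsandorder[\Pcal]\lambda}$.

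The obstacle is to turn this into comparability. The cleanest route is to pass to the ideal $I_\lambda$ generated by the $c^{\lambda}$'s. For $\nu\in\Pcal$, the simple $M$ is annihilated by all of $\algebra^{\rsandorder[\Pcal]\nu}$ exactly when every $\mu\rsandorder[\Pcal]\nu$ lies outside $S_M$. Consider the ideal $J=\sum_{\mu\notin S_M}\algebra^{\gsandorder[\Pcal]\mu}$; it annihilates $M$. Applying the same argument to the other maximal element $\lambda'$ gives $\algebra^{\gsandorder[\Pcal]\lambda'}M=M$. Then
\[
M=\algebra^{\gsandorder[\Pcal]\lambda}\algebra^{\gsandorder[\Pcal]\lambda'}M .
\]
The product ideal $\algebra^{\gsandorder[\Pcal]\lambda}\algebra^{\gsandorder[\Pcal]\lambda'}$ is contained in $\algebra^{\gsandorder[\Pcal]\lambda}\cap\algebra^{\gsandorder[\Pcal]\lambda'}$. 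This intersection is spanned by basis elements of cells $\mu$ with $\mu\gsandorder[\Pcal]\lambda$ and $\mu\gsandorder[\Pcal]\lambda'$, because both ideals are spanned by subsets of the same basis $\cellbasis$. If $\lambda\neq\lambda'$, all such $\mu$ are strictly above $\lambda$, so the intersection lies in $\algebra^{\rsandorder[\Pcal]\lambda}$, which annihilates $M$. This forces $M=0$, a contradiction. Hence the maximal element is unique and the apex is well defined.

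For simple $\calg$-modules I run the identical argument. The cell algebra has the induced basis and the induced filtration, consisting of the cells $\mu\gsandorder[\Pcal]\lambda$ modulo those $\rsandorder[\Pcal]\lambda$. Here $\calg$ need not be unital, so I replace the use of $1$ by $\calg M=M$, which holds for a simple module with nonzero action.
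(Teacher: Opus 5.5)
Your argument is correct and is the standard one that the paper does not write out but cites from \cite[Lemma 2.15]{TuVa-handlebody}. The spans $\mathscr{A}^{\geq\lambda}$ are two-sided ideals directly from (AC${}_2$), since $\mathscr{A}^{>\mu}\subseteq\mathscr{A}^{\geq\lambda}$ whenever $\mu\geq\lambda$, so your induction is unnecessary. A maximal non-annihilating $\lambda$ gives $\mathscr{A}^{\geq\lambda}M=M$, and a second maximal $\lambda'\neq\lambda$ would force $M=\mathscr{A}^{\geq\lambda}\mathscr{A}^{\geq\lambda'}M\subseteq\mathscr{A}^{>\lambda}M=0$.

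Two cleanups are needed, neither affecting the logic. First, delete the abandoned detour through $I_\lambda$ and $J$, because the assertion that $J$ annihilates $M$ is false in general. In this paper $\mathcal{P}=\mathbb{Z}_{\geq 0}$ with the reverse order, so every integer $\mu>n$ has an empty cell and $\mathscr{A}^{\geq_{\mathcal{P}}\mu}=\mathscr{A}$, which gives $J=\mathscr{A}$. Second, for the same reason, replace ``$\mathcal{P}$ is finite'' by ``only finitely many labels have nonempty cells'', which is all you actually use.
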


\begin{proof}
Standard, see e.g. \cite[Lemma 2.15]{TuVa-handlebody} .
\end{proof}

\begin{Notation}
For a class $X$ of modules we write $X\,/\cong$ for the set 
of isomorphism classes obtained from $X$ by 
identifying isomorphic modules.
\end{Notation}

\begin{Lemma}(\emph{$H$-reduction})\label{T:SandwichCMP}
Let $\K$ be a field.
\begin{enumerate}

\item If $\lambda\in\Pcal$ is an apex and $\sand$ is Artinian, then we have bijections
\begin{gather*}
\begin{aligned}
\text{Start}\colon&
\\
&\mystrut
\\
\text{$J$-reduction}\colon&
\\
&\mystrut
\\
\text{$H$-reduction}\colon&
\end{aligned}
\begin{gathered}
\left\{
\text{simple $\algebra$-modules with apex $\lambda$}
\right\}/\cong
\\
\xleftrightarrow{1:1}
\\
\left\{
\text{simple $\calg$-modules with apex $\lambda$}
\right\}/\cong
\\
\xleftrightarrow{1:1}
\\
\left\{
\text{simple $\sand$-modules}
\right\}/\cong
.
\end{gathered}
\end{gather*}
Moreover, every simple $\algebra$-module arises this way.

\item All $\algebra$-modules with apex $\lambda$ have composition 
factors of apex $\mu$ with $\mu\lsandorder[\Pcal]\lambda$.

\end{enumerate}

\end{Lemma}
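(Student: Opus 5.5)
We follow the standard strategy from \cite{Tu-sandwich-cellular} (going back to \cite{Br-gen-matrix-algebras} and the semigroup version \cite{GaMaSt-irreps-semigroups}) and factor the bijection through the cell algebra, as the statement indicates.

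\emph{Step 1: the ``Start'' bijection.} Let $M$ be a simple $\algebra$-module with apex $\lambda$ (which exists and is unique by \autoref{apexlem}). First we check that $\algebra^{\rsandorder[\Pcal]\lambda}$ is a two-sided ideal; this is read off from (AC${}_2$) applied on both sides. It annihilates $M$: each $\mu\rsandorder[\Pcal]\lambda$ annihilates $M$ by maximality of the apex, and the span over all such $\mu$ is the ideal. Hence $M$ is a module over $\algebra^{\gsandorder[\Pcal]\lambda}/\algebra^{\rsandorder[\Pcal]\lambda}=\calg$, and $\calg$ acts nontrivially on $M$. A simple module over the algebra with $\calg$ acting nontrivially is simple over $\calg$. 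Conversely, a simple $\calg$-module $N$ with apex $\lambda$ gives $\algebra\otimes$-induction, or more concretely $\algebra e$-type constructions. It is cleaner to use the standard idempotent-ideal correspondence: for an ideal $I$ of $\algebra$, simple $\algebra$-modules not killed by $I$ but killed by the smaller ideal correspond bijectively to simple $I/I'$-modules with $I\cdot N\neq0$. We use the functor $N\mapsto \algebra\otimes_{\algebra}$-extension $N$ via $\calg\subset\algebra/\algebra^{\rsandorder[\Pcal]\lambda}$, composed with the head, and the restriction functor in the other direction. This is the classical lemma, e.g. in Green's theory, and we cite \cite{TuVa-handlebody} for the details.

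\emph{Step 2: $J$-/$H$-reduction.} By (AC${}_3$) we have $\calg\cong\dmod[\lambda]\otimes_{\sand}\modd[\lambda]$, with multiplication given through the sandwich pairing $\phi\colon\modd[\lambda]\otimes_{\algebra}\dmod[\lambda]\to\sand$. In bases, $\calg$ is a generalized matrix algebra: elements are matrices indexed by $\Tcal(\lambda)\times\Bcal(\lambda)$ with entries in $\sand$, and multiplication is $X\cdot Y=X\,\gmatrix\,Y$ for the Gram (sandwich) matrix $\gmatrix$ with entries in $\sand$. We define functors in both directions. For a simple $\sand$-module $K$, set $\dmod=\dmod[\lambda]\otimes_{\sand}K$ and let $L(\lambda,K)$ be its quotient by the radical of the induced bilinear pairing, i.e. by the kernel of $\dmod\to\Hom_{\sand}(\modd[\lambda],K)$ given by $\phi$. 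For a simple $\calg$-module $M$, take $\modd[\lambda]\otimes_{\calg}M$, or equivalently $e M$ for a suitable idempotent-like element; this is an $\sand$-module.

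\emph{Main obstacle.} The hardest step is showing that these constructions are mutually inverse on simples and that apex $\lambda$ is equivalent to $L(\lambda,K)\neq0$. The argument runs through Morita-type equivalence for idempotent ideals of a generalized matrix algebra. We first treat the case where $\gmatrix$ has an entry that is a unit in $\sand$; otherwise $\calg$ is nilpotent and has no simple modules with apex $\lambda$, so $\lambda$ is not an apex. In the unit case, $\calg$ contains an idempotent $e$ with $e\calg e\cong\sand$, and $\calg e\calg=\calg$. Simple $\calg$-modules not killed by $e$ then correspond to simple $e\calg e$-modules via $M\mapsto eM$ and $K\mapsto$ the head of $\calg e\otimes_{e\calg e}K$. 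The Artinian hypothesis on $\sand$ is used here so that heads exist and are simple. Every simple $\algebra$-module has some apex, so every simple module arises.

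\emph{Step 3: part (b).} Let $M$ have apex $\lambda$, so $\algebra^{\rsandorder[\Pcal]\lambda}M=0$ by the argument of Step 1. Any composition factor $L$ is also annihilated by $\algebra^{\rsandorder[\Pcal]\lambda}$. Its apex $\mu$ satisfies $\algebra^{\mu}L\neq0$, so $\mu\not\rsandorder[\Pcal]\lambda$. By maximality in the ideal ordering, using that $\algebra^{\gsandorder[\Pcal]\mu}$ acts, one then concludes $\mu\lsandorder[\Pcal]\lambda$, which is the standard formulation in \cite{Tu-sandwich-cellular}.
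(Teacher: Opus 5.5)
Your Step 2 has a genuine gap. You dismiss the case where no entry of the sandwich matrix is a unit of $\sand$ by claiming that $\calg$ is then nilpotent. That holds only for local $\sand$. Here is a counterexample: take $\sand=\K\times\K$, $\Tcal(\lambda)=\Bcal(\lambda)$ a singleton, and sandwich matrix $(e_1)$ with $e_1=(1,0)$. Then $\calg=\K\times\K$ with product $x\ast y=xe_1y$, that is, $\calg=\K e_1\oplus\K e_2$ with $e_1$ idempotent and $e_2\calg=\calg e_2=0$. Setting $\algebra=\K 1\oplus\calg$, with a second cell $\nu<_{\Pcal}\lambda$ spanned by $1$, gives a datum satisfying (AC${}_1$)--(AC${}_3$).
\begin{itemize}
\item No entry of the sandwich matrix is a unit, yet $\calg$ is not nilpotent.
\item $\lambda$ is the apex of the simple module on which $e_1$ acts by $1$.
\item This is the only simple $\algebra$-module with apex $\lambda$, while $\sand$ has two simple modules.
\end{itemize}
So the complementary case cannot be argued away; it is exactly where a bijection with \emph{all} simple $\sand$-modules fails. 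In general, simple $\calg$-modules with nonzero action correspond only to simple $\sand$-modules not annihilated by the two-sided ideal generated by the sandwich matrix entries. What you are missing is the input that the cited theorem relies on and that holds in the paper's applications: the apex $J$-cell is idempotent, and $\sand$ is realised as $\K\hcell_\lambda(e)$ modulo higher terms with identity $\tfrac{1}{s(e)}e$ (part (v) of \autoref{T:SandwichIsSandwich}). This puts you precisely in your unit-entry case. You handle that case correctly: $e\calg e\cong\sand$, $\calg e\calg=\calg$, and the idempotent correspondence applied to the unitization of $\calg$. Note, though, that Artinianity is not what makes the head of $\calg e\otimes_{e\calg e}K$ simple. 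For every simple $K$, its unique maximal submodule is the largest submodule killed by $e$.

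There are also two smaller points. First, the converse in Step 1 is garbled. The clean construction extends the action of $\calg$ on $N=\calg N$ to $\algebra$ by $a\cdot\sum_i c_in_i=\sum_i(ac_i)n_i$. This is well defined because $\{n\in N\mid \calg n=0\}$ is a proper, hence zero, $\calg$-submodule; no induction or head is needed. Second, in Step 3, $\mu\not>_{\Pcal}\lambda$ does not give $\mu\leq_{\Pcal}\lambda$ in a general poset. A direct sum of two simples with incomparable apexes violates (b) as stated. You need $\Pcal$ to be totally ordered, as it is for the diagram algebras here. Alternatively, you need $\lambda$ to be the maximum, not merely a maximal element, of $\{\nu\mid c^{\nu}_{T,m,B}M\neq 0\text{ for some }T,m,B\}$.
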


\begin{proof}
Standard, see e.g. \cite[Theorem 2.16]{TuVa-handlebody}.
\end{proof}

\begin{Notation}
In case \autoref{T:SandwichCMP} applies, we write $\apex \subset \Pcal$ for the set of apexes for the simple $\algebra$-modules.
\end{Notation}

We would now like to describe a sandwich cell datum for $\mxmon{\paraa, \parab, \parac}$. First, fix a nonnegative odd integer $r$ and consider the rational functions 
\begin{align}\label{gencond}
Z_{\paraa}(T) = \frac{p_\paraa(T)}{1-T^r}, \hspace{1cm} Z_{\parab}(T) = \frac{p_\parab(T)}{1- T^r}, \hspace{1cm} Z_{\parac}(T) = \frac{p_\parac(T)}{1-T^r},
\end{align}
with polynomial degrees satisfying the conditions of \autoref{D:DAlgebrasTheAlgebras}. To motivate the sandwich cellular structure, we have the following lemma, cf. \cite[Lemma 4B.9]{Tu-sandwich-cellular}:

\begin{Lemma}\label{L:DAlgebrasDMonFactor}
Let $\mxmon{1}$ be symmetric.
For $a\in\mxmon{1}$ there is a unique factorization of the form $a=\tau\circ\sigma_{\lambda}^{\mathcal{M}}\circ\beta$ such that:
\begin{enumerate}[label=(\roman*)]
\item $\beta$ 
and $\tau$ have a minimal 
number of crossings, handle dots, and Möbius dots;
\item $\beta$ contains no cups, $\Delta$, or $\eta$;
\item $\tau$ contains no caps, $\mu$, or $\epsilon$;
\item $\sigma_{\lambda}^\mathcal{M}\in  \mathcal{M} \wr \sym[\lambda]$ for minimal $\lambda$, where $\mathcal{M}$ is a monoid with following presentation:
\begin{align*}
\mathcal{M} = \langle a, b \, \vert \, ab=ba, \, ab = 
b^3, \, a^K=a^{K-r} \rangle.
\end{align*}
\end{enumerate}

Similarly for $a\in\mxmon{1}$ when $\mxmon{1}$ is planar, but with $\sigma_{\lambda}^\mathcal{M}\in \mathcal{M}^\lambda$.\qed
\end{Lemma}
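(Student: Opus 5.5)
The plan is to reduce to the undecorated factorization \cite[Lemma 4B.9]{Tu-sandwich-cellular} for $\xmon{1}$ and then account for the decorations. First we pin down the decorations carried by a single part. By \autoref{mobmu} and \autoref{mobs}, all handle and Möbius dots on one connected component slide freely, so each part carries a monomial $h^{i}m^{j}$. Set $a\mapsto h$ and $b\mapsto m$. The relations of $\mathcal{M}$ then say three things. First, $h$ and $m$ commute. Second, $ab=b^3$ is the Möbius relation \autoref{mobrel}. Third, $a^K=a^{K-r}$ is the handle relation \autoref{handle}, which under \autoref{gencond} with $q(T)=1-T^r$ reads $h^K=h^{K-r}$. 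Hence the possible decorations of a part are exactly the elements of $\mathcal{M}$. This uses the normal form from the proof of \autoref{T:Mcob}: at most two Möbius dots, and handles and Möbius dots not both present unless reduced. Uniqueness of this normal form shows that distinct words in $\mathcal{M}$ give distinct decorations.

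Next we take the undecorated diagram $\underline{a}\in\xmon{1}$ obtained by erasing all dots. The cited lemma factors it uniquely as $\underline{\tau}\circ\sigma_\lambda\circ\underline{\beta}$. Here $\lambda$ is the number of propagating parts (those meeting both top and bottom), $\underline{\beta}$ is a merge-type diagram containing no cups, $\Delta$ or $\eta$, $\underline{\tau}$ is a split-type diagram containing no caps, $\mu$ or $\epsilon$, and $\sigma_\lambda\in\sym[\lambda]$. Both $\underline{\beta}$ and $\underline{\tau}$ carry the minimal number of crossings (this is the presandwich normal form of \autoref{T:Mcob}). We then redistribute the dots. A non-propagating part lies entirely in $\beta$ (bottom only) or entirely in $\tau$ (top only), so its decoration is placed there. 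A propagating part passes through the middle layer, so we slide all its dots onto the corresponding strand of $\sigma_\lambda$. This turns $\sigma_\lambda$ into an element of $\mathcal{M}\wr\sym[\lambda]$, that is, a permutation together with an $\mathcal{M}$-label on each strand. The resulting $\beta$ and $\tau$ have the minimal number of dots, which gives (i). Minimality of $\lambda$ is automatic, since $\lambda$ is determined by $a$ as the number of propagating parts.

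For uniqueness, suppose $a=\tau'\circ\sigma'\circ\beta'$ is another factorization satisfying (i)--(iv). Erasing dots gives a factorization of $\underline{a}$, so the undecorated pieces agree by the cited uniqueness. Because $\beta'$ and $\tau'$ have the minimal number of dots, every dot of a propagating part must sit in the middle. The $\mathcal{M}$-label on each middle strand is then the decoration of the corresponding part of $a$, which is well defined by the normal form. The planar case goes the same way. There the middle permutation is trivial, so the middle layer is simply $\mathcal{M}^\lambda$.

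The main obstacle is making ``minimal number of dots'' precise for $\beta$ and $\tau$. One also has to check that no relation can move a decoration between a propagating part and a non-propagating one. This holds because dots move only within a connected component. The point needing care is the identification with $\mathcal{M}$ itself: the parameters make it finite, and we need the evaluation rules never to collapse distinct labels on open parts. Only closed components are evaluated, so the normal-form basis of the hom-spaces settles this.
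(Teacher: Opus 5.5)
Your proposal is correct and follows essentially the same route as the paper. Both arguments slide all dots freely along their components and park the decorations of propagating parts on the middle strands. Both then invoke \cite[Lemma 4B.9]{Tu-sandwich-cellular} for the undecorated factorization, read off the middle as an element of $\mathcal{M}\wr\sym[\lambda]$ (resp.\ $\mathcal{M}^\lambda$), and use the handle relation under \autoref{gencond} to supply $a^K=a^{K-r}$.

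One small correction: \autoref{mobmu} and \autoref{mobs} only move Möbius dots. The free sliding of handle dots $h=\mu\circ\Delta$ instead comes from the Frobenius-type relations \autoref{startrel}, \autoref{hol1}--\autoref{hol4} and \autoref{frob}, as the paper notes.
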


\begin{proof}
(i)--(iii) are as in the proof of \autoref{T:Mcob}. We first note that the Möbius dots move freely along strands as a result of relations \autoref{mobmu} and \autoref{mobs} (the mirror image diagrams are implied by the other relations). The holes also move freely, since
\begin{align*}
\begin{tikzpicture}[anchorbase]
\draw[usual,hol=0.25] (0,0) to (0.5,0.5) to (1,0);
\draw[usual] (0.5,0.5) to (0.5,1);
\end{tikzpicture}
=
\begin{tikzpicture}[anchorbase]
\draw[usual] (0,0) to (0.5,0.5) to (1,0);
\draw[usual,hol=0.5] (0.5,0.5) to (0.5,1);
\end{tikzpicture}
=
\begin{tikzpicture}[anchorbase]
\draw[usual,hol=0.75] (0,0) to (0.5,0.5) to (1,0);
\draw[usual] (0.5,0.5) to (0.5,1);
\end{tikzpicture}
,\quad
\begin{tikzpicture}[anchorbase]
\draw[usual,hol=0.25] (0,0) to (1,1);
\draw[usual] (1,0) to (0,1);
\end{tikzpicture}
=
\begin{tikzpicture}[anchorbase]
\draw[usual,hol=0.75] (0,0) to (1,1);
\draw[usual] (1,0) to (0,1);
\end{tikzpicture}
\end{align*}
(and their mirror images) follows using the relations \autoref{startrel}, \autoref{hol1}--\autoref{hol4}, and \autoref{frob}. As a result, any holes or Möbius dots that appear on a strand connecting the bottom of the diagram to the top can be moved to the middle. The conditions (i)--(iii) then follow from \cite[Lemma 4B.9]{Tu-sandwich-cellular}.

For the final condition (iv), we have holes and Möbius dots present on $\lambda$ separate strands connected the bottom of the diagram to the top, which may or may not contain crossings; it is straightforward to see that the configuration of the strands is (uniquely) specified by an element of $\mathcal{M} \wr \sym[\lambda]$ in the former case, and of $\mathcal{M}^\lambda$ in the latter.
\end{proof}

\begin{Notation}
With respect to the factorization in \autoref{L:DAlgebrasDMonFactor}, 
we call $\lambda$ the number of \emph{through strands}, 
$\beta$ the \emph{bottom}, $\tau$ the \emph{top}, and 
$\sigma_{\lambda}^\mathcal{M}$ the \emph{middle} of $a$.
\end{Notation}

\begin{Proposition}\label{mobcelldatum}
The following specifies an involutive sandwich cell datum for $\mxmon{\paraa, \parab, \parac}$:
\begin{itemize}
\item The middle poset $\Pcal$ is given by the set of nonnegative integers with reverse standard ordering as the sandwich order $\sandorder[\Pcal]$;
\item For $\lambda \in \Pcal$, the top/bottom sets are given by the top/bottom of all $a \in \mxmon{1}$ with a fixed number of through strands $\lambda$;
\item For $\lambda \in \Pcal$, we have $\sand= \mathcal{M} \wr \sym[\lambda]$, or $\sand= \mathcal{M}^\lambda$ in the planar case (we really mean the $\K$-linearization of the respective monoids when referring to the sandwiched algebras);
\item The injective map $C$ is the obvious one given \autoref{L:DAlgebrasDMonFactor};
\item The involution $(\placeholder)^{\star}$ reflects individual diagrams about the horizontal.
\end{itemize}
\end{Proposition}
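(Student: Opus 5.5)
The plan is to verify the axioms (AC${}_1$)--(AC${}_4$) of \autoref{D:SandwichCellularAlgebra} and \autoref{D:SandwichInvolutive} directly from the factorization in \autoref{L:DAlgebrasDMonFactor}, following the template of the Möbius-free case in \cite[Section 4B]{Tu-sandwich-cellular} and \cite{FrStTu-twists}.

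\textbf{(AC${}_1$).} By the normal form in the proof of \autoref{T:Mcob} and the basis statement following it, the reduced diagrams of $\mxmon{1}$ form a basis of $\mxmon{\paraa,\parab,\parac}$. Here reduced means at most $K-1$ handle dots, via \autoref{handle} with $q(T)=1-T^r$, and at most two Möbius dots per component, via \autoref{mobrel}. The uniqueness in \autoref{L:DAlgebrasDMonFactor} shows that $C$ is a bijection from triples $(\tau,\sigma,\beta)$ onto this basis. One has to confirm that the dot-decorations on a through strand are exactly parametrized by $\mathcal{M}$: $a\leftrightarrow h$ and $b\leftrightarrow m$, with $ab=b^3$ being \autoref{mobrel} and $a^K=a^{K-r}$ being the handle relation $h^K=h^{K-r}$ for $q=1-T^r$. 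In the nonplanar case, permutations of through strands carrying these decorations give $\mathcal{M}\wr\sym[\lambda]$. A small point to check is that the sandwiched algebra is the monoid algebra of $\mathcal{M}$, not a twisted version. This holds because the relation $h^K=h^{K-r}$ has no scalars, so the $\K$-span of the middles multiplies as the monoid.

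\textbf{(AC${}_2$).} For a diagram $x$ and a basis element $c=\tau\circ\sigma\circ\beta$ with $\lambda$ through strands, concatenate $x\circ\tau$. The number of through strands cannot increase. If it stays equal to $\lambda$, then $x\circ\tau$ factors as $\tau'\circ\sigma'$ with $\sigma'$ a middle, and $\tau'$ and $\sigma'$ depend only on $x$ and $\tau$. Any scalar from closed components, evaluated via \autoref{series1}--\autoref{series3}, also depends only on $x$ and $\tau$. The product $x c$ is then $\tau'\circ(\sigma'\sigma)\circ\beta$ times that scalar. Dots and crossings from $x\circ\tau$ that land on through strands are absorbed into the middle by sliding them freely, as in the proof of \autoref{L:DAlgebrasDMonFactor}. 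Reducing $\sigma'\sigma$ in $\mathcal{M}\wr\sym[\lambda]$ gives a linear combination with coefficients $r^x_{TU}$ independent of $\beta$ and $m$. If the number of through strands drops, the term lies in $\algebra^{>\lambda}$ in the reversed order. Right multiplication is symmetric. Linear extension handles general $x$.

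\textbf{(AC${}_3$).} Take $\dmod[\lambda]$ to be the span of the diagrams $\tau\circ\sigma$ modulo fewer through strands; it is free over $\sand$ with basis the tops. Define $\modd[\lambda]$ dually, and use the multiplication map $\dmod[\lambda]\otimes_{\sand}\modd[\lambda]\to\calg$. Both sides are free of rank $|\Tcal(\lambda)|\cdot|\sandbasis|\cdot|\Bcal(\lambda)|$, and the map is surjective, so it is an isomorphism.

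\textbf{(AC${}_4$).} The horizontal reflection is an antiinvolution because relations \autoref{startrel}--\autoref{holes} and \autoref{mobmu}--\autoref{mobrel} are closed under flipping, as noted at the start of the proof of \autoref{T:Mcob}, and the evaluations are flip invariant. It sends $\tau\sigma\beta$ to $\beta^\ast\sigma^\ast\tau^\ast$, with $\sigma\mapsto\sigma^\ast$ an order-two bijection of the middles, namely inversion of the permutation with the decorations kept. The bottom sets are identified with the top sets via ${}^\ast$.

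The main obstacle is (AC${}_2$). One must show that the reduction of decorations when strands merge in $x\circ\tau$ is governed entirely by the middle monoid $\mathcal{M}$ and never mixes tops. The delicate cases are a through strand merging with a non-through component carrying handle or Möbius dots, and Möbius-dot reduction \autoref{mobrel} converting $m^3$ into $hm$ on a through strand. I would first check that every such merge contributes a fixed element of $\mathcal{M}$ determined by $(x,\tau)$, multiplied on the left of $\sigma$.
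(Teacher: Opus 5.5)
Your proposal is correct and follows the paper's route. Both arguments verify (AC${}_1$)--(AC${}_4$) from the factorization in \autoref{L:DAlgebrasDMonFactor}. The key observation in both is that, under \autoref{gencond}, the handle relation becomes the scalar-free $h^K=h^{K-r}$ and the Möbius relation is also scalar-free, so the middles multiply exactly as in the monoid $\mathcal{M}\wr\sym[\lambda]$ (or $\mathcal{M}^\lambda$ in the planar case). The only difference is that the paper gets the top/bottom bookkeeping for (AC${}_2$) by citing the dot-free datum \cite[Proposition 4B.10]{Tu-sandwich-cellular}, whereas you spell out the concatenation and dimension-count arguments directly.
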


\begin{proof}
The elements of $\mxmon{1}$ are basis for $\mxmon{\paraa, \parab, \parac}$ by construction, so (AC$_1$) follows. 

For (AC$_2$), we first note that in the case that holes and Möbius dots are the identity (forcing $p_\paraa(T) = p_\parab(T) =p_\parac(T) =\paraa_0$ and $r=1$ in \autoref{gencond}), there is an analogous cell datum where $\mathcal{M}$ is the trivial monoid \cite[Proposition 4B.10]{Tu-sandwich-cellular}. Since holes and Möbius dots move freely, we then only need to show that multiplication in the sandwiched algebras $\mathcal{M} \wr \sym[\lambda]$, or $\mathcal{M}^\lambda$ in the planar case, does not introduce element-dependent coefficients. The only relevant relations are the Möbius relation, which does not introduce any coefficients, and the handle relation:
\begin{gather*}
\begin{tikzpicture}[anchorbase]
\draw[usual,hol=0.5] (0,0) to (0,0.5)node[left]{$K$} to (0,1);
\end{tikzpicture}
-a_1\cdot 
\begin{tikzpicture}[anchorbase]
\draw[usual,hol=0.5] (0,0) to (0,0.5)node[left]{$(K-1)$} to (0,1);
\end{tikzpicture}
+a_2\cdot 
\begin{tikzpicture}[anchorbase]
\draw[usual,hol=0.5] (0,0) to (0,0.5)node[left]{$(K-2)$} to (0,1);
\end{tikzpicture}
-\dots+(-1)^{\deg q}a_{\deg q}\cdot
\begin{tikzpicture}[anchorbase]
\draw[usual,hol=0.5] (0,0) to (0,0.5)node[left]{$(K-\deg q)$} to (0,1);
\end{tikzpicture}
=0,
\end{gather*}
which, from \autoref{gencond}, in our context reduces to 
\begin{gather*}
\begin{tikzpicture}[anchorbase]
\draw[usual,hol=0.5] (0,0) to (0,0.5)node[left]{$K$} to (0,1);
\end{tikzpicture}
= 
\begin{tikzpicture}[anchorbase]
\draw[usual,hol=0.5] (0,0) to (0,0.5)node[left]{$(K-r)$} to (0,1);
\end{tikzpicture},
\end{gather*}
and so does not introduce any coefficients either.

Furthermore, it is straightforward to verify that left  $\Delta(\lambda)$ (right $\nabla(\lambda)$) cells with elements defined by taking $\K$-linear combinations of the top and middle (middle and bottom) of all $a \in \mxmon{1}$ with a fixed number of through strands $\lambda$ satisfy the final condition (AC$_3$).

Finally, given the map $(\placeholder)^{\star}$, the conditions of \autoref{D:SandwichInvolutive} are immediate.
\end{proof}

Before we describe some properties of the sandwich cell datum for $\mxmon{\paraa, \parab, \parac}$ given above, we briefly recall the notion of cells for based algebras, and their connection with sandwich cell datum; this will be important for the remaining two sections.

Suppose that the algebra $\algebra$ has a basis $\cellbasis$, so that $(\algebra,\cellbasis)$ is a \emph{(based) pair}. For $a,b,c\in\cellbasis$ 
we write $b\usummand ca$ if, when 
$ca$ is expanded in terms of $\cellbasis$, 
$b$ appears with a nonzero coefficient in $ca$.  

\begin{Definition}
The preorders
\begin{gather*}
(a\leq_{l}b)\Leftrightarrow
\exists c:b\usummand ca
,\quad
(a\leq_{r}b)\Leftrightarrow
\exists d:b\usummand ad
,\quad
(a\leq_{lr}b)\Leftrightarrow
\exists c,d:b\usummand cad
\end{gather*}
on $\cellbasis$ are called left, right and two-sided \emph{cell orders}.
\end{Definition}

\begin{Remark}\label{R:SandwichOrder}
Our convention for the 
cell orders follows the one commonly used
in the theory of cellular algebras, but the opposite of the one typically seen in monoid theory.
\end{Remark}

\begin{Definition}\label{D:SandwichCells}
Equivalence classes defined using the relations
\begin{gather*}
(a\sim_{l}b)\Leftrightarrow
(a\leq_{l}b\text{ and }b\leq_{l}a)
,\;
(a\sim_{r}b)\Leftrightarrow
(a\leq_{r}b\text{ and }b\leq_{r}a)
,\;
(a\sim_{lr}b)\Leftrightarrow
(a\leq_{lr}b\text{ and }b\leq_{lr}a)
\end{gather*}
are called 
left, right respectively two-sided \emph{cells}. An \emph{$H$-cell} $\hcell=\hcell(\lcell,\rcell)=\lcell\cap\rcell$ is an intersection of a left cell $\lcell$ and a right cell $\rcell$.
\end{Definition}

\begin{Notation}
We also say \emph{$J$-cells} instead of two-sided cells, following 
the notation in \cite{Gr-structure-semigroups}.
\end{Notation}

Since the left, right and $J$-cell orders induce preorders on the left, right and $J$-cells \cite[Lemma 2B.5]{Tu-sandwich-cellular}, it makes sense to index the cells by their respective preorders. For example, given an indexing set $\Pcal$ for the $J$-cells, the notation $\jcell_\lambda$, $\lambda \in \Pcal$, refers to fixed $J$-cell.

We now introduce the notion of an idempotent $J$-cell:

\begin{Notation}
An equation {\etc} holds \emph{up to higher order terms} if 
it holds modulo $\jideal= \K\big\{\bigcup_{\mu\in\Pcal,\mu>_{lr}\lambda}\jcell_{\mu}\big\}$, which is a two-sided ideal in $\algebra$.
\end{Notation}

\begin{Definition}\label{D:SandwichIdempotent}
If the $\K$-linear span of a $J$-cell
$\jcell$ contains a (nonzero) pseudo-idempotent 
up to higher order terms, 
that is, $e\in\K\jcell\setminus\{0\}$ with 
$e^{2}=s(e)\cdot e\pmod{\jideal}$ for $s(e)\in\K\setminus\{0\}$, 
then we call $\jcell$ \emph{idempotent}. 
We say $\jcell$ is \emph{strictly idempotent} if 
$\jcell$ itself contains a pseudo-idempotent up to higher order terms. 
We use the same terminology for $H$-cells.
\end{Definition}

\begin{Notation}
We write $\jcell_{\lambda}(e)$, 
$\hcell_{\lambda}(e)$ \etc \, for idempotent cells.
\end{Notation}

\begin{Lemma}
For the pair $(\algebra,\cellbasis)$:
\begin{enumerate}
\item Every $H$-cell is contained in some $J$-cell, and every $J$-cell is a disjoint union of $H$-cells.

\item If $s(e)\in\K\setminus\{0\}$ is invertible, then $\sand=\K\hcell_{\lambda}(e)/\jideal$ is an algebra with identity $\frac{1}{s(e)}e$. This algebra is a subalgebra of $\algebra^{\leq_{lr}\lambda}=\algebra/\jideal$ and 
$\algebra^{\lambda}=\algebra^{\leq_{lr}\lambda}\cap\K\jcell_{\lambda}$.
\end{enumerate}
\end{Lemma}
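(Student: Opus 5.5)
The plan is to unwind the definitions of the cell preorders and of pseudo-idempotents, using only that $\cellbasis$ is a basis and that $\jideal$ is a two-sided ideal; these are the standard facts in \cite[Section 2B]{Tu-sandwich-cellular}.

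For part (a), I first check that $\leq_l$ refines $\leq_{lr}$. If $b\usummand ca$, then $b\usummand ca\cdot 1$. Expanding $1$ in the basis $\cellbasis$, some basis element $d$ has $b\usummand cad$, possibly after also expanding $c$ in the basis. Similarly, $\leq_r$ refines $\leq_{lr}$. Hence $a\sim_l b$ or $a\sim_r b$ implies $a\sim_{lr}b$. So every left cell and every right cell lies in a single $J$-cell, and therefore so does every $H$-cell $\lcell\cap\rcell$. Conversely, each $a\in\cellbasis$ lies in exactly one left cell $\lcell(a)$ and one right cell $\rcell(a)$, hence in exactly one $H$-cell $\lcell(a)\cap\rcell(a)$. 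Since the $H$-cells inside a $J$-cell $\jcell$ are intersections of equivalence classes, they are pairwise disjoint and cover $\jcell$. This gives the disjoint union decomposition.

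For part (b), I work in the quotient $\algebra^{\leq_{lr}\lambda}=\algebra/\jideal$. The first step is to show that $\K\hcell_\lambda(e)$ is closed under multiplication modulo $\jideal$. For $a,b\in\hcell=\lcell\cap\rcell$, any basis summand $c\usummand ab$ satisfies $c\geq_l b$ and $c\geq_r a$. So either $c$ lies in a strictly higher $J$-cell, and then $c\in\jideal$, or $c$ lies in $\jcell_\lambda$.

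The main obstacle is the latter case: showing that $c$ then lies in $\lcell(b)\cap\rcell(a)=\hcell$. This requires the standard fact that within one $J$-cell, $c\geq_l b$ together with $c\sim_{lr}b$ forces $c\sim_l b$. For a general based algebra this can fail, and I will use idempotency to get it. Because $e^2\equiv s(e)e$ with $s(e)\neq 0$, elements of $\hcell$ are generated from $e$ on both sides modulo $\jideal$. I will show that $x\mapsto ex e/s(e)^2$ acts as the identity on $\K\hcell$ modulo higher terms. This lets me factor $c$ back through $e$ and obtain $b\geq_l c$, and symmetrically on the right.

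Once closure is established, $\tfrac{1}{s(e)}e$ is idempotent modulo $\jideal$, and the computation above shows it is a two-sided identity on $\K\hcell_\lambda(e)/\jideal$. This makes $\sand$ a unital algebra. It is a subalgebra of $\algebra/\jideal$ by construction, though not unitally, since its identity differs from $1$.

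Finally, $\algebra^{\leq_{lr}\lambda}\cap\K\jcell_\lambda$ is the image of $\K\jcell_\lambda$ in the quotient, and it is a two-sided ideal there modulo strictly lower cells. This is the definition of $\algebra^{\lambda}$, and it holds because $\jideal\cap\K\jcell_\lambda=0$, the basis being partitioned into $J$-cells.
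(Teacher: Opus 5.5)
Part (a) is fine. Showing that $\leq_l$ and $\leq_r$ refine $\leq_{lr}$ (expand $1$ in $\cellbasis$) is exactly what is needed; the rest is bookkeeping with equivalence classes.

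The gap is in (b), at the step you yourself flag. You assert that elements of $\hcell$ are ``generated from $e$ on both sides'' and that $x\mapsto exe/s(e)^2$ is the identity on $\K\hcell$ modulo $\jideal$. For an arbitrary based pair, however, $x\sim_l e$ only means that $x$ occurs with nonzero coefficient in some $ce$, and $e$ in some $c'x$. It does not put $x$ in $\algebra e$, so you cannot conclude $xe\equiv s(e)x$, and idempotency of $e$ does not repair this.

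In fact the conclusion fails in this generality. Let $\algebra=\K^3$ with componentwise multiplication and $\mathrm{char}(\K)\neq 2$, and take the basis $e=(1,1,0)$, $x=(1,0,1)$, $y=(0,1,1)$. These are idempotents, and
$ex=\tfrac12(e+x-y)$, $ey=\tfrac12(e-x+y)$, $xy=\tfrac12(-e+x+y)$.
So every basis element occurs in every product of two distinct basis elements. Hence $\cellbasis$ is a single $J$-, left, right and $H$-cell, $\jideal=0$, and $e$ is a pseudo-idempotent in it with $s(e)=1$. Yet $e$ is not the identity of $\K\hcell=\K^3$; for instance $exe=(1,0,0)\neq x$. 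So no argument can close this step without an extra hypothesis. The paper itself only cites \cite[Lemma 2B.15]{Tu-sandwich-cellular}.

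The missing hypothesis is that a product of basis elements is a scalar multiple (possibly zero) of a basis element. It holds in the application here: under \eqref{gencond}, composing two diagrams of $\mxmon{1}$ gives a scalar times a diagram, cf.\ the proof of \autoref{mobcelldatum}. With it, your Green-style strategy goes through.
\begin{enumerate}
\item \emph{Identity.} A chain witnessing $x\geq_l e$ composes to $x=\kappa\,de$, with $\kappa\in\K^{\times}$ and $d$ a product of basis elements. Hence $xe\equiv\kappa\,de^2\equiv s(e)x\pmod{\jideal}$, and dually $ex\equiv s(e)x$.
\item \emph{Closure.} Let $a,b\in\hcell$ with $ab=\kappa c$ and $c\in\jcell_\lambda$. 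From $a\sim_l e$ pick a product $d$ of basis elements with $da=\kappa_1e$. Then $dc=\kappa^{-1}\kappa_1eb\equiv\kappa^{-1}\kappa_1s(e)b\not\equiv 0$. Since $dc$ is a multiple of a single basis element, it is a nonzero multiple of $b$, so $b\geq_l c$. Together with $c\geq_l b$ this gives $c\sim_l b$; dually $c\sim_r a$, so $c\in\hcell$.
\end{enumerate}
This exact-factorization property, not idempotency alone, is what your argument needs.
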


\begin{proof}
See \cite[Lemma 2B.15]{Tu-sandwich-cellular}.
\end{proof}

Furthermore, in the special case that the pair $(\algebra,\cellbasis)$ is \emph{involutive} \ie\, it admits an order two bijection
${\placeholder}^{\star}\colon\cellbasis\to\cellbasis$ 
that gives rise to an antiinvolution on $\algebra$, then there are mutually inverse bijections $\{\text{left cells}\}\leftrightarrow\{\text{right cells}\}$
that preserve containment in $J$-cells, cf. \cite[Lemma 2B.18]{Tu-sandwich-cellular}.

The connection to sandwich cellular algebras is given by:

\begin{Lemma}\label{T:SandwichIsSandwich}
For any sandwich cellular algebra $\algebra$ 
we get a pair $(\algebra,\cellbasis)$ for which the above theory applies 
with the following cell structure:
\begin{enumerate}[label=\emph{\upshape(\roman*)}]

\item The basis $\cellbasis$ is the 
sandwich cellular basis.

\item The poset can be taken (potentially changing the order) to 
be $\Pcal=(\Pcal,<_{lr})$.

\item The set $\Tcal(\lambda)$ indexes the right cells within $\jcell_{\lambda}$, and $\Bcal(\lambda)$ indexes the left cells within $\jcell_{\lambda}$.

\item All left, right and 
$H$-cells within one $J$-cell are of the same size.

\item If $\jcell_{\lambda}$ is idempotent, then 
the respective sandwiched algebra and cell algebra are isomorphic to
$\sand$ and $\algebra^{\lambda}$.

\end{enumerate}
\end{Lemma}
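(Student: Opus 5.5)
We argue directly from the axioms (AC$_1$)--(AC$_3$), in the spirit of \cite[Theorem 2.16]{Tu-sandwich-cellular}, rather than from the specific diagrammatics. Item (i) is a definition: we take the sandwich cellular basis $\cellbasis$ as the basis of the based pair. The first real step is to compare the cell orders with the sandwich order. By (AC$_2$), left multiplication of $c^{\lambda}_{T,m,B}$ by any $x\in\algebra$ yields, modulo $\algebra^{\rsandorder[\Pcal]\lambda}$, only basis elements $c^{\lambda}_{U,n,B}$ with the same $B$. The right-handed version fixes $T$. Iterating, if $b\usummand c\,a\,d$ with $a$ in layer $\lambda$, then $b$ lies in a layer $\mu\gsandorder[\Pcal]\lambda$. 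Hence the two-sided cell order refines the sandwich order.

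Next we show that each layer $\set{c^{\lambda}_{T,m,B}}$ is a single $J$-cell, or a union of $J$-cells on which $\leq_{lr}$ restricts as claimed. For this we use the bimodule isomorphism (AC$_3$), $\calg\cong\dmod[\lambda]\otimes_{\sand[\lambda]}\modd[\lambda]$. Left multiplication connects any two tops $T,U$ (through $\dmod[\lambda]$), and right multiplication connects any two bottoms. The sandwiched part $m$ is reached through the $\sand[\lambda]$-action. This gives the classification of left, right and $J$-cells required in (iii): right cells are indexed by $\Tcal(\lambda)$ and left cells by $\Bcal(\lambda)$. We then replace $\sandorder[\Pcal]$ by the order induced from $<_{lr}$ on the layers. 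This is the ``potentially changing the order'' in (ii), and it still satisfies (AC$_2$) and (AC$_3$), because the ideals $\algebra^{\rsandorder[\Pcal]\lambda}$ only get coarser in a compatible way.

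For (iv), an $H$-cell is the set $\set{c^{\lambda}_{T,m,B}|m\in\sandbasis[\lambda]}$ with $T$ and $B$ fixed, so every $H$-cell in $\jcell_{\lambda}$ has size $|\sandbasis[\lambda]|$. Left cells then have size $|\Tcal(\lambda)|\cdot|\sandbasis[\lambda]|$ and right cells have size $|\Bcal(\lambda)|\cdot|\sandbasis[\lambda]|$. We read the claim as equicardinality of cells of the same type, which is immediate from the product structure of the indexing; in the involutive case left and right cells also match.

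For (v), suppose $\jcell_{\lambda}$ contains a pseudo-idempotent $e$ up to higher terms. Inside $\dmod[\lambda]\otimes_{\sand[\lambda]}\modd[\lambda]$ we write $e$ as an element supported on some $T_0,B_0$. We then show that the map $\sand[\lambda]\to\K\hcell(T_0,B_0)/\jideal$, $m\mapsto c^{\lambda}_{T_0,m,B_0}$, twisted by the pairing $\modd[\lambda]\times\dmod[\lambda]\to\sand[\lambda]$ evaluated at $(B_0,T_0)$, is an algebra isomorphism after rescaling by $s(e)$. The cell algebra statement then follows from the definition $\algebra^{\lambda}=\calg$.

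We expect the main obstacle to be this last step. The difficulty is to show that the sandwich pairing restricted to the idempotent $H$-cell is invertible, so that the naive linear bijection with $\sand[\lambda]$ becomes multiplicative. We would first try to use the existence of $e$ to produce an invertible element $\phi(B_0,T_0)\in\sand[\lambda]$, and then conjugate by it.
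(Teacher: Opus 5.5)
The paper gives no argument of its own here; it cites \cite[Theorem 2B.19]{Tu-sandwich-cellular}. Your direct proof therefore has to carry everything, and it breaks in your second paragraph. Your first step is fine: (AC$_2$) and its right-handed version show that $\leq_{lr}$ is compatible with $\leq_{\Pcal}$, and that inside a layer left (right) multiplication cannot change $B$ ($T$) modulo higher terms.

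The converse does not follow from (AC$_1$)--(AC$_3$). That converse is the claim that each layer $\{c^{\lambda}_{T,m,B}\}$ is a single $J$-cell whose left, right and $H$-cells are obtained by fixing $B$, $T$ and $(T,B)$. Freeness of $\dmod[\lambda]$ and $\modd[\lambda]$ says nothing about $\algebra$ acting transitively on tops or bottoms. Also, the right $\sand[\lambda]$-action on $\dmod[\lambda]$ is not multiplication by elements of $\algebra$, so it produces no relations $\leq_{l}$, $\leq_{r}$ in $(\algebra,\cellbasis)$.

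In fact the claim is false for a general datum. Let $\algebra=\K1\oplus N$ with $N=\K\{e_{11},e_{12},e_{21},e_{22}\}$ and $N^{2}=0$. Take $\Pcal=\{\lambda_0\sandorder\lambda_1\}$, with the layer of $\lambda_0$ equal to $\{1\}$ and the layer of $\lambda_1$ given by $c^{\lambda_1}_{i,1,j}=e_{ij}$, where $\Tcal(\lambda_1)=\Bcal(\lambda_1)=\{1,2\}$ and $\sand[\lambda_1]=\K$. Every $\alpha1+n$ ($\alpha\in\K$, $n\in N$) acts on $N$ by $\alpha$ from both sides, so all axioms hold; this datum is even cellular. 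But $c\,e_{ij}\,d\in\K e_{ij}$ for all basis elements $c,d$. So the layer of $\lambda_1$ consists of four one-element $J$-cells, and $\Tcal(\lambda_1)$ indexes the right cells of none of them.

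Likewise, the trivial datum on $\K[x]/(x^{2})$ (one layer, $\sand[\lambda]=\algebra$ with basis $\{1,x\}$) has $J$-cells $\{1\}$ and $\{x\}$. So the $\sandbasis[\lambda]$-coordinate need not be connected either.

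Thus your hedge ``or a union of $J$-cells'' cannot be completed. You must first replace the datum by one adapted to the cell structure (refine the layers until they are $J$-cells and change the poset accordingly), or assume this outright. Only then do (iii) and your count in (iv) go through; the count presupposes $\hcell=\{c^{\lambda}_{T,m,B}\mid m\in\sandbasis[\lambda]\}$.

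Your plan for (v) also needs a different input. Write $e=c^{\lambda}_{T_0,m_e,B_0}$ (extended linearly in $m$) and $\phi=\phi(B_0,T_0)$. The relation $e^{2}\equiv s(e)e$ only gives $m_e\phi m_e=s(e)m_e$, which does not make $\phi$ invertible; for example, take $\sand[\lambda]=M_2(\K)$ and $\phi=m_e=E_{11}$.

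Use instead the lemma on $H$-cells stated before this result: $s(e)^{-1}e$ is the identity of $\K\hcell_{\lambda}(e)/\jideal$. Applying it requires $\{c^{\lambda}_{T_0,m,B_0}\}$ to be an honest $H$-cell, which is the missing step above. Granting this, $e\,c^{\lambda}_{T_0,m,B_0}\equiv s(e)c^{\lambda}_{T_0,m,B_0}\equiv c^{\lambda}_{T_0,m,B_0}\,e$ for all $m\in\sand[\lambda]$. Taking $m=1$ gives $m_e\phi=s(e)=\phi m_e$, hence $\phi^{-1}=s(e)^{-1}m_e$. Then $c^{\lambda}_{T_0,m,B_0}\mapsto m\phi$ is an algebra isomorphism from $\K\hcell_{\lambda}(e)/\jideal$, whose product is $m\ast m'=m\phi m'$, onto $\sand[\lambda]$.

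Finally, $\algebra^{\lambda}\cong\calg$ is not just the definition. It uses that $\jcell_{\lambda}$ equals the layer, and that $\jideal$ is spanned by the layers above $\lambda$ for the reordered poset of (ii). Under that reordering the ideals become smaller, not coarser as you wrote. (AC$_2$) and (AC$_3$) still survive, because every constituent of a product with a layer-$\lambda$ element lies in a layer $\geq_{lr}\lambda$.
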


\begin{proof}
See \cite[Theorem 2B.19]{Tu-sandwich-cellular}.
\end{proof}

In other words, to any (involutive) sandwich cellular algebra, we can associate a(n involutive) \emph{sandwich pair} $(\algebra,\cellbasis)$. Finally, given the sandwich cell datum described in \autoref{mobcelldatum}, we describe the cells of $\mxmon{\paraa, \parab, \parac} $, cf. \cite[Proposition 4B.10]{Tu-sandwich-cellular}:

\begin{Proposition}\label{P:SandwichStuff}
We have the following for the pair $\big(\mxmon{\paraa, \parab, \parac} ,\mxmon{1}\big)$.
\begin{enumerate}

\item The $J$-cells of $\mxmon{\paraa, \parab, \parac}$
are given by diagrams with a fixed number of through strands $\lambda$.
The $\leq_{lr}$-order is a total order and increases as the number of through strands decreases. See \autoref{Eq:DAlgebrasTable} for a summary.

\item The left (right) cells of $\mxmon{\paraa, \parab, \parac}$ 
are given by diagrams where one fixes 
the bottom (top) of the diagram.
The $\leq_{l}$ and the $\leq_{r}$-order increases as the number of through strands decreases. For $\#\lcell_{\lambda}=\#\rcell_{\lambda}$ see \autoref{Eq:DAlgebrasTable}.

\item Assume that all evaluation parameters are invertible. Then all $J$-cells 
of $\mxmon{\paraa, \parab, \parac}$ are strictly idempotent. The general case is discussed at the end of this section.

\item The pair $\big(\mxmon{\paraa, \parab, \parac} ,\mxmon{1} \big)$ is an involutive sandwich pair,
that for non-planar $\mxmon{1}$ comes neither from a cellular nor an affine 
cellular algebra.

\end{enumerate}
\end{Proposition}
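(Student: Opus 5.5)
The approach is to reduce everything to the undecorated case \cite[Proposition 4B.10]{Tu-sandwich-cellular}, via the factorization $a=\tau\circ\sigma_\lambda^{\mathcal{M}}\circ\beta$ of \autoref{L:DAlgebrasDMonFactor} and the cell datum of \autoref{mobcelldatum}.

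\textbf{Key observation.} Composition never increases the number of through strands. Handle and Möbius dots move freely along components, so they never change connectivity. Hence the underlying undecorated partition diagram of a nonzero term of $cad$ is the undecorated product of the underlying diagrams. Moreover, the handle and Möbius relations only rewrite decorations on a single part, and produce one diagram with the same shape (with coefficient $1$ under \autoref{gencond}). So the preorder computations for $\mxmon{1}$ reduce to those for $\xmon{1}$, plus the question of whether decorations can be created and removed within a cell.

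\textbf{Part (a).} I first show that two diagrams with the same number $\lambda$ of through strands are $\sim_{lr}$. Given $a$ with middle $\sigma_\lambda^{\mathcal{M}}$, I multiply by suitable undecorated tops and bottoms to reach the undecorated diagram with the same shape. The only issue is removing the decorations in the middle. Here I use that $\mathcal{M}$ acts on through strands and, after capping through strands off, that bottoms/tops can absorb decorations onto nonthrough components. Alternatively, I invoke (AC$_2$) together with \autoref{T:SandwichIsSandwich}, which identifies $J$-cells with the poset $\Pcal$ of \autoref{mobcelldatum}.

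Indeed, the cleanest route is to cite \autoref{T:SandwichIsSandwich}(ii)--(iii). The sandwich datum already gives $J$-cells indexed by $\lambda$, left cells indexed by the bottom $\beta\in\Bcal(\lambda)$, and right cells by the top $\tau\in\Tcal(\lambda)$. One must still check that the cell preorder really separates different $\lambda$, and that within fixed $\lambda$ everything is connected. The latter uses that $\mathcal{M}$-elements in the middle can be moved (left multiplication by $\tau'\sigma'$ realizes any other middle up to the monoid structure). Since $\mathcal{M}$ is not a group, I would argue that the $H$-cell is all of $\{\tau\}\times\mathcal{M}\wr\sym[\lambda]\times\{\beta\}$ by going around through caps: place a cap-cup pair to turn a decorated strand into a decorated nonthrough part, then reconnect. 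Totality and direction of the order come from the count of through strands, as in the undecorated case. The sizes in the table are $\#\Tcal(\lambda)\cdot\#\mathcal{M}^{\lambda}\lambda!\cdot\#\Bcal(\lambda)$, where $\#\Tcal(\lambda)$ counts undecorated tops times decorations on nonthrough parts.

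\textbf{Part (b).} This is the same argument, with one side fixed.

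\textbf{Part (c).} Take the undecorated idempotent-up-to-scalar from \cite{Tu-sandwich-cellular}: $e$ has $\lambda$ through strands and the remaining points joined by undecorated singleton or cap components. Then $e^2$ equals a product of closed-component evaluations ($\paraa_0$ powers) times $e$, which is nonzero when parameters are invertible.

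\textbf{Part (d).} Involutivity is \autoref{mobcelldatum}. Non-cellularity follows because the $H$-cells have sandwiched algebra $\K[\mathcal{M}\wr\sym[\lambda]]$. This is neither $\K$ nor commutative for $\lambda\geq2$ in the symmetric case, so the datum is neither cellular nor affine cellular.

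\textbf{Main obstacle.} I expect the hardest step to be showing that within a fixed $\lambda$ all diagrams are $\sim_{lr}$ despite the noninvertible monoid $\mathcal{M}$. Without care, the decorations could split $J$-cells into finer ones. I would first try to exploit caps and counits to transfer decorations off through strands.
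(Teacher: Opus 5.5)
Your argument for (a) and (b) depends on showing that all diagrams with $\lambda$ through strands are $\sim_{lr}$-equivalent, and that an $H$-cell is all of $\{\tau\}\times\mathcal{M}\wr\sym[\lambda]\times\{\beta\}$. That step cannot be carried out: for cell orders defined via the diagram basis it is false, and no cap--cup manoeuvre repairs it.

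Suppose $cad$ has as many through strands as $a$. Then every through block of $a$ lies in its own through block of $cad$. A M{\"o}bius dot on such a block survives, because \autoref{mobmu} and \autoref{mobs} only move dots, \autoref{mobrel} turns $m^3$ into $hm$, and the handle relation only changes handle counts. Capping off a decorated through strand lowers $\lambda$, and reconnecting the resulting nonthrough part to a through strand brings the dot back with it.

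Consequently $1_n\leq_{lr}m\otimes 1_{n-1}$ but not conversely, in every $\mxmon{1}$. The clearest case is $\msym[1]$: there are no caps at all, and $\mxmon[1]{\paraa,\parab,\parac}\cong\K\mathcal{M}$. By \autoref{P:CellStructure} this algebra has several $J$-cells ($1$ and $b$ are never $J$-equivalent), although every basis element has $\lambda=1$. When $K>r$, some of these cells (e.g. $\{b\}$) contain no pseudo-idempotent, so the literal reading would break (c) as well. In general, decorations on through strands refine each $\lambda$-stratum according to the cell structure of $\mathcal{M}\wr\sym[\lambda]$ (resp. $\mathcal{M}^{\lambda}$).

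The statement is true, and the paper's proof works, when the cells are read as the strata of the sandwich datum of \autoref{mobcelldatum}, i.e. in the sense of \autoref{T:SandwichIsSandwich}. In that reading, $\jcell_\lambda$ consists of all diagrams with $\lambda$ through strands, and left and right cells are labelled by $\Bcal(\lambda)$ and $\Tcal(\lambda)$. The whole middle monoid is absorbed into $\sand$ rather than resolved into cells. The paper then proceeds as follows:
\begin{itemize}
\item (a) is declared standard, inherited from \cite[Proposition 4B.10]{Tu-sandwich-cellular}, since dots move freely and never alter the underlying diagram.
\item In (b) only the counts are proved, by decorating the $x$ nonthrough parts in $(3K)^x$ ways; this matches your description of $\#\Tcal(\lambda)$.
\item (c) comes from \autoref{L:Idempotents} and the undecorated idempotents, exactly as in your argument.
\item (d) comes from \cite[Proposition 2E.1]{Tu-sandwich-cellular} applied to $\K\mxmon{1}$ and transported to $\mxmon{\paraa,\parab,\parac}$. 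Your noncommutativity argument for the ``neither cellular nor affine cellular'' clause is fine.
\end{itemize}
Your ``alternative'' through \autoref{T:SandwichIsSandwich} was the viable route. You should drop the requirement that everything with fixed $\lambda$ be $\sim_{lr}$-connected rather than try to patch it.
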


We have the following table:
\begin{gather}\label{Eq:DAlgebrasTable}
\begin{gathered}
\scalebox{0.9}{\begin{tabular}{c||c|c|c}
Monoid & $\Pcal=\apex$ (*)  &  $\#\lcell_{\lambda}$ & $\sand$ \\
\hline
\hline
$\mpamon{1}$ & $\{n{<_{lr}}n{-}1{<_{lr}}\dots{<_{lr}}0\}$  & $\sum_{t=0}^{n}\begin{Bsmallmatrix}n\\t\end{Bsmallmatrix}\binom{t}{\lambda}(3K)^{t-\lambda}$ & $\mathcal{M} \wr\sym[\lambda]$ \\
\hline
$\mppamon{1}$ & $\{n{<_{lr}}n{-}1{<_{lr}}\dots{<_{lr}}0\}$ & $\tfrac{4\lambda+2}{2n+2\lambda+2}\binom{2n}{(2n{-}2\lambda)/2}(3K)^{n-\lambda}$ & $\mathcal{M}^\lambda$ \\
\hline
$\mrobrmon{1}$ & $\{n{<_{lr}}n{-}1{<_{lr}}\dots{<_{lr}}0\}$ & $\sum_{t=0}^{n}\binom{n}{\lambda}\binom{n-\lambda}{2t}(2t-1)!!(3K)^{n-\lambda-t}$ &  $\mathcal{M} \wr \sym[\lambda]$ \\
\hline
$\mmomon{1}$ & $\{n{<_{lr}}n{-}1{<_{lr}}\dots{<_{lr}}0\}$ & $\sum_{t=0}^{n}\tfrac{\lambda+1}{\lambda+t+1}\binom{n}{\lambda+2t}\binom{\lambda+2t}{t}(3K)^{n-\lambda-t}$ & $\mathcal{M}^\lambda$ \\
\hline
$\mbrmon{1}$ & $\{n{<_{lr}}n{-}2{<_{lr}}\dots{<_{lr}}0,1\}$  & $\binom{n}{\lambda}(n-\lambda-1)!!(3K)^{(n-\lambda)/2}$ & $\mathcal{M} \wr \sym[\lambda]$ \\
\hline
$\mtlmon{1}$ & $\{n{<_{lr}}n{-}2{<_{lr}}\dots{<_{lr}}0,1\}$ & $\tfrac{2\lambda+2}{n+\lambda+2}\binom{n}{(n-\lambda)/2}(3K)^{(n-\lambda)/2}$ & $\mathcal{M}^\lambda$ \\
\hline
$\mromon{1}$ & $\{n{<_{lr}}n{-}1{<_{lr}}\dots{<_{lr}}0\}$ & $\binom{n}{\lambda}(3K)^{n-\lambda}$ &  $\mathcal{M} \wr \sym[\lambda]$ \\
\hline
$\mpromon{1}$ & $\{n{<_{lr}}n{-}1{<_{lr}}\dots{<_{lr}}0\}$  & $\binom{n}{\lambda}(3K)^{n-\lambda}$ &  $\mathcal{M}^\lambda$ \\
\hline
$\msym$ & $\{n\}$ &  $1$ & $\mathcal{M} \wr \sym[\lambda]$ \\
\hline
$\mpsym$ & $\{n\}$  & $1$ & $\mathcal{M}^\lambda$ \\
\end{tabular}}
.
\\
\text{(*) This assumes that all evaluation parameters are invertible;} 
\\
\text{the general case is stated at the end of the proof of \autoref{P:SandwichStuff}.}
\end{gathered}
\end{gather}

\begin{proof}
Let $\algebra=\mxmon{\paraa, \parab, \parac}$. 

\textit{(a)}. The result is standard and omitted.

\textit{(b)}. As above, everything is easy to see, except the counts for $\#\lcell_{\lambda}$: since holes and Möbius dots move freely along strands, we need to count the number of possible configurations with dotted non-through strands. Ignoring holes and Möbius dots, using \cite[Proposition 4B.10]{Tu-sandwich-cellular} for the overall counts, together with simple combinatorial arguments, we obtain that part of each count that involves the number of parts not attached to through strands; call this $x$. Since each part can have up to $K-1$ holes and up to two Möbius dots according to the Möbius relation, we must multiply the relevant part of the count by $(3K)^x$.

\begin{Example}
    Let us do the Temperley--Lieb case as an example, say $n=3$, $\lambda=1$ and $K=2$. The monoid has two half diagrams indexing the left cells:
\begin{gather*}
\begin{tikzpicture}[anchorbase]
\draw[usual] (1,0) to[out=90,in=180] (1.25,0.25) to[out=0,in=90] (1.5,0);
\draw[usual] (0.5,0) to (0.5,0.5);
\end{tikzpicture}
,
\quad
\begin{tikzpicture}[anchorbase,xscale=-1]
\draw[usual] (1,0) to[out=90,in=180] (1.25,0.25) to[out=0,in=90] (1.5,0);
\draw[usual] (0.5,0) to (0.5,0.5);
\end{tikzpicture} \quad
\Rightarrow \quad
\text{$(3-1)/2$ non-through components}
.
\end{gather*}
The non-through component can be labeled with $0$ or $1$ handle dot
\begin{gather*}
\begin{tikzpicture}[anchorbase,xscale=-1]
\draw[usual] (1,0) to[out=90,in=180] (1.25,0.25) to[out=0,in=90] (1.5,0);
\end{tikzpicture}
,\quad
\begin{tikzpicture}[anchorbase,xscale=-1]
\draw[usual,hol=0.5] (1,0) to[out=90,in=180] (1.25,0.25) to[out=0,in=90] (1.5,0);
\end{tikzpicture}
.
\end{gather*}
and each of these, in turn, can carry $0$, $1$ or $2$ M{\"o}bius dots, e.g.
\begin{gather*}
\begin{tikzpicture}[anchorbase,xscale=-1]
\draw[usual] (1,0) to[out=90,in=180] (1.25,0.25) to[out=0,in=90] (1.5,0);
\end{tikzpicture}
,\quad
\begin{tikzpicture}[anchorbase,xscale=-1]
\draw[usual,mob=0.5] (1,0) to[out=90,in=180] (1.25,0.25) to[out=0,in=90] (1.5,0);
\end{tikzpicture}
,\quad
\begin{tikzpicture}[anchorbase,xscale=-1]
\draw[usual,mob=0.33,mob=0.66] (1,0) to[out=90,in=180] (1.25,0.25) to[out=0,in=90] (1.5,0);
\end{tikzpicture}
.
\end{gather*}
The overall scaling is therefore $(3\cdot 2)^{(3-1)/2}$. 
\end{Example}

All other diagram algebras 
can be treated verbatim. (For the planar partition case one can use the isomorphism to the Temperley--Lieb case with twice as many strands, see e.g. \cite[(1-5)]{HaRa-partition-algebras} for this isomorphism.)

\textit{(c)}. To see that $\Pcal = \apex$, we use the following lemma.

\begin{Lemma}\label{L:Idempotents}
An $H$-cell is strictly idempotent if and only if it is strictly idempotent when seen as an $H$-cell of the otherwise same algebra but with
\begin{gather*}
\begin{tikzpicture}[anchorbase]
\draw[usual,hol=0.5] (0,0) to (0,0.5) to (0,1);
\end{tikzpicture}
 = \begin{tikzpicture}[anchorbase]
\draw[usual,mob=0.5] (0,0) to (0,0.5) to (0,1);
\end{tikzpicture}
=
\begin{tikzpicture}[anchorbase]
\draw[usual] (0,0) to (0,0.5) to (0,1);
\end{tikzpicture}
.
\end{gather*}
(So, all handle and M{\"o}bius dots disappear.)
\end{Lemma}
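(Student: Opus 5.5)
The plan is to compare products directly through the ``forget the dots'' map. Let $\pi\colon\mxmon{1}\to\xmon{1}$ erase all handle and Möbius dots; call the target algebra $\algebra'$, i.e. $\xmon{\paraa_0}$ with every closed component evaluated to the single scalar obtained by also erasing dots. By \autoref{L:DAlgebrasDMonFactor}, $\pi$ is compatible with the factorization $a=\tau\circ\sigma^{\mathcal{M}}_{\lambda}\circ\beta$: it sends top, middle and bottom to the top, middle and bottom of $\pi(a)$, and it sends $\mathcal{M}\wr\sym[\lambda]$ (or $\mathcal{M}^{\lambda}$) onto $\sym[\lambda]$ (or the trivial group). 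In particular $\pi$ preserves the number of through strands, so it maps $J$-cells, left cells and right cells of $\mxmon{\paraa,\parab,\parac}$ onto those of $\algebra'$ by \autoref{P:SandwichStuff}(a),(b). An $H$-cell $\hcell$, fixed by a top $T$ and a bottom $B$, therefore maps onto the $H$-cell $\hcell'$ given by $\pi(T)$ and $\pi(B)$.

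Next I analyse the product of two diagrams $a,b\in\hcell$ modulo diagrams with fewer through strands. Stacking $a$ on $b$, the bottom of $a$ and the top of $b$ meet in the middle and produce closed components, together with merges of non-through parts into through strands. By \autoref{P:SandwichStuff}, the product $a\circ b$ either lies in $\hcell$ or drops to a strictly higher $J$-cell, and which case occurs depends only on the underlying undecorated partitions. Dots only decorate components; they never change connectivity. Hence $ab\in\hcell$ (mod higher terms) if and only if $\pi(a)\pi(b)\in\hcell'$ (mod higher terms).

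When the product does stay in $\hcell$, its coefficient is a product of closed-component evaluations $\paraa_i,\parab_i,\parac_i$. These are nonzero under the standing invertibility assumption, so the diagram is nonzero modulo higher order terms in both algebras. Using the handle relation in the form $h^{K}=h^{K-r}$, derived in the proof of \autoref{mobcelldatum}, together with the Möbius relation, the product is a single basis diagram times a nonzero scalar. No cancellation can occur.

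Now I show that strict idempotency transfers in each direction.
\begin{itemize}
\item If $e\in\hcell$ satisfies $e^{2}=s\,e$ mod higher terms, then $\pi(e)^{2}$ is a nonzero multiple of $\pi(e)$ by the same connectivity argument.
\item Conversely, suppose $\pi$ of some element of $\hcell$ is a pseudo-idempotent in $\hcell'$. Choose $e\in\hcell$ with undotted top and bottom and the identity middle. Then $e^{2}$ lands in $\hcell$ with middle the identity (all dots end up on closed components), so $e^{2}$ is a nonzero multiple of $e$.
\end{itemize}

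The main obstacle is the converse direction. The subtle point is that closed components formed in the middle carry only the dots already present on the top and bottom halves, so dots never migrate onto the through strands. I will check this against the freeness of dot movement established in the proof of \autoref{L:DAlgebrasDMonFactor}.
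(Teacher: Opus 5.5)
Your forward direction is correct and essentially matches the paper's one-line observation. Whether $e\circ e$ stays in the $J$-cell, and how many closed components it produces, is decided by the undotted shape, so a dotted pseudo-idempotent has a pseudo-idempotent shadow $\pi(e)$.

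The gap is in the converse. An $H$-cell is $\lcell\cap\rcell$, so its top $T$ and bottom $B$ are fixed, and they include the dots on non-through parts; this is the factor $(3K)^{x}$ in $\#\lcell_{\lambda}$ in \autoref{Eq:DAlgebrasTable}. Inside $\hcell$ only the middle in $\mathcal{M}\wr\sym[\lambda]$ (or $\mathcal{M}^{\lambda}$) can be chosen. So you can pick $e\in\hcell$ with undotted top and bottom only when $T=\pi(T)$ and $B=\pi(B)$. That is enough to show every $J$-cell contains an idempotent, but not the lemma for arbitrary $H$-cells.

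Moreover, the ``subtle point'' you plan to verify is false: dots on non-through parts do migrate onto through strands. In $\mtlmon[3]{1}$, let $e$ have a through strand from bottom point $1$ to top point $3$, an undotted bottom arc on $\{2,3\}$, and a top arc on $\{1,2\}$ carrying one M\"obius dot. In $e\circ e$ the through strand runs through the bottom arc of the upper copy and the top arc of the lower copy. Hence $e\circ e$ is $e$ with an extra M\"obius dot on its through strand, which is not a multiple of $e$, even though $\pi(e)$ is an idempotent of the dotless algebra. The free movement of dots from \autoref{L:DAlgebrasDMonFactor} is exactly what slides that dot into the middle.

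To close the gap, choose the middle so that it absorbs the migrating dots. Suppose $\pi(\hcell)$ is strictly idempotent. Then the composite $B\circ T$ appearing in $e\circ e=T\circ\sigma\circ(B\circ T)\circ\sigma\circ B$ keeps all $\lambda$ through strands. In the dotted algebra it equals a nonzero product of evaluations times some $(f;\rho)\in\mathcal{M}\wr\sym[\lambda]$, where $f$ records the migrated dots.

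Let $\varepsilon$ be the identity of the group $\jcell_{2r}$ from \autoref{P:CellStructure}; this group is an ideal of the commutative monoid $\mathcal{M}$. Set $g(i)=\big(f(\rho(i))\varepsilon\big)^{-1}\in\jcell_{2r}$. Then $g(i)f(\rho(i))g(i)=g(i)$, so $(g;\rho^{-1})(f;\rho)(g;\rho^{-1})=(g;\rho^{-1})$. Consequently $e=T\circ(g;\rho^{-1})\circ B\in\hcell$ satisfies $e^{2}=s\,e$ with $s\neq 0$. In the planar case $\rho=1$ and the same argument works componentwise. In the example above with $K=r=1$, this amounts to putting one M\"obius dot on the through strand and using $m^{3}=m$.
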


\begin{proof}
The only thing to observe here is that 
any pseudo idempotent with (handle or M{\"o}bius) 
dots must have an underlying diagram without any dots.
\end{proof}

Note that, by \autoref{L:Idempotents}, we can now ignore holes and Möbius dots, and set $p_\paraa(T)=\paraa_0$. Then there exist idempotents in all $J$-cells by \cite[Proposition 4B.10]{Tu-sandwich-cellular} and \cite[Section 2]{FrStTu-twists}; these idempotents remain in the general case, though with possibly a different normalization factor.

We now describe the full list of apexes. Let $\mathbf{x}$ be either of the set of parameters $\paraa,\parab,\parac$.

\begin{gather}\label{Eq:DAlgebrasTable1}
\begin{gathered}
\scalebox{0.9}{\begin{tabular}{c||c|c}
Monoid & $\apex: x_k=0$ for all $k, \mathbf{x}$ &  $\apex:x_k \neq 0$ for some $k, \mathbf{x}$  \\
\hline
\hline
$\mpamon{1}$ & $\{0,\dots,n\}\setminus\{0\}$  & $\{0,\dots,n\}$   \\
\hline
$\mppamon{1}$ & $\{0,\dots,n\}\setminus\{0\}$  & $\{0,\dots,n\}$   \\
\hline
$\mrobrmon{1}$ & $\{\text{0 or 1},\dots,n-2,n\}\setminus\{0\}$    & $\{0,\dots,n\}$  \\
\hline
$\mmomon{1}$ & $\{\text{0 or 1},\dots,n-2,n\}\setminus\{0\}$   & $\{0,\dots,n\}$   \\
\hline
$\mbrmon{1}$ & $\{\text{0 or 1},\dots,n-2,n\}\setminus\{0\}$ $(\forall k \geq 1)$   & $\{\text{0 or 1},\dots,n-2,n\}$ $(\exists  k \geq 1)$  \\
\hline
$\mtlmon{1}$ & $\{\text{0 or 1},\dots,n-2,n\}\setminus\{0\}$ $(\forall k \geq 1)$  &  $\{\text{0 or 1},\dots,n-2,n\}$ $(\exists k \geq 1)$   \\
\hline
$\mromon{1}$ & $\{n\}$  & $\{0,\dots,n\}$  \\
\hline
$\mpromon{1}$ & $\{n\}$   & $\{0,\dots,n\}$   \\
\hline
$\msym$ & $\{n\}$  &  $\{n\}$  \\
\hline
$\mpsym$ & $\{n\}$  & $\{n\}$  \\
\end{tabular}}
\end{gathered}
\end{gather}

\begin{Lemma}\label{L:Apex}
The set of apexes is as above.
\end{Lemma}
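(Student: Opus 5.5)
By \autoref{T:SandwichCMP} and \autoref{T:SandwichIsSandwich}, a $J$-cell $\jcell_\lambda$ is an apex exactly when it is idempotent, i.e. when its $\K$-span contains a pseudo-idempotent modulo higher order terms. So the plan is to decide, for each $\lambda$ and each monoid in the table, whether some $e\in\K\jcell_\lambda$ satisfies $e^2=s(e)e$ with $s(e)\neq0$.

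The key computation is the following. Take $a=\tau\circ\sigma\circ\beta$ and $b=\tau'\circ\sigma'\circ\beta'$ in $\jcell_\lambda$. In the product $ab$, the middle piece $\beta\circ\tau'$ either loses through strands, and then $ab$ lies in a higher cell, or it keeps $\lambda$ through strands. In the latter case it also produces closed components. Each closed component carries some number $k$ of handle dots and $0$, $1$ or $2$ Möbius dots, and evaluates to $\paraa_k$, $\parab_k$ or $\parac_k$. The dots on such a component can be chosen freely, because the top and bottom sets include decorated non-through parts (see the count $(3K)^x$ in \autoref{P:SandwichStuff}). Hence if some $x_k\neq0$, we place exactly the decoration with $x_k\neq0$ on every non-through part of $\beta$ and $\tau$. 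We then take $e=\tau\circ1\circ\tau^{\ast}$, using the involution $(\placeholder)^{\star}$. Every closed component of $e^2$ then evaluates to $x_k$, so $s(e)$ is a product of copies of $x_k$ and is nonzero. This settles the right column.

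For $\lambda\geq1$ in most families, and for $\lambda$ in the left column, we choose an idempotent-style diagram in which no closed components arise at all. For partition and planar partition diagrams one merges extra points into a through strand. For rook-Brauer and Motzkin one uses singletons (counit/unit), which keep no closed loops when the tops match. Then $e^2=e$ by \autoref{L:Idempotents} and the undecorated case \cite[Proposition 4B.10]{Tu-sandwich-cellular}, independently of the parameters. This gives all $\lambda\geq1$ in the left column.

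For $\lambda=0$ with every $x_k=0$, we argue the reverse. Any product of two elements of $\jcell_0$ (with no through strands) that stays in $\jcell_0$ has at least one closed component whenever there are $n\geq1$ points. So every such product is zero, $\K\jcell_0$ squares to zero, and $0$ is not an apex. This explains the ``$\setminus\{0\}$'' and ``0 or 1'' (parity) entries. The rook cases work the same way, because rook diagrams with fewer than $n$ through strands always create a closed dotted component when composed in-cell.

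The main obstacle is the Brauer and Temperley--Lieb rows. There closed components always contain an even pairing loop, and the evaluations are constrained by the handle relation and the degree conditions \autoref{conds}. Whether some loop with $k\geq1$ handles can be made nonzero needs care, so here I would check the parity and handle counts case by case. The same issue arises for the symmetric group rows, where only $\lambda=n$ occurs.
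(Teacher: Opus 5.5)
Your proposal has genuine gaps, the most serious of which makes it conclude the opposite of the statement in two rows of the table.

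\textbf{Rook-Brauer and Motzkin, left column.} Your argument for $\mrobrmon{1}$ and $\mmomon{1}$ contradicts the statement. The table says that when all $x_k=0$, only the $\lambda\geq1$ with $\lambda\equiv n\pmod 2$ are apexes, whereas you conclude that every $\lambda\geq1$ is. Unlike the partition case, a singleton here cannot be absorbed into a through strand. Take a product $a\circ b$ of two diagrams of $\jcell_\lambda$ and a singleton in the bottom of $a$. At the same position, the top of $b$ has one of three things:
\begin{enumerate}
\item a singleton, giving a closed component, which evaluates to some $x_k=0$;
\item an end of a through strand, which is then cut, so the product lies in $\mathscr{A}^{>_{lr}\lambda}$;
\item an end of a pair, in which case you pass to its other end, look at what the bottom of $a$ has there, and repeat.
\end{enumerate}
Since blocks have size at most two, this alternating chain must end in the first or second way. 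If $n-\lambda$ is odd, the bottom of every diagram in $\jcell_\lambda$ contains a singleton. Hence $\K\jcell_\lambda\cdot\K\jcell_\lambda\subseteq\mathscr{A}^{>_{lr}\lambda}$, and $\lambda\notin\apex$. For $n-\lambda$ even and $\lambda\geq1$, you need the loop-free Temperley--Lieb zigzag instead: arcs at the bottom, arcs at the top, and one shifted through strand. This chain argument is exactly the nontrivial part of the lemma; your rook argument is its special case without pairs. It is this argument, not your $\lambda=0$ argument, that produces the parity pattern ``0 or 1, $\dots$, $n-2$, $n$'' in these two rows. In the Brauer and Temperley--Lieb rows, the parity just reflects that $\jcell_\lambda$ is empty for $\lambda\not\equiv n\pmod 2$.

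\textbf{Right column.} Your construction fails as written. With the decoration on $\tau$, and hence also on $\tau^{\ast}$, each closed component of $e^2$ is $P\cup P^{\ast}$ for a non-through block $P$ of $\tau$. It carries the decoration twice, plus $\#P-1$ handles created by the gluing itself. For a singleton block it therefore evaluates to $\paraa_{2j}$, $\parac_{2j}$, or (via $m^4=hm^2$) $\parac_{2j+1}$, and never to $\parab_k$ or $\paraa_{2j+1}$. In the rook monoid all non-through parts are singletons. So if $\paraa_1$ is the only nonzero parameter (e.g.\ $Z_{\paraa}=(T-T^2)/(1-T)$), your $s(e)$ vanishes for every choice, although the table claims $\apex=\{0,\dots,n\}$. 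The fix, as in the paper, is to decorate only the top part of the basic idempotents: $\eta\circ\epsilon$, and the two-point diagram with one arc at the bottom and one at the top.

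\textbf{Brauer and Temperley--Lieb.} The same fix settles these rows, which you left open, and the issue is not the handle relation. In this calculus, an arc $\epsilon\circ\mu$ closed off by an arc $\Delta\circ\eta$ equals $\epsilon\circ h\circ\eta$. More generally, every closed loop already has genus one. With $j$ extra handles and $0$, $1$ or $2$ M\"obius dots on the top arc only, the loop evaluates to $\paraa_{j+1}$, $\parab_{j+1}$ or $\parac_{j+1}$. Hence, for $n$ even, $0\in\apex$ if and only if $x_k\neq0$ for some $k\geq1$; this is the ``$\exists k\geq1$'' in the table.

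\textbf{Smaller points.} Your sentence that in-cell products ``also produce closed components'' is false. The merge and zigzag idempotents produce none, and the left column rests on exactly that. The symmetric group rows need no discussion: there is only the cell $\lambda=n$, and it contains the identity.
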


\begin{proof}
The following can be used to create idempotents while reducing the number of through strands and without producing internal components, cf. \cite[Theorem 2C.6]{FrStTu-twists}:
\begin{gather*}
\begin{tikzpicture}[anchorbase]
\draw[usual] (0,0) to[out=90,in=90] (0.5,0);
\draw[usual, dot] (0.5,0.5) to (0.5,0.25);
\draw[usual] (0.25,0.15) to[out=90,in=270] (0,0.5);
\end{tikzpicture}
\circ
\begin{tikzpicture}[anchorbase]
\draw[usual] (0,0) to[out=90,in=90] (0.5,0);
\draw[usual, dot] (0.5,0.5) to (0.5,0.25);
\draw[usual] (0.25,0.15) to[out=90,in=270] (0,0.5);
\end{tikzpicture}
=
\begin{tikzpicture}[anchorbase]
\draw[usual] (0,0) to[out=90,in=90] (0.5,0);
\draw[usual, dot] (0.5,0.5) to (0.5,0.25);
\draw[usual] (0.25,0.15) to[out=90,in=270] (0,0.5);
\end{tikzpicture}
,\quad
\begin{tikzpicture}[anchorbase]
\draw[usual] (0,0) to[out=90,in=90] (0.5,0);
\draw[usual] (0.5,0.5) to[out=270,in=270] (1,0.5);
\draw[usual] (1,0) to (0,0.5);
\end{tikzpicture}
\circ
\begin{tikzpicture}[anchorbase]
\draw[usual] (0,0) to[out=90,in=90] (0.5,0);
\draw[usual] (0.5,0.5) to[out=270,in=270] (1,0.5);
\draw[usual] (1,0) to (0,0.5);
\end{tikzpicture}
=
\begin{tikzpicture}[anchorbase]
\draw[usual] (0,0) to[out=90,in=90] (0.5,0);
\draw[usual] (0.5,0.5) to[out=270,in=270] (1,0.5);
\draw[usual] (1,0) to (0,0.5);
\end{tikzpicture}
.
\end{gather*}
In addition, whenever $\xmon{1}$ makes use of  
\begin{gather*}
\begin{tikzpicture}[anchorbase]
\draw[usual, dot] (0,0.6) to (0,0.35);
\draw[usual, dot] (0,0) to (0,0.25);
\end{tikzpicture}
\circ
\begin{tikzpicture}[anchorbase]
\draw[usual, dot] (0,0.6) to (0,0.35);
\draw[usual, dot] (0,0) to (0,0.25);
\end{tikzpicture}
=
\begin{tikzpicture}[anchorbase]
\draw[usual, dot] (0,0.6) to (0,0.35);
\draw[usual, dot] (0,0) to (0,0.25);
\end{tikzpicture}
\hspace{0.5cm} \text{or} \hspace{0.5cm}
\begin{tikzpicture}[anchorbase]
\draw[usual] (0,0) to[out=90,in=90] (0.5,0);
\draw[usual] (0,0.5) to[out=270,in=270] (0.5,0.5);
\end{tikzpicture}
\circ
\begin{tikzpicture}[anchorbase]
\draw[usual] (0,0) to[out=90,in=90] (0.5,0);
\draw[usual] (0,0.5) to[out=270,in=270] (0.5,0.5);
\end{tikzpicture}
=
\begin{tikzpicture}[anchorbase]
\draw[usual] (0,0) to[out=90,in=90] (0.5,0);
\draw[usual] (0,0.5) to[out=270,in=270] (0.5,0.5);
\end{tikzpicture},
\end{gather*}
to create idempotents, simply add the appropriate amount of holes and M{\"o}bius dots to the top component that results in a nonzero multiplication factor \eg\, 
\begin{gather*}
\begin{tikzpicture}[anchorbase]
\draw[usual, hol=0.33, mob=0.66, dot] (0, 1.4) to (0,0.8);
\draw[usual, dot] (0,0.35) to (0,0.6);
\end{tikzpicture}
\circ
\begin{tikzpicture}[anchorbase]
\draw[usual, hol=0.33, mob=0.66, dot] (0, 1.4) to (0,0.8);
\draw[usual, dot] (0,0.35) to (0,0.6);
\end{tikzpicture}
=
\parab_1 \cdot \begin{tikzpicture}[anchorbase]
\draw[usual, hol=0.33, mob=0.66, dot] (0, 1.4) to (0,0.8);
\draw[usual, dot] (0,0.35) to (0,0.6);
\end{tikzpicture},
\hspace{0.5cm}  \hspace{0.5cm}
\begin{tikzpicture}[anchorbase]
\draw[usual] (0,0) to[out=90,in=90] (0.5,0);
\draw[usual, mob=0.5] (0,0.5) to[out=270,in=270] (0.5,0.5);
\end{tikzpicture}
\circ
\begin{tikzpicture}[anchorbase]
\draw[usual] (0,0) to[out=90,in=90] (0.5,0);
\draw[usual, mob=0.5] (0,0.5) to[out=270,in=270] (0.5,0.5);
\end{tikzpicture}
=
\parab_1 \cdot \begin{tikzpicture}[anchorbase]
\draw[usual] (0,0) to[out=90,in=90] (0.5,0);
\draw[usual, mob=0.5] (0,0.5) to[out=270,in=270] (0.5,0.5);
\end{tikzpicture}
\end{gather*}
in the case $\parab_1\neq 0$ \etc

The only difficult part remaining is to prove that it is not possible to construct idempotents for $\{\text{0 or 1},\dots,n-3, n-1\}\setminus\{0\}$ in the case $ \mxmon{1} \in\{\mrobrmon{1},\mmomon{1}\}$ under the conditions $x_k=0$ for all $k, \mathbf{x}$. We first note that such an (undecorated) idempotent must contain \,\begin{tikzpicture}[anchorbase,scale=0.55]
\draw[white] (0,0) to[out=90,in=270] (0,1);
\draw[usual,dot] (0,0.3) to (0,0.6);
\end{tikzpicture}, \,since the number of through strands have been reduced by an odd number from $n$. Furthermore, it is easy to see that the following cannot be used to construct idempotents:
\begin{gather*}
\begin{tikzpicture}[anchorbase]
\draw[usual, dot] (0,0.6) to (0,0.35);
\draw[usual, dot] (0,0) to (0,0.25);
\end{tikzpicture}
, \hspace{0.5cm}
\begin{tikzpicture}[anchorbase]
\draw[usual] (1,0) to[out=90, in=270] (0,0.5);
\draw[usual, dot] (0,0) to (0,0.25);
\node at (0.5,0) {$\ldots$};
\end{tikzpicture}
,\hspace{0.5cm}
\begin{tikzpicture}[anchorbase]
\draw[usual] (-1,0) to[out=90, in=270] (0,0.5);
\draw[usual, dot] (0,0) to (0,0.25);
\node at (-0.5,0) {$\ldots$};
\end{tikzpicture}
.
\end{gather*}
The idempotent must therefore be of the form
\begin{gather*}
\begin{tikzpicture}[anchorbase]
\draw[usual] (0,1) to[out=270, in=270] (1,1);
\draw[usual, dot] (0,0) to (0,0.25);
\node at (0.5,1) {$\ldots$};
\draw[usual] (1,0) to[out=90, in=90] (2,0);
\node at (1.5,0) {$\ldots$};
\draw[usual] (2,1) to[out=270, in=270] (3,1);
\node at (2.5,1) {$\ldots$};
\node at (3,0) {$\ldots$};
\draw[usual, dashed] (0,0) to (0,1);
\end{tikzpicture},
\end{gather*}
or its mirror image about the dashed line. However, this is impossible since $n$ is finite and the sequence must eventually terminate in either a through strand at the top or bottom of the diagram, or \,\begin{tikzpicture}[anchorbase,scale=0.55]
\draw[white] (0,0) to[out=90,in=270] (0,1);
\draw[usual,dot] (0,0.3) to (0,0.6);
\end{tikzpicture}, \,\begin{tikzpicture}[anchorbase,scale=0.55]
\draw[white] (0,0) to[out=90,in=270] (0,1);
\draw[usual,dot] (0,0.6) to (0,0.3);
\end{tikzpicture}, which cannot result in an idempotent. 
\end{proof}

\textit{(d)}. We use the result that given any monoid $\monoid$ (with an antiinvolution) whose $J$-cells are either idempotent or have $H$-cells of size one, we can construct a(n involutive) sandwich pair $(\K\monoid,\monoid)$ \cite[Proposition 2E.1]{Tu-sandwich-cellular}. Taking $\monoid = \mxmon{1}$, the construction of the sandwich pair $(\K\mxmon{1}, \mxmon{1})$ can be pulled over to $\big(\mxmon{\paraa, \parab, \parac},\mxmon{1}\big)$.
\end{proof}


\section{Classification}\label{R:Representations}


We now classify the $\K$-linear representations of $\mxmon{\paraa, \parab, \parac}$, for which we have  \autoref{T:DAlgebrasDiaSimples} below, together with \autoref{T:SandwichCMP}.

Consider the sandwich pair $\big(\mxmon{\paraa, \parab, \parac} ,\mxmon{1}\big)$. The set of apexes $\apex$ for simple 
$\mxmon{\paraa, \parab, \parac}$-modules is given in \autoref{Eq:DAlgebrasTable}.
Set $s= 1+3r$ when $\K=\bar{\K}$ is of characteristic zero, or more generally
\begin{gather}\label{Eq:GenCount}
\begin{gathered}
s=1+N_{\K}(r)+N_{\K}(2r),
\\
\text{where }
N_{\K}(k)=\#\{\text{monic irreducible factors of }x^{m(k)}-1\text{ in }\K[x]\}
\end{gathered}
\end{gather}
and $m(k)$ is defined explicitly by $m(k)=k \text{ if }\mathrm{char}(\K)=0$, or implicitly by $k=p^{e}m(k)$ for some nonnegative integer $e$ and $p=\mathrm{char}(\K)>0,\ p\nmid m(k)$.

Let $p(\lambda)$ denote the number of (integer) partitions of $\lambda$. 

\begin{Theorem}\label{T:DAlgebrasDiaSimples}
Given $\lambda\in\apex$:
\begin{enumerate}[label=(\roman*)]
\item In the non-planar case with $\K=\bar{\K}$ of characteristic zero, there are precisely 
\begin{align*}
\sum_{(\lambda)} p(\lambda_1)\cdots p(\lambda_s)
\end{align*}
simple  
$\mxmon{\paraa, \parab, \parac}$-modules of apex $\lambda$, where the sum is taken over all $s$-tuples $(\lambda_1,  \ldots, \lambda_s)$ of nonnegative integers such that $\sum \lambda_i = \lambda$; 
for general $\K$ this is an upper bound;

\item In the planar case, there are precisely $s^\lambda$ simple  
$\mxmon{\paraa, \parab, \parac}$-modules of apex $\lambda$.
\end{enumerate}
\end{Theorem}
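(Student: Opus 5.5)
By \autoref{T:SandwichCMP} and \autoref{mobcelldatum}, simple $\mxmon{\paraa, \parab, \parac}$-modules of apex $\lambda$ are in bijection with simple modules of the sandwiched algebra $\sand$. Here $\sand$ is $\K[\mathcal{M}\wr\sym[\lambda]]$ in the non-planar case and $\K[\mathcal{M}^\lambda]\cong\K\mathcal{M}^{\otimes\lambda}$ in the planar case. These are finite dimensional, hence Artinian, so the lemma applies. The plan is therefore to classify the simple $\K\mathcal{M}$-modules first and then pass to the wreath product or tensor power.

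\textbf{Step 1: simple $\K\mathcal{M}$-modules.} The monoid $\mathcal{M}=\langle a,b\mid ab=ba,\ ab=b^3,\ a^K=a^{K-r}\rangle$ is commutative, and $\K\mathcal{M}$ is a quotient of $\K[a,b]$. Simple modules are therefore $\K[a,b]/\mathfrak{m}$ for maximal ideals $\mathfrak{m}$ containing $ab-b^3$ and $a^{K-r}(a^r-1)$. I will split according to the image of $b$.
\begin{itemize}
\item If $b\equiv 0$, then $a$ satisfies $a^{K-r}(a^r-1)=0$ and is otherwise free. This gives the factor $a=0$ together with the irreducible factors of $a^r-1$.
\item If $b\neq 0$, then $a=b^2$ and $b^{2(K-r)}(b^{2r}-1)=0$, so $b$ is a root of $b^{2r}-1$.
\end{itemize}

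Combining the cases, I expect to land on the count $s=1+N_\K(r)+N_\K(2r)$ after sorting out overlaps. Those overlaps are $a=0$ in the first case, and roots of $b^{2r}-1$ in the second case corresponding to $a=b^2$ also arising elsewhere. Note that $x^{k}-1=(x^{m(k)}-1)^{p^e}$ in characteristic $p$, so its distinct irreducible factors are those of $x^{m(k)}-1$.

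This bookkeeping is the main obstacle. I must check carefully which maximal ideals coincide so that the total matches $s$ rather than, say, $1+2N_\K(r)+\dots$. Over $\bar\K$ in characteristic zero the count becomes $1+r+2r=1+3r$, which I will use to verify that the case analysis is right. All simples are one-dimensional over $\bar\K$, since $\K\mathcal{M}$ is commutative. Over a general $\K$ they have dimension $\deg$ of the corresponding factor.

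\textbf{Step 2: planar case.} Over a field, simple modules of $\K\mathcal{M}^{\otimes\lambda}$ are quotients of tensor products of simples of the factors. In the split case these are exactly the external tensor products. For the commutative algebra at hand I will argue via maximal ideals of $\K[a_1,b_1,\dots,a_\lambda,b_\lambda]$. This gives $s^\lambda$ simples, which proves (ii). If necessary I will add a remark about splitting over non-closed $\K$.

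\textbf{Step 3: non-planar case.} Over $\K=\bar\K$ of characteristic zero, I will use Clifford theory for wreath products $A\wr\sym[\lambda]=A^{\otimes\lambda}\rtimes\K\sym[\lambda]$ with $A=\K\mathcal{M}$. The semisimple quotient of $A$ is $\K^s$, so $A^{\otimes\lambda}\rtimes\sym[\lambda]$ modulo its radical is $\K^s\wr\sym[\lambda]$. The radical of $A^{\otimes\lambda}$ is $\sym[\lambda]$-stable and generates a nilpotent ideal.

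The simples of $\K^s\wr\sym[\lambda]$ are labeled as follows. First choose an orbit of functions $\{1,\dots,\lambda\}\to\{1,\dots,s\}$, equivalently multiplicities $\lambda_1+\dots+\lambda_s=\lambda$. The stabilizer is then $\sym[\lambda_1]\times\dots\times\sym[\lambda_s]$, whose simples are counted by $p(\lambda_1)\cdots p(\lambda_s)$ in characteristic zero. Summing over the compositions gives (i).

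For general $\K$, the same orbit–stabilizer labelling still surjects onto the simples. Stabilizer simples are at most $p(\lambda_i)$ in number (they are $p$-regular partitions in positive characteristic), and the simples of $A$ are at most $s$. This yields the upper bound.
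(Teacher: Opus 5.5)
Your reduction to the sandwiched algebras via \autoref{T:SandwichCMP} is the paper's. After that your route is different and more economical.

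For $A=\K\mathcal{M}$, the paper computes the $J$-cells of $\mathcal{M}$ (\autoref{P:CellStructure}), finds the maximal subgroups $1$, $\Z/r\Z$, $\Z/2r\Z$, and applies Clifford--Munn--Ponizovskii with \autoref{L:CyclicSimplesCount}. You instead use commutativity and count maximal ideals of $\K[a,b]/(ab-b^3,a^{K-r}(a^r-1))$ directly.

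For the wreath product, the paper counts simples by generalized conjugacy classes (\autoref{gencontheorem}) and classifies these by types in the style of James--Kerber (\autoref{theoremclassmain}, \autoref{typecount}). You factor out the $\sym[\lambda]$-stable nilpotent ideal generated by $\mathrm{rad}(A^{\otimes\lambda})$ and apply Clifford theory to $\K^{X}\rtimes\sym[\lambda]$ with $X=\{1,\dots,s\}^{\lambda}$.

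Your version buys three things. It avoids the lengthy generalized-inverse computations. It labels the simples explicitly by $s$-tuples of partitions. It also works verbatim over every field over which all simple $A$-modules are one-dimensional (exact count in characteristic zero, $p$-regular partitions in characteristic $p$). The paper's argument, by contrast, is tied to $\K=\bar\K$ of characteristic zero.

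Over $\bar\K$ of characteristic zero this proves (i) and (ii), with one correction to Step 1. There are no overlaps to sort out: in the residue field of a maximal ideal either $b=0$ or $b\neq0$. So the count is immediately $[K>r]+N_\K(r)+N_\K(2r)$, where $[K>r]$ is $1$ if $K>r$ and $0$ otherwise.

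That bracket matters. $K=r$ happens whenever $\deg p_\paraa<r$; for instance, in the Gram-matrix section $K=r=1$ and $\mathcal{M}=\{1,b,b^2\}$ has only three simples in characteristic $\neq 2$. In that case $a$ is a unit, the point $a=b=0$ does not exist, and the count is $s-1$. The cell picture in \autoref{P:CellStructure} tacitly assumes $K>r$ as well, so you should state that hypothesis explicitly.

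The genuine gap is in the claims over a general field. Both ``maximal ideals give $s^\lambda$'' in Step 2 and ``the orbit--stabilizer labelling still surjects'' in Step 3 rely on $A/\mathrm{rad}A\cong\K^{s}$. When $x^{m(2r)}-1$ does not split over $\K$, one has $A/\mathrm{rad}A\cong\prod_i L_i$ with some $L_i\supsetneq\K$. Then $L_i\otimes_{\K}L_j$ is a product of several fields, and a stabilizer can act on a residue field by a Galois automorphism.

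Concretely, take $\K=\mathbb{Q}$, $r=3$, $K>3$, so $s=7$. Three of the simples have residue field $L=\mathbb{Q}(\omega)$: one from $a^2+a+1$ with $b=0$, and two from $b^2\pm b+1$.
\begin{itemize}
\item Since $L\otimes_{\mathbb{Q}}L\cong L\times L$, the algebra $A\otimes A$ has $58=s^2+9$ simples, not $s^2$.
\item Similarly, $(L\otimes_{\mathbb{Q}}L)\rtimes\sym[2]\cong L[\mathrm{S}_2]\times\mathrm{Mat}_2(\mathbb{Q})$ has three simples instead of $p(2)=2$.
\item Altogether $\mathbb{Q}[\mathcal{M}\wr\mathrm{S}_2]$ has $41$ simples, against the claimed bound $35$.
\end{itemize}

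Hence (ii) over an arbitrary field, and the upper bound with $s=1+N_\K(r)+N_\K(2r)$, are false and cannot be proved. The splitting remark is a necessary hypothesis, not an optional addendum. The paper's proof has the same hole: it declares (ii) immediate from $H$-reduction and never argues the bound.

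What your method does establish:
\begin{itemize}
\item (ii) and the bound whenever $x^{m(2r)}-1$ splits over $\K$;
\item the bound for arbitrary $\K$ if $s$ is taken to be its value over $\bar\K$, since the number of simple modules of a finite-dimensional algebra can only grow under extension of scalars.
\end{itemize}
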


\begin{Remark}
We impose a restriction on $\K$ in the non-planar cases of \autoref{T:DAlgebrasDiaSimples} arising from our use of generalized conjugacy classes to enumerate the simple representations of monoids. (No such restriction is required in the planar case, where the argument works over an arbitrary field.) The results of \cite{MaQuSt-characters-monoids} should allow this restriction to be removed.
\end{Remark}

The rest of this section is devoted to proving \autoref{T:DAlgebrasDiaSimples}, the main ingredients being $H$-reduction \autoref{T:SandwichCMP}, and the classification of $\mathcal{M}$-simples and $\mathcal{M} \wr \sym[\lambda]$-simples more generally. 

We begin by adapting the general concepts outlined in the previous section to the case of the monoid $\mathcal{M}$, in order to classify its simple representations. In particular, we want to describe the $J$-cells of $\mathcal{M}$, for which there is the following proposition:

\begin{Proposition}\label{P:CellStructure}
In the diagram \autoref{Eq:Cells} below:
\begin{itemize}
\item[(a)] The $J$-cells (equals left, right and $H$-cells) of $\mathcal{M}$ are indicated by boxes (dotted or solid);
\item[(b)] The solid boxes indicate that a $J$-cell contains an idempotent, which is also an identity element with respect to the following $J$-cell group structures:
\begin{itemize}
\item[$\jcell_{1}$:] The trivial group;
\item[$\jcell_{r}$:] $\mathbb{Z}/r\mathbb{Z}$, the cyclic group of order $r$;
\item[$\jcell_{2r}$:] $\mathbb{Z}/2r\mathbb{Z}$, the cyclic group of order $2r$.
\end{itemize}
\item[(c)] The dashed arrows give the preordering on the $J$-cells.
\end{itemize}

\begin{gather}\label{Eq:Cells}
\begin{tikzpicture}
[box/.style = {draw,inner sep=5pt},anchorbase]
\node[rectangle, draw] (1) at (0,0) {$1$};
\node[rectangle, draw, dotted, thick] (2) at (2.5,0)  {$b$};
\node[rectangle, draw, dotted, thick] (3) at (5,0){$b^2$};
\node[rectangle, draw, dotted, thick] (4) at (0,1) {$a$};
\node[rectangle, draw, dotted, thick] (5) at (2.5,1)  {$ab$};
\node[rectangle, draw, dotted, thick] (6) at (5,1){$ab^2$};
\node (7) at (0,2){$\vdots$};
\node (8) at (2.5,2){$\vdots$};
\node (9) at (5,2){$\vdots$};
\node[rectangle, draw, dotted, thick] (10) at (0,3) {$a^{K-r-1}$};
\node[rectangle, draw, dotted, thick] (11) at (2.5,3)  {$a^{K-r-1}b$};
\node[rectangle, draw, dotted, thick] (12) at (5,3){$a^{K-r-1}b^2$};
\node (13) at (0,4.5){$a^{K-r}$};
\node (14) at (2.5,4.5){$a^{K-r}b$};
\node (15) at (5,4.5){$a^{K-r}b^2$};
\node (16) at (0,5){$\vdots$};
\node (17) at (2.5,5){$\vdots$};
\node (18) at (5,5){$\vdots$};
\node (19) at (0,5.5){$a^{K-1}$};
\node (20) at (2.5,5.5){$a^{K-1}b$};
\node (21) at (5,5.5){$a^{K-1}b^2$};
\node[box, name= box2, rotate fit=0,fit=(14)(15)(17)(18)(20)(21)] {};
\node[box,name = box, rotate fit=0,fit=(13)(16)(19)] {};
\node (22) at (-1,0){$\jcell_{1}$};
\node (23) at (-1.5,5){$\jcell_{r}$};
\node (23) at (6.5,5){$\jcell_{2r}$};
\draw [->,dashed] (1.east) -- (2.west);
\draw [->,dashed] (2.east) -- (3.west);
\draw [->,dashed] (4.east) -- (5.west);
\draw [->,dashed] (5.east) -- (6.west);
\draw [->,dashed] (10.east) -- (11.west);
\draw [->,dashed] (11.east) -- (12.west);
\draw [->,dashed] (box.east) -- (box2.west);
\draw [->,dashed] (1.north) -- (4.south);
\draw [->,dashed] (2.north) -- (5.south);
\draw [->,dashed] (3.north) -- (6.south);
\draw [->,dashed] (4.north) -- (7.south);
\draw [->,dashed] (5.north) -- (8.south);
\draw [->,dashed] (6.north) -- (9.south);
\draw [->,dashed] (7.north) -- (10.south);
\draw [->,dashed] (8.north) -- (11.south);
\draw [->,dashed] (9.north) -- (12.south);
\draw [->,dashed] (10.north) -- (box.south);
\draw [->,dashed] (11.north) -- (box2);
\draw [->,dashed] (12.north) -- (box2);
\end{tikzpicture}.
\end{gather}
\end{Proposition}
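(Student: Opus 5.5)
Since $\mathcal{M}=\langle a,b\mid ab=ba,\ ab=b^3,\ a^K=a^{K-r}\rangle$ is commutative, left, right, two-sided and $H$-cells all coincide, and $x\leq_{lr}y$ just means $y\in \mathcal{M}x$. The plan is to first fix a normal form, then read off the cells from divisibility, and finally identify the groups.

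\emph{Normal form.} The relation $b^3=ab$ lets us reduce any $b$-exponent $\geq 3$ by two at the cost of one factor $a$. The relation $a^K=a^{K-r}$ reduces $a$-exponents modulo $r$ once they reach $K-r$. So every element has the form $a^ib^j$ with $0\leq i\leq K-1$ and $j\in\{0,1,2\}$. I would prove these $3K$ words are pairwise distinct by exhibiting a concrete model: pairs $(i,j)$ with multiplication $(i,j)(i',j')=$ reduce$(i+i',j+j')$. One checks this is associative and satisfies the relations, which is routine since the reductions commute. This agrees with the count $3K$ already used in \autoref{Eq:DAlgebrasTable}.

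\emph{Cells and order.} Multiplying by $a$ or $b$ moves $a^ib^j$ along the dashed arrows. An element is $\mathcal{M}$-equivalent to another only if each divides the other. For $i<K-r$, exponents can only increase, so these elements form singleton cells: the dotted boxes. They contain no idempotent, since $(a^ib^j)^2=a^ib^j$ would force $2i\equiv i$ and $2j\equiv j$ in the normal form, which fails except at $i=j=0$. For $i\geq K-r$ the set $\{a^i : K-r\leq i\leq K-1\}$ is a cyclic group of order $r$, because $a^r$ acts as the identity on it. Its identity is the unique $a^e$ with $e\equiv 0 \pmod r$ in that range; here one uses that $r$ is odd and at most $K$.

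For the remaining elements, with $j\in\{1,2\}$ and $i\geq K-r$, note that $b^2\cdot b^2=ab^2$. Thus $b^2$ acts like $a$ on the set $\{a^ib, a^ib^2\}$, and $b$ is a square root of the shift in that set. Hence $b$ generates a cyclic group of order $2r$ there: the $2r$ elements $a^ib^j$ correspond to $i+j/2$ read modulo $r$, with half-steps. Since multiplication by $b$ sends the $j=0$ block into the $j\geq1$ block but never back, we get $\jcell_r\to\jcell_{2r}$. The identity of $\jcell_{2r}$ is $e\,b^2 a^{-1}$, interpreted suitably, i.e. the element $a^{i}b^2$ with $2i+2\equiv 0\pmod{2r}$ suitably normalized. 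Writing down this idempotent explicitly is the one fiddly step. I expect the main obstacle to be checking carefully that $b$ and $b^2$ really act bijectively on the top block, so that it is a single $J$-cell of size $2r$ and not two cells of size $r$. I would verify this by computing $b^{2r}\cdot x = a^r x = x$ there, and showing that odd powers of $b$ swap the $j=1$ and $j=2$ rows.
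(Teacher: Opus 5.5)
\textbf{The failing step: part (c).} Your justification of (c), ``multiplying by $a$ or $b$ moves $a^ib^j$ along the dashed arrows'', is false on the third column. We have $b\cdot a^ib^2=a^ib^3=a^{i+1}b$, which is a diagonal move back to the second column, one row up.

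Concretely, $b^2<_{lr}ab$ because $ab=b\cdot b^2$. But as soon as $K-r\geq 2$ there is no dashed path from $b^2$ to $ab$, so the diagram presents two comparable cells as incomparable. On $b$-words, multiplying by $a^{i'}b^{j'}$ is the same as multiplying by $b^{2i'+j'}$. Hence the $b$-words below row $K-r$ form the chain $b<b^2<ab<ab^2<a^2b<\cdots$, ordered by the weight $2i+j$, and the grid needs the extra arrows $a^ib^2\to a^{i+1}b$.

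The paper's proof of (c) makes exactly the same claim (``multiplication by $b$ results in a horizontal progression''). So what is needed is a corrected statement, not a cleverer proof. The same oversight is behind your phrase ``exponents can only increase'': $b\cdot b^2=ab$ lowers the $b$-exponent. What strictly increases before any $a^K$-reduction is $2i+j$ (equivalently $(i,j)$ lexicographically), and that is the invariant your singleton-cell argument should use.

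\textbf{Parts (a) and (b).} Apart from this, your treatment of (a) and (b) is correct and follows the paper's route, and in places it is tighter. Your concrete model actually proves that the $3K$ normal forms are distinct, whereas the paper only records that they span. The map $a^ib^j\mapsto 2i+j \bmod 2r$ identifies $\jcell_{2r}$ with $\mathbb{Z}/2r\mathbb{Z}$ more transparently than tracing the cycle of $m_{2r}$. Your idempotent $a^ib^2$ with $i\equiv -1\pmod r$ is the paper's $a^{K-r+\rho'}b^2$.

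Two smaller corrections. First, $b$ itself is not in $\jcell_{2r}$ when $K>r$. What you show is that multiplication by $b$ acts as a $2r$-cycle there; a generator of the group is $a^ib$ with $i\equiv 0\pmod r$, which is the paper's $m_{2r}$. Second, oddness of $r$ plays no role in this argument.

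\textbf{The case $K=r$.} Your own argument shows that for $K=r$ one has $1=a^{K-r}\in\jcell_r$. So $\jcell_1$ and $\jcell_r$ merge, and this cell is $\mathbb{Z}/r\mathbb{Z}$, not trivial, if $r>1$; meanwhile $b$ and $b^2$ already lie in $\jcell_{2r}$. The diagram tacitly assumes $K>r$, but the paper does use $K=r$ (e.g.\ $K=r=1$ in \autoref{roexp}). You should state that assumption or treat this case separately.
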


\begin{proof}
\textit{(a)}. We begin by observing that every element of the commutative monoid $\mathcal{M}$ can be written in the form $a^ib^j$ with $0 \leq i<K$, $0 \leq j\leq 2$, so that every element is included in the diagram. Moreover, $J$-cells are left and right cells.

Next, for any given element $a^kb^\ell$ with $0\leq k < K-r$, $0 \leq \ell \leq 2$, there is no element in $\mathcal{M}$ that can be multiplied to reduce the power of $a$: repeated multiplication by $a$ either increases the power of $a$ up to $K-1$, and beyond which the power is always at least $K-r$ since $a^K=a^{K-r}$; repeated multiplication by $b$, on the other hand, by the relation $ab=b^3$, either increases the power of $b$, or $a$ and we are back in the previous case. Therefore, the rows of the diagram below the $(K-r)th$ make up separate $J$-cells. Since it is also not possible to reduce the power of $b$ while maintaining the power of $a$, the corresponding columns also lie in separate $J$-cells. 

We now turn to the elements lying in or above the $(K-r)th$ row. Firstly, nonzero powers of $b$ can never be reduced to a zero power, and therefore, the last two columns must comprise separate $J$-cells to the first. Given any two elements $a^k$, $a^{k'}$, $K-r \leq k < k' < K$, we clearly have $a^{k'-k} \cdot a^{k}=a^{k'}$; on the other hand, 
$a^{(K-k')+[k-(K-r)]} \cdot a^{k'}=a^{K+[k-(K-r)]}=a^{(K-r) +[k-(K-r)]}=a^k$, and so the first column forms a single $J$-cell. 

The last two columns, call them $c$ and $c'$ respectively, must also each lie in a single $J$-cell by the previous argument. Clearly, multiplying an element $c$ gives an element in $c'$. Multiplying an element in $c'$ by $b$ results in an element with the power of $b$ reduced by one and $a$ at least $K-r$ \ie\, an element in $c$. Therefore, $c$ and $c'$ form a single $J$-cell.

\textit{(b)}. The case for $\jcell_{1}$ is clear. Set $\rho \equiv -K\mod r$, $0 \leq \rho < r$; in other words, $\rho +K= qr$ for some integer $q$ where $qr \geq K>0$ and hence $q >0$ since $r>0$. We also have that $a^{K-r+\delta r}= a^{K-r}$ for any integer $\delta \geq 0$ by induction. Notice that $a^{K-r + \rho}\cdot a^{K-r + \rho}= a^{(K-r+\rho)+(-r+qr)}= a^{(K+\rho)+(-r+qr)}= a^{K-r + \rho}$ is an idempotent in $\jcell_{r}$. Now set $m_r = a^{K-r + \rho+1}$. It is relatively straightforward to see that $m_r$ generates all $r$ elements in $\jcell_{r}$ since $a^{K-r + \delta} \cdot a^{K-r + \rho+1}= a^{(K-r+\delta)+(-r+qr+1)}=a^{(K+ \delta)+(-r+qr+1)}= a^{K-r+\delta+1}$ for any integer $\delta \geq 0$.

Considering $\jcell_{2r}$, first we set $\rho' \equiv -K -1\mod r$, $0 \leq \rho' < r$ so that $\rho' +K+1=q'r$ for some $q'>0$ as above. It then follows that $a^{K-r + \rho'}b^2\cdot a^{K-r + \rho'}b^2= a^{(K-r+\rho')+(-r+q'r-1)}\cdot ab^2= a^{K-r + \rho'}b^2$ is an idempotent in $\jcell_{2r}$. Set $m_{2r}=a^{K-r+\rho}b$ with $\rho$ as above. Since $a^{K-r+\rho}$ acts as the identity in $\set{a^{K-r}, \dots, a^{K-1}}$, repeated multiplications of $m_{2r}$ by itself results in the following $2r$ cycle:
\begin{gather*}
\begin{tikzpicture}
[box/.style = {draw,inner sep=5pt},anchorbase]
\node (14) at (0,0){$a^{K-r}b$};
\node (15) at (3,0){$a^{K-r}b^2$};
\node (17) at (0,4){$\vdots$};
\node (18) at (3,4){$\vdots$};
\node (19) at (0,1){$\vdots$};
\node (20) at (3,1){$\vdots$};
\node (20a) at (0,5){$a^{K-1}b$};
\node (21) at (3,5){$a^{K-1}b^2$};
\node (22) at (0,2){$m_{2r} \coloneqq a^{K-r+\rho}b$};
\node (23) at (3,2){$a^{K-r+\rho}b^2$};
\node (24) at (0,3){$a^{K-r+\rho+1}b$};
\node (25) at (3,3){$a^{K-r+\rho+1}b^2$};
\node (26) at (4.5,5){$\longrightarrow *$};
\node (27) at (-0.8,0){$*$};
\node (28) at (-2.5,2){\textit{Start} $\longrightarrow$};
\draw [->] (14.east) -- (15.west);
\draw [->] (15.north) -- (19.south);
\draw [->] (20.north) -- (22.south);
\draw [->] (22.east) -- (23.west);
\draw [->] (23.north) -- (24.south);
\draw [->] (24.east) -- (25.west);
\draw [->] (25.north) -- (17.south);
\draw [->] (18.north) -- (20a.south);
\draw [->] (20a.east) -- (21.west);
\end{tikzpicture}.
\end{gather*}
Finally, using similar arguments to \textit{(a)}, it is straightforward to see there are no other idempotent $J$-cells.

\textit{(c)}. Multiplication by $b$ results in a horizontal progression through $J$-cells, whereas multiplication by $a$ results in a vertical progression. Putting the two together then gives the result.
\end{proof}

Using a version of $H$-reduction for monoids, also referred to as the \emph{Clifford-Munn-Ponizovskiǐ theorem} (for details, we refer the reader to \cite[Theorem 5.5]{St-rep-monoid} in particular, or \cite{GaMaSt-irreps-semigroups}), we conclude that:
\begin{gather*}
\begin{gathered}
\left\{
\text{simple $\K \mathcal{M}$-modules}
\right\}/\cong
\\
\xleftrightarrow{1:1}
\\
\left\{
\text{simple $\K \jcell_{1}$-, $\K \jcell_{r}$-, or $\K \jcell_{2r}$-modules}
\right\}/\cong
.
\end{gathered}
\end{gather*}
In other words, there are $s$ simple $\K\mathcal{M}$-modules, with $s$ as in \autoref{Eq:GenCount}, by the following (well-known) result \autoref{L:CyclicSimplesCount}:

\begin{Lemma}\label{L:CyclicSimplesCount}
The number of isomorphism classes of simple $\K[\mathbb Z/k\mathbb Z]$-modules equals
$N_{\K}(k)$.
\end{Lemma}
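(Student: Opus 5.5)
The plan is to identify the group algebra $\K[\mathbb Z/k\mathbb Z]$ with a polynomial quotient ring and then count its simple modules as maximal ideals. Sending a generator $g$ to $x$ gives $\K[\mathbb Z/k\mathbb Z]\cong \K[x]/(x^k-1)$. This algebra is commutative and finite dimensional, so every simple module is one dimensional over its endomorphism field. More precisely, every simple module is isomorphic to $\K[x]/\mathfrak m$ for a maximal ideal $\mathfrak m\supseteq (x^k-1)$, and distinct maximal ideals give non-isomorphic simples because they are distinct annihilators. Hence the number of simples equals the number of maximal ideals of $\K[x]/(x^k-1)$. Since $\K[x]$ is a PID, this is the number of distinct monic irreducible factors of $x^k-1$ in $\K[x]$.

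It remains to match this count with $N_{\K}(k)$, which by definition counts the monic irreducible factors of $x^{m(k)}-1$.

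In characteristic zero we have $m(k)=k$, so there is nothing to do.

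In characteristic $p>0$, write $k=p^e m(k)$ with $p\nmid m(k)$. Frobenius is additive in characteristic $p$, so $x^k-1=(x^{m(k)}-1)^{p^e}$. Thus $x^k-1$ and $x^{m(k)}-1$ have the same set of distinct irreducible factors.

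If one wants to see that $N_{\K}(k)$ really counts distinct factors, note that $x^{m(k)}-1$ is separable. Its derivative $m(k)x^{m(k)-1}$ is coprime to it because $m(k)$ is invertible in $\K$. So, counted with or without multiplicity, the number of factors is the same, and it agrees with the count for $x^k-1$ with multiplicities removed.

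The only point needing care is this multiplicity/separability issue in positive characteristic; everything else is routine. The case $k=0$ does not arise, since $r$ is positive here. Alternatively, one can phrase the whole argument via the Chinese remainder theorem: $\K[x]/(x^{m(k)}-1)\cong\prod_i \K[x]/(f_i)$ is semisimple, and $\K[\mathbb Z/k\mathbb Z]$ has it as its quotient by the nilpotent Jacobson radical.
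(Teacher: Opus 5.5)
Your proof is correct, but it handles the $p$-part by a different route than the paper. The paper first splits the group as $\mathbb Z/k\mathbb Z\cong\mathbb Z/p^{e}\mathbb Z\times\mathbb Z/m(k)\mathbb Z$, so that $\K[\mathbb Z/k\mathbb Z]\cong\K[\mathbb Z/p^{e}\mathbb Z]\otimes_{\K}\K[\mathbb Z/m(k)\mathbb Z]$. It then argues that the first factor is local with only the trivial simple module, so the simples are those of $\K[\mathbb Z/m(k)\mathbb Z]\cong\K[x]/(x^{m(k)}-1)$; only at that point does it pass to polynomials. You pass to $\K[x]/(x^{k}-1)$ at once and remove the $p$-part with the Frobenius identity $x^{k}-1=(x^{m(k)}-1)^{p^{e}}$.

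Your version is more self-contained. The paper's step ``the simples of $A\otimes_{\K}B$ with $A$ local are the simples of $B$'' tacitly needs $A$ to have nilpotent radical and residue field $\K$. That holds here, since $A\cong\K[y]/((y-1)^{p^{e}})$, so $\mathrm{rad}(A)\otimes_{\K}B$ is a nilpotent ideal with quotient $B$, but the paper never says so. You never have to address this. You also make explicit that $x^{m(k)}-1$ is separable, which is what makes ``number of monic irreducible factors'' unambiguous and equal to the number of maximal ideals; the paper leaves this implicit.

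The paper's route is more structural in return. It isolates the general fact that a $p$-group factor, or more generally a normal $p$-subgroup, is invisible to simple modules in characteristic $p$. That fact extends beyond cyclic groups, whereas your argument is tied to the single polynomial $x^{k}-1$.
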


\begin{proof}
Since $\mathbb Z/k\mathbb Z\cong \mathbb Z/p^{a}\mathbb Z\times \mathbb Z/m(k)\mathbb Z$, we have
$\K[\mathbb Z/k\mathbb Z]\cong \K[\mathbb Z/p^{a}\mathbb Z]\otimes_{\K}\K[\mathbb Z/m(k)\mathbb Z]$.
Since the algebra $\K[\mathbb Z/p^{a}\mathbb Z]$ is local, and therefore has a unique simple module (the trivial one),
the simples of $\K[\mathbb Z/k\mathbb Z]$ are exactly the simples of $\K[\mathbb Z/m(k)\mathbb Z]$.
Finally $\K[\mathbb Z/m(k)\mathbb Z]\cong \K[x]/(x^{m(k)}-1)$, and its simple modules correspond to the irreducible
factors of $x^{m(k)}-1$.
\end{proof}

We now turn our attention to the classification of the monoid wreath product $\mathcal{M} \wr \sym[\lambda]$. For this, we mainly follow \cite[Chapter 4]{JaKe-reptheory-symmetric-group}, describing representations of $G \wr \sym[\lambda]$ given a group $G$, adapted to the monoid case using \cite{St-rep-monoid}. 

Fix a finite monoid $M$. We begin with the basic definition:

\begin{Definition}
The \emph{(monoid) wreath product} is given by the set 

\begin{align*}
M \wr \sym[\lambda] \coloneqq \set{(f; \pi) | f: \set{1, \ldots, n} \rightarrow M, \pi \in \sym[\lambda]},
\end{align*}
together with the binary operation
\begin{align*}
(f; \pi)(f'; \pi') = (ff'_\pi; \pi \pi')
\end{align*}
where 
\begin{align*}
f_\pi \coloneqq f \circ \pi^{-1} \in M^n\end{align*}
and 
\begin{align*}
(f f')(i) = f(i) \cdot f'(i), \, i \in \set{1, \ldots, n}\end{align*}
defines multiplication on $M^n$.
\end{Definition}

It is straightforward to show that $M\wr \sym[\lambda]$ is indeed a monoid with identity element $(e; 1_{\sym[\lambda]})$ where $e(i) \coloneqq 1_M$ for all $i \in \set{1, \ldots, n}$, and also that 
\begin{align}
(f_\pi)_{\pi'}=f_{\pi' \pi}.
\end{align}

Given the notion of a \emph{generalized conjugacy class} of $M$, which we describe below, there is the following crucial fact:

\begin{Lemma}\label{gencontheorem}
Assume that $\K=\bar{\K}$ is of characteristic zero.
The number of isomorphism classes of simple $\K M$-modules coincides with the number of generalized conjugacy classes of $M$.
\end{Lemma}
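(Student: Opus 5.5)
The plan is to reduce the count to finite groups via the Clifford--Munn--Ponizovski\u{\i} theorem already used above, following \cite{St-rep-monoid}. First I recall the notion of generalized conjugacy class. For $m\in M$ let $m^{\omega}$ be the unique idempotent power of $m$, which exists since $M$ is finite. Then $m^{\omega+1}:=m^{\omega}m$ lies in the maximal subgroup $G_{m^{\omega}}$, the group of units of $m^{\omega}Mm^{\omega}$.

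Call $m,n\in M$ \emph{generalized conjugate} if there exist $x\in m^{\omega}Mn^{\omega}$ and $y\in n^{\omega}Mm^{\omega}$ with
\begin{gather*}
xy=m^{\omega+1},\qquad yx=n^{\omega+1}.
\end{gather*}
The transitive closure of this relation defines the generalized conjugacy classes. The argument then has three steps.

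\textbf{Step 1.} By \cite[Theorem 5.5]{St-rep-monoid}, simple $\K M$-modules are in bijection with pairs $(\jcell,V)$. Here $\jcell$ runs over the idempotent (regular) $J$-cells of $M$, with a chosen idempotent $e\in\jcell$, and $V$ runs over simple $\K G_e$-modules up to isomorphism. The group $G_e$ depends on the choice of $e$ only up to isomorphism, because idempotents in one $J$-cell have isomorphic maximal subgroups by Green's lemma.

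\textbf{Step 2.} Since $\K=\bar{\K}$ has characteristic zero, Maschke's theorem and Artin--Wedderburn show that the number of simple $\K G_e$-modules equals the dimension of the center of $\K G_e$. This equals the number of conjugacy classes of $G_e$. Hence the number of simple $\K M$-modules is $\sum_{\jcell}\#\{\text{conjugacy classes of }G_{e_{\jcell}}\}$.

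\textbf{Step 3.} It remains to biject generalized conjugacy classes of $M$ with $\coprod_{\jcell}\{\text{conjugacy classes of }G_{e_{\jcell}}\}$. I would send $m$ to the $G_e$-conjugacy class of a transport of $m^{\omega+1}$ into $G_e$, where $e$ is the chosen idempotent in the $J$-cell of $m^{\omega}$. The transport is made as follows. If $f=m^{\omega}$ and $e$ lie in the same $J$-cell, pick $u\in eMf$ and $v\in fMe$ with $uv=e$ and $vu=f$, which exist in finite monoids by Green's lemma. Then $g\mapsto ugv$ is an isomorphism $G_f\to G_e$.

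For this map I need four checks.
\begin{enumerate}[label=(\roman*)]
\item The class obtained is independent of $u,v$. Two such choices differ by conjugation by an element of $G_e$.
\item Generalized conjugate elements map to the same class. If $xy=m^{\omega+1}$ and $yx=n^{\omega+1}$, then $m^{\omega}$ and $n^{\omega}$ are $J$-equivalent, and $x,y$ can be adjusted by inverse powers in the groups to give the transport maps. Concretely, $x':=x(n^{\omega+1})^{-1}$ and $y$ satisfy $x'y=m^{\omega}$ and $yx'=n^{\omega}$ up to group elements, so that $n^{\omega+1}=y\,m^{\omega+1}x'$ conjugates correctly.
\item The map is surjective: any $g\in G_e$ is hit by $m=g$, since $g^{\omega}=e$.
\item The map is injective. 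If the transported elements are conjugate in $G_e$, compose the transports and the conjugating element to produce $x,y$ witnessing generalized conjugacy.
\end{enumerate}

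I expect Step 3, and specifically the well-definedness check (ii), to be the main obstacle. The delicate point is showing that an arbitrary witness pair $(x,y)$ can be converted into $J$-cell transport maps between $m^{\omega}$ and $n^{\omega}$, which requires the finiteness and stability properties of $J$-cells. If this gets messy, the fallback is to cite the character-theoretic version in \cite{St-rep-monoid}. There the number of simples is shown to equal the rank of the space of class functions, namely functions constant on generalized conjugacy classes. Linear independence of the irreducible characters, valid in characteristic zero, then gives the equality directly.
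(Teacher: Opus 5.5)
Your proposal is correct. The paper gives no argument of its own here; it only cites \cite[Theorem 7.10]{St-rep-monoid}. You instead prove the count directly, in three pieces:
\begin{enumerate}
\item Clifford--Munn--Ponizovski\u{\i} reduces the problem to the maximal subgroups $G_e$ of the regular $\mathcal{J}$-classes.
\item Maschke and Artin--Wedderburn for $\K G_e$ are the only place where $\K=\bar{\K}$ and characteristic zero enter.
\item An explicit bijection between generalized conjugacy classes of $M$ and $\coprod_{\mathcal{J}}\{\text{conjugacy classes of }G_{e_{\mathcal{J}}}\}$ forms the combinatorial core.
\end{enumerate}
The four checks (i)--(iv) all go through as you sketch them. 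The citation buys brevity. Your version is self-contained and makes the role of the hypotheses on $\K$ transparent. Since Steps 1 and 3 do not depend on the field, and a group algebra never has more simple modules than the group has conjugacy classes, it also shows something extra: over an arbitrary field, the number of generalized conjugacy classes is an upper bound for the number of simple $\K M$-modules.

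Two points to tighten. First, your relation is not literally the paper's, which asks for $xx'x=x$, $x'xx'=x'$, $x'x=m^{\omega}$, $xx'=n^{\omega}$ and $xm^{\omega+1}x'=n^{\omega+1}$. You should record that the one-step relations agree:
\begin{enumerate}
\item From the paper's witnesses $(x,x')$, the pair $(x',\,xm^{\omega+1})$ witnesses your relation.
\item Conversely, your $(x,y)$ gives the paper's witnesses $(y,\,x(n^{\omega+1})^{-1})$, with the inverse taken in $G_{n^{\omega}}$.
\end{enumerate}
In particular no transitive closure is needed, as (ii) and (iv) together also show.

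Second, step (ii), which you expect to be the main obstacle, is short. The identity $x(yx)=(xy)x$ reads $x\,n^{\omega+1}=m^{\omega+1}x$, hence $x':=x(n^{\omega+1})^{-1}=(m^{\omega+1})^{-1}x$. This gives $x'y=m^{\omega}$ and $yx'=n^{\omega}$ on the nose, not just up to group elements, together with $y\,m^{\omega+1}x'=n^{\omega+1}$. So $(y,x')$ is itself a transport pair carrying $m^{\omega+1}$ to $n^{\omega+1}$.
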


\begin{proof}
Well-known, cf. \cite[Theorem 7.10]{St-rep-monoid}.
\end{proof}

In order to describe the generalized conjugacy classes, we first collect some properties of finite monoids (or semigroups more generally). To begin, fix a finite semigroup $S$ and let $s \in S$. It is straightforward to see that there exists a smallest positive integer $c$, the \emph{index} of $s$, such that $s^c = s^{c +d}$ for some $d >0$; the smallest such $d$ is called the \emph{period} of $s$.

\begin{Lemma} \label{semigroupstuff}
Given $s \in S$ with index $c$ and period $d$, we have the following:
\begin{enumerate}[label=(\roman*)]
\item $s^i = s^j$ if and only if $i = j$ or $i, j \geq c$ and $i \equiv j \mod d$.
\item The subsemigroup $C = \set{s^n | n \geq c}$ of $\langle c \rangle$ is a cyclic group of order $d$. The identity of $C$ is the unique idempotent in $\langle c \rangle$ and is given by $s^m$ where $m \geq c$ and $m \equiv 0 \mod d$; denote this element generically by $s^\omega$. If $s^{\omega +1} \coloneqq s^\omega s$, then $C = \langle s^{\omega +1} \rangle$.
\item There exists a positive integer $a$ such that $s^a=s^\omega$ for all $s \in S$.
\end{enumerate}
\end{Lemma}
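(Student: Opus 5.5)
The plan is to work entirely inside the cyclic subsemigroup $\langle s\rangle=\set{s^n|n\geq 1}$. Finiteness of $S$ guarantees the existence of the index $c$ and period $d$, and everything follows from the single relation $s^c=s^{c+d}$.

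For (i), the first step is to show by induction that $s^{i}=s^{i+kd}$ for all $i\geq c$ and $k\geq 0$. Multiplying $s^c=s^{c+d}$ by $s^{i-c}$ gives this for $k=1$, and iterating gives general $k$. This proves the ``if'' direction.

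For the converse, suppose $s^i=s^j$ with $i<j$. Then $s^{i}=s^{i+(j-i)}$. Multiplying by powers of $s$ shows $s^{i'}=s^{i'+(j-i)}$ for all $i'\geq i$, so the minimality of $c$ forces $i\geq c$. Next, minimality of $d$ together with the periodicity just established shows that the set of $e>0$ with $s^c=s^{c+e}$ is closed under differences. It is therefore exactly the set of positive multiples of $d$, which gives $d\mid j-i$. The one point needing care here is the deduction $s^c=s^{c+e}$ from $s^i=s^{i+e}$ with $i\geq c$. I would handle it by multiplying by $s^{m}$ with $m\geq 0$ chosen so that $i+m\equiv c \pmod d$, which brings both sides back to exponent $c$ modulo $d$ via the ``if'' direction.

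For (ii), part (i) tells us that $C=\set{s^c,\dots,s^{c+d-1}}$ has exactly $d$ distinct elements. The map $s^n\mapsto n \bmod d$ is then a well-defined bijective homomorphism $C\to\mathbb{Z}/d\mathbb{Z}$, so $C$ is a cyclic group of order $d$. Its identity is the unique $s^m$ with $m\geq c$ and $m\equiv 0 \pmod d$.

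Uniqueness of the idempotent in $\langle s\rangle$ goes as follows. If $(s^i)^2=s^i$, then $s^{2i}=s^i$, so by (i) we get $i\geq c$ and $d\mid i$. Hence $s^i$ is exactly the identity $s^\omega$ of $C$. Finally, $s^{\omega+1}$ corresponds to $1\in\mathbb{Z}/d\mathbb{Z}$, which generates the group, so $C=\langle s^{\omega+1}\rangle$.

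For (iii), write $c_s,d_s$ for the index and period of each $s\in S$, and set $a=\operatorname{lcm}_s(d_s)\cdot\max_s(c_s)$. This is a finite choice because $S$ is finite. Then $a\geq c_s$ and $d_s\mid a$ for every $s$, so by (ii) we have $s^a=s^\omega$ for all $s\in S$.

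I expect no serious obstacle in any of this. The main thing to get right is the minimality argument in (i).
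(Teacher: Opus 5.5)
Your proof is correct. It differs from the paper's only in that the paper gives no argument for (i) and (ii): it cites Steinberg's Proposition 1.1 and Corollary 1.2.

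Your derivation from the single relation $s^c=s^{c+d}$ is self-contained. You pad the exponent to reduce $s^i=s^{i+e}$ with $i\geq c$ to $s^c=s^{c+e}$. You then use that $\{e>0 \mid s^c=s^{c+e}\}$ is closed under differences and has minimum $d$. This is essentially the standard argument behind the citation, so your proof simply makes the paper's reference explicit. You also correctly read the statement's ``$\langle c\rangle$'' as $\langle s\rangle$.

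The one place your route genuinely differs is (iii).
\begin{itemize}
\item The paper takes $a=|S|!$. This works because by (i) the elements $s,s^2,\dots,s^{c_s+d_s-1}$ are distinct, so $c_s,d_s\leq c_s+d_s-1\leq |S|$. Hence $c_s\leq |S|!$ and $d_s\mid |S|!$.
\item Your choice $a=\mathrm{lcm}_s(d_s)\cdot\max_s(c_s)$ rests on the same criterion from (ii), namely $a\geq c_s$ and $d_s\mid a$. It avoids bounding $c_s,d_s$ by $|S|$ and usually gives a much smaller exponent.
\end{itemize}
The paper's choice has the convenience of depending only on $|S|$, not on the individual indices and periods.
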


\begin{proof}
The proofs for \textit{(i)} and \textit{(ii)} are given by Proposition 1.1 and Corollary 1.2 of \cite{St-rep-monoid} respectively. Finally, for \textit{(iii)}, if $S$ is of order $n$, then $n!$ is such an integer (see \cite[Remark 1.3]{St-rep-monoid}).
\end{proof}

Now define a relation $\sim $ on $M$ by $m \sim n$ if and only if there exist $x, x'$ such that $x x'x=x$, $x'xx'=x'$, $x'x=m^\omega$, $xx' = n^\omega$, and $xm^{\omega +1}x'=n^{\omega +1}$. The relation $\sim$ is an equivalence on $M$ \cite[Proposition 7.2]{St-rep-monoid} and the $\sim$-classes are the generalized conjugacy classes.

\begin{Example}
In case $M$ is a group, then $m^{\omega}$ is the identity for all $m \in M$, and the relation $\sim$ is that of standard conjugacy. It is useful therefore to think of $x'$ as being a generalized inverse of $x$, and of $m^{\omega +1}$ and $n^{\omega +1}$ as being conjugate.
\end{Example}

\begin{Remark}
From now on, for concreteness, we can consider $m^\omega$ as been given by $m^{a}$ for all $m \in M$ using \autoref{semigroupstuff} (iii).
\end{Remark}

The goal is to now determine the number of generalized conjugacy classes of $M \wr \sym[\lambda]$. First suppose that the generalized conjugacy classes of $M$ are given by $C^1, \ldots, C^\mathscr{C}$ and consider an element $(f; \pi) \in M \wr \sym[\lambda]$. We associate with the $\nu th$ cyclic factor $(j_\nu \ldots \pi^{\ell_\nu -1}(j_\nu)) $ of $\pi$ an element 
\begin{align*}
m_\nu (f; \pi) &\coloneqq f(j_\nu)f(\pi^{-1}(j_\nu))\cdots f(\pi^{-\ell_\nu +1}(j_\nu)) \\
& = ff_\pi\cdots f_{\pi^{\ell_\nu-1}}(j_\nu)
\end{align*}
of $M$ called the $\nu th$ \emph{cycle product} of $(f; \pi)$, cf. \cite[4.2.1]{JaKe-reptheory-symmetric-group}; observe that this element is well-defined up to generalized conjugation:

\begin{Lemma}\label{congdef}
For all $\nu, t$, $f \cdots f_{\pi^{\ell_\nu -1}}(j_\nu) \sim f \cdots f_{\pi^{\ell_\nu -1}}(\pi^t(j_\nu))$. 
\end{Lemma}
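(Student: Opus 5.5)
The plan is to show that the two cyclic rotations of the cycle product are generalized conjugate. We do this by writing down explicit elements $x,x'$ that satisfy the defining conditions of $\sim$. Write $g_i=f(\pi^{-i}(j_\nu))$ for $i\in\Z/\ell_\nu\Z$. Then $m=f\cdots f_{\pi^{\ell_\nu-1}}(j_\nu)=g_0g_1\cdots g_{\ell_\nu-1}$. The element attached to $\pi^t(j_\nu)$ is the same word, cyclically rotated. Since $\sim$ is an equivalence relation (\cite[Proposition 7.2]{St-rep-monoid}), an induction on $t$ reduces everything to the case of rotation by one step. In that case $m=uv$ and $n=vu$, with $u=g_0$ (or the corresponding first/last letter, depending on the direction of $\pi$) and $v$ the remaining product.

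The key step is a general fact about finite monoids: $uv\sim vu$ for all $u,v\in M$. Using \autoref{semigroupstuff}(iii), fix $a$ with $s^\omega=s^a$ for every $s$. Take
\begin{gather*}
x=v(uv)^{\omega},\qquad x'=(uv)^{\omega}u\,(vu)^{\omega}.
\end{gather*}
One may symmetrize or adjust this choice as needed; it is the standard choice from the proof that $uv$ and $vu$ are $\mathcal{J}$-related in their group parts. The verifications then rest on the identity $v(uv)^k=(vu)^kv$. From it we get
\begin{gather*}
x'x=(uv)^\omega u(vu)^\omega v(uv)^\omega=(uv)^{\omega}(uv)^{\omega+1}(uv)^\omega.
\end{gather*}
This is a group element of the cyclic group $C$ of \autoref{semigroupstuff}(ii), not the idempotent itself. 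So I would instead choose $x'=(uv)^{\omega-1}u$, interpreted as $u(vu)^{a-1}$ with exponent $a\ge 2$ chosen so that $a-1\geq$ the index. This gives $x'x=(uv)^{a}=(uv)^\omega$ and $xx'=v(uv)^{\omega}(uv)^{a-1}u=(vu)^{2a}=(vu)^\omega$. The conjugation condition then reads
\begin{gather*}
x(uv)^{\omega+1}x'=(vu)^{\omega+1}\cdot(\text{idempotent})=(vu)^{\omega+1}.
\end{gather*}
The remaining equations $xx'x=x$ and $x'xx'=x'$ follow from $x'x$ and $xx'$ being the idempotents acting as the identity on $x$ and $x'$.

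The main obstacle is bookkeeping: choosing the exponents so that every product collapses to $\omega$ or $\omega+1$. This relies on \autoref{semigroupstuff}(i), namely that powers at or beyond the index depend only on residues mod the period, and that $(uv)$ and $(vu)$ have the same period. I would first fix $a$ as a multiple of both periods that exceeds both indices, which is possible by taking $a$ to be a large multiple of $|M|!$. Then I would check each of the five identities by pushing $v$ or $u$ across powers with $v(uv)^k=(vu)^kv$. Finally, I would note that for $\ell_\nu=1$ the statement is trivial. The general $t$ then follows by iterating the one-step rotation and using transitivity.
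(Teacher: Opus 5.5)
Your proposal is correct and takes essentially the paper's route: the paper also reduces (via James--Kerber 4.2.5) to cyclic rotation, hence to $m'm\sim mm'$, and proves this with explicit generalized inverses $x=m'(mm')^{2a-1}$, $x'=(mm')^a m$. Your final choice $x=v(uv)^{a}$, $x'=u(vu)^{a-1}$ satisfies all five conditions, but it needs $a-1$ to be at least the index, which the paper's exponent $2a-1$ guarantees automatically; you should also drop the abandoned first candidate from the write-up.
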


\begin{proof}
The proof is identical to \cite[4.2.5]{JaKe-reptheory-symmetric-group} since we also have that $m'm \sim mm'$ for all $m', m\in M$. The later result can be seen by setting $x=m'(mm')^{2a-1}=(m'm)^{2a-1}m'$ and $x' =(mm')^am=m(m'm)^a$, see \cite[Proposition 7.3]{St-rep-monoid}.
\end{proof}

\autoref{congdef} motivates the following characterization of cycle products:

\begin{Definition}
Form the following $r \times \lambda$ matrix
\begin{align*}
a(f; \pi) \coloneqq (a_{ik}(f; \pi))
\end{align*} 
where $a_{ik}(f; \pi)$ denotes the number of cycle products associated with a $k$-cycle and which belong to the generalized conjugacy class $C^i$ of $M$. We call this matrix the \emph{type} of $(f ; \pi)$.
\end{Definition}

We would now like to show that two elements of $M \wr \sym[\lambda]$ are of the same type if and only if they are conjugate, cf. \cite[Theorem 4.2.8]{JaKe-reptheory-symmetric-group}. We first state some preliminary results:

\begin{Lemma}\label{prelims}
Given an element $(f ; \pi) \in M \wr \sym[\lambda]$, we have the following:
\begin{enumerate}[label=(\roman*)]
\item $a(f; \pi)= a(f_{\pi'};\pi'\pi\pi'^{-1})$;
\item If $a(f;\pi)=a(f';\pi')$, then there is a $\pi'' \in S_\lambda$ satisfying $\pi = \pi''\pi'\pi''^{-1}$ and with the property that for each $\nu$, $m_\nu(f;\pi) \sim m_\nu(f'_{\pi''}; \pi)$.
\end{enumerate}
\end{Lemma}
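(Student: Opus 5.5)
Both parts follow the argument of \cite[4.2.2 and 4.2.6]{JaKe-reptheory-symmetric-group}. The only change is that genuine conjugacy in $M$ is replaced by the generalized conjugacy $\sim$. Throughout we use that $\sim$ is an equivalence relation and that $mm'\sim m'm$, as shown in the proof of \autoref{congdef}.

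For (i), we first compute the cycle decomposition of $\pi'\pi\pi'^{-1}$. If $(j_\nu,\pi(j_\nu),\dots)$ is a cycle of $\pi$ of length $\ell_\nu$, then $(\pi'(j_\nu),\pi'\pi(j_\nu),\dots)$ is a cycle of $\pi'\pi\pi'^{-1}$ of the same length. Since $(f_{\pi'})_{\pi'\pi\pi'^{-1}}=f_{\pi'\pi}$, evaluating at $\pi'(j_\nu)$ gives
\begin{align*}
f_{\pi'}(\pi'\pi^{-t}(j_\nu))=f(\pi^{-t}(j_\nu)).
\end{align*}
Hence the cycle product of $(f_{\pi'};\pi'\pi\pi'^{-1})$ attached to the transported cycle, computed at the starting point $\pi'(j_\nu)$, is literally $m_\nu(f;\pi)$. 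By \autoref{congdef}, a different choice of starting point changes the cycle product only within its $\sim$-class. So the cycles are in length-preserving bijection and the classes $C^i$ of their cycle products agree, and therefore $a(f;\pi)=a(f_{\pi'};\pi'\pi\pi'^{-1})$.

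For (ii), we read off cycle types from the types. Summing the column of $a(f;\pi)$ for $k$-cycles over $i$ gives the number of $k$-cycles of $\pi$, and the same holds for $\pi'$. Equal types therefore give equal cycle types, so $\pi$ and $\pi'$ are conjugate in $\sym[\lambda]$. We want more, namely a conjugator $\pi''$ that matches cycles carrying cycle products in the same class $C^i$. Equal types say that for every $k$ and $i$, $\pi$ and $\pi'$ have the same number of $k$-cycles with cycle product in $C^i$. We fix any bijection between these families of cycles, for each pair $(k,i)$. We then choose $\pi''$ sending each chosen cycle of $\pi'$ onto its partner cycle of $\pi$, with starting point sent to starting point. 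Such a $\pi''$ satisfies $\pi=\pi''\pi'\pi''^{-1}$. By the computation in (i), the cycle products of $(f'_{\pi''};\pi)$ along the cycles of $\pi$ are the cycle products of $(f';\pi')$ along the partner cycles, up to the choice of starting point. This gives $m_\nu(f'_{\pi''};\pi)\sim m_\nu(f;\pi)$ for every $\nu$, where we again use \autoref{congdef} to absorb starting-point shifts and transitivity of $\sim$.

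The main obstacle is bookkeeping rather than substance: we must make the convention $f_\pi=f\circ\pi^{-1}$ consistent with the direction in which cycle products are read. In the identity above we check that the indices $\pi^{-t}(j_\nu)$ are transported correctly, that is, that $\pi'$ maps the sequence $j_\nu,\pi^{-1}(j_\nu),\dots$ onto the corresponding sequence for the conjugated permutation. Once this is done, everything else reduces to \autoref{congdef} and the fact that $\sim$ is an equivalence relation \cite[Proposition 7.2]{St-rep-monoid}.
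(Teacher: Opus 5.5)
Your proposal is correct and takes the same approach as the paper. The paper proves this lemma only by citing \cite[4.2.6 and Lemma 4.2.7]{JaKe-reptheory-symmetric-group}, and you have written out exactly that argument: conjugacy in $M$ is replaced by $\sim$, and \autoref{congdef} absorbs the dependence of cycle products on the choice of starting points.
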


\begin{proof}
The proofs for \textit{(i)} and \textit{(ii)} are identical to \cite[4.2.6]{JaKe-reptheory-symmetric-group} and \cite[Lemma 4.2.7]{JaKe-reptheory-symmetric-group} respectively.
\end{proof}

\begin{Lemma}\label{theoremclassmain}
Two elements $(f;\pi)$ and $(f';\pi')$ of $M \wr \sym[\lambda]$ have the property that $(f;\pi) \sim  (f';\pi')$ if and only if $a(f;\pi)= a(f';\pi')$.
\end{Lemma}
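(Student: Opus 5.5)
We follow the proof of \cite[Theorem 4.2.8]{JaKe-reptheory-symmetric-group}, replacing ordinary conjugacy in $M$ by the generalized conjugacy $\sim$ of \cite{St-rep-monoid}. For $M\wr\sym[\lambda]$ we use the relation $\sim$ defined via $\omega$-powers and generalized inverses, exactly as for $M$.

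\emph{Necessity.} Suppose $(f;\pi)\sim(f';\pi')$, witnessed by $x=(g;\sigma)$ and $x'=(g';\sigma')$. From $x'x=(f;\pi)^\omega$ and $xx'=(f';\pi')^\omega$ we get that $\sigma'\sigma$ and $\sigma\sigma'$ are powers of permutations. Since the $\omega$-power of $\pi$ is the identity permutation when $a$ is divisible by $\lambda!$ (take $a$ as in \autoref{semigroupstuff}(iii)), $\sigma$ is invertible with $\sigma'=\sigma^{-1}$. The permutation part of $x(f;\pi)^{\omega+1}x'=(f';\pi')^{\omega+1}$ then gives $\sigma\pi^{a+1}\sigma^{-1}=\pi'^{a+1}$. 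Choosing $a\equiv 0$ modulo the orders of $\pi$ and $\pi'$, this says $\pi$ and $\pi'$ are conjugate via $\sigma$.

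Next we track cycle products. The $\nu$th cycle product of $(f;\pi)^{\omega+1}$ along a cycle of length $\ell_\nu$ equals $m_\nu(f;\pi)^{a+1}$, up to rotation within the cycle, which is harmless by \autoref{congdef}. Conjugating by $x$ multiplies this on the left by products of entries of $g$ and on the right by products of entries of $g'$. The resulting pair of elements of $M$ again satisfies the defining identities $xx'x=x$, $x'xx'=x'$, and so on, because these identities hold componentwise on cycle products. Hence $m_\nu(f;\pi)^{\omega+1}$ is generalized conjugate to the matching cycle product of $(f';\pi')^{\omega+1}$. By \cite[Proposition 7.2]{St-rep-monoid}, $m\sim m^{\omega+1}$-style invariance holds, i.e.\ $m$ and $m^{a+1}$ lie in the same class. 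It follows that $a(f;\pi)=a(f';\pi')$.

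\emph{Sufficiency.} Suppose $a(f;\pi)=a(f';\pi')$. By \autoref{prelims}(ii) there is $\pi''$ with $\pi=\pi''\pi'\pi''^{-1}$ and $m_\nu(f;\pi)\sim m_\nu(f'_{\pi''};\pi)$ for each $\nu$. Conjugation by the unit $(e;\pi'')$ is an honest conjugation, so it preserves $\sim$; together with \autoref{prelims}(i), this reduces us to the case $\pi=\pi'$ with cycle products pairwise generalized conjugate. Since cycles act on disjoint supports, the wreath product restricted to a cycle's support decomposes as a product of copies of $M\wr\langle\text{$\ell$-cycle}\rangle$. It therefore suffices to treat a single $\ell$-cycle, and then assemble the witnesses $x,x'$ componentwise.

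For a single $\ell$-cycle, we first use conjugation by elements $(h;1)$ with $h$ taking values in units. More flexibly, we use the identity $mm'\sim m'm$ from the proof of \autoref{congdef} to move all of $f$ onto one point $j$ of the cycle: $(f;\pi)\sim(\tilde f;\pi)$, where $\tilde f(j)=m_\nu(f;\pi)$ and $\tilde f$ equals $1$ elsewhere. We then construct explicit witnesses $x=(y\text{ at }j,\,1\text{ elsewhere};1)$ and $x'$ similarly, where $y,y'$ witness $m_\nu(f;\pi)\sim m_\nu(f';\pi)$ in $M$. This requires checking that the $\omega$-powers of $(\tilde f;\pi)$ are concentrated as expected, namely that $(\tilde f;\pi)^\ell$ is the function equal to $m$ at every point of the cycle with identity permutation.

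\emph{Main obstacle.} We expect the hardest step to be this last one, together with the "moving $f$ onto one point" step. Unlike the group case, we cannot conjugate by $(h;1)$ with arbitrary $h$, because non-unit entries are not invertible. We would first try to establish transitivity-compatible elementary moves of the form $(uv;\pi)\sim(vu;\pi)$ in the wreath product, using \cite[Proposition 7.3]{St-rep-monoid} applied inside $M\wr\sym[\lambda]$ itself: write $(f;\pi)=(f_1;1)(f_2;\pi)$ and swap the factors. Iterating this shifts the entries of $f$ around the cycle. Transitivity of $\sim$ \cite[Proposition 7.2]{St-rep-monoid} then completes the argument.
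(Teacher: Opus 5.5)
There is a concrete step in your sufficiency argument that fails as written: the witnesses for the concentrated case. Let $\pi$ be an $\ell$-cycle, with $\tilde f(j)=m$ and $\tilde f=1$ elsewhere on the cycle. In $(\tilde f;\pi)^a$ every point of the cycle collects $a/\ell$ copies of $m$, so $(\tilde f;\pi)^\omega=(E;1)$ with $E\equiv m^\omega$ on the whole cycle. Your own remark that $(\tilde f;\pi)^\ell$ is $m$ at every point already shows this. With $x=(y\text{ at }j,\,1\text{ elsewhere};1)$ and $x'$ likewise, $x'x$ equals $m^\omega$ at $j$ but $1$ at the other points. So $x'x=(\tilde f;\pi)^\omega$ fails as soon as $m^\omega\neq 1$ (e.g.\ $m=b$ in $\mathcal{M}$).

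The repair is to take the witnesses constant on the cycle: $x=(Y;1)$, $x'=(Y';1)$ with $Y\equiv y$ and $Y'\equiv y'$. Then:
\begin{itemize}
\item $x'x$ and $xx'$ are $m^\omega$ and $m'^\omega$ everywhere;
\item $xx'x=x$ and $x'xx'=x'$ hold pointwise;
\item since $(\tilde f;\pi)^{\omega+1}=(F;\pi)$ with $F(j)=m^{\omega+1}$ and $F(p)=m^\omega$ for $p\neq j$, you get $x(F;\pi)x'=(yFy';\pi)$. Its value is $ym^{\omega+1}y'=m'^{\omega+1}$ at $j$ and $ym^\omega y'=(yy')^2=m'^\omega$ elsewhere, which is precisely $(\tilde f';\pi)^{\omega+1}$.
\end{itemize}

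With this repair your sufficiency proof is correct, and it takes a genuinely different route from the paper. The step you flag as the main obstacle does work. We have $(f_2;\pi)(f_1;1)=(f_2\cdot f_1\circ\pi^{-1};\pi)$. If $f_1$ is $f(p)$ at $p$ and $1$ elsewhere, and $f_2$ is $f$ with $f_2(p)=1$, the swap moves $f(p)$ to $\pi p$ and places it to the right of $f(\pi p)$. Starting at $\pi^{-(\ell-1)}j$ and working back to $j$ accumulates exactly $m_\nu(f;\pi)$ at $j$. Then $uv\sim vu$ together with transitivity in the finite monoid $M\wr\sym[\lambda]$ gives $(f;\pi)\sim(\tilde f;\pi)$.

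The paper never passes to a normal form. It writes down, cycle by cycle, a single explicit pair of witnesses assembled from $x_\nu,x'_\nu$ and partial products of $f$ and $f'_{\pi''}$ (in effect Steinberg's $uv\sim vu$ witnesses composed by hand), and verifies the five identities directly. Your version trades that bookkeeping for transitivity of $\sim$, and is shorter and much easier to check.

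Two smaller points:
\begin{itemize}
\item For conjugation by a unit $u$, the witnesses are $u\,m^\omega$ and $m^\omega u^{-1}$, not $u,u^{-1}$.
\item Your necessity argument is the paper's. There, the cycle product of $x(f;\pi)^{\omega+1}x'$ is not the old one flanked by products of entries of $g,g'$. It contains interleaved factors $g'(k)g(\sigma k)$, which are entries of $(f;\pi)^\omega$ by $x'x=(f;\pi)^\omega$ and get absorbed. The witnesses are therefore the single entries $g(\sigma i)$ and $g'(i)$ (the paper's $x(j_\nu)$ and $x'_\sigma(j_\nu)$).
\end{itemize}
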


\begin{proof}
Suppose that $(f;\pi) \sim  (f';\pi')$. Then there exist $(x; \sigma)$, $(x'; \sigma') \in M \wr \sym[\lambda]$ such that
\begin{enumerate}
\item[$(a)$] $(x; \sigma)(x';\sigma')(x; \sigma)=(x;\sigma)$;
\item[$(b)$] $(x'; \sigma')(x; \sigma)(x';\sigma')=(x'; \sigma')$;
\item[$(c)$] $(x';\sigma')(x; \sigma)=(f;\pi)^\omega$;
\item[$(d)$] $(x;\sigma)(x'; \sigma')=(f';\pi')^\omega$;
\item[$(e)$] $(x;\sigma)(f;\pi)^{\omega +1}(x';\sigma')=(f';\pi')^{\omega+1}$.
\end{enumerate}
From $(a)$ it follows immediately that $\sigma'=\sigma^{-1}$, and therefore that $|\pi| \, | \, a$, $|\pi'| \, | \, a$, and $\pi' =\sigma \pi \sigma^{-1}$ from $(c)$, $(d)$, and $(e)$ respectively. 

Now consider only the $M$ component of $(a)$--$(e)$ above:
\begin{enumerate}
\item[$(i)$] $xx'_\sigma x=x$;
\item[$(ii)$] $x' x_{\sigma^{-1}}x'=x'\Rightarrow x'_\sigma x x'_\sigma = x'_\sigma$;
\item[$(iii)$] $x' x_{\sigma^{-1}}=ff_\pi\cdots f_{\pi^{a-1}}\Rightarrow x'_\sigma x=f_\sigma f_{\sigma \pi}\cdots f_{\sigma \pi^{a-1}}=f_\sigma f_{\pi' \sigma}\cdots f_{\pi'^{a-1}\sigma}$;
\item[$(iv)$] $x x'_\sigma= f'f'_{\pi'}\cdots f'_{\pi'^{a-1}}$;
\item[$(v)$] $xf_\sigma f_{\sigma \pi}\cdots f_{\sigma \pi^{a-1}}f_\sigma x'_{\sigma \pi}=f'f'_{\pi'}\cdots f'_{\pi^{a-1}}f' \Rightarrow xf_\sigma f_{\pi' \sigma }\cdots f_{\pi'^{a-1}\sigma}f_\sigma x'_{\pi' \sigma }=f'f'_{\pi'}\cdots f'_{\pi^{a-1}}f'$.
\end{enumerate}
Suppose that the $\nu th$ cyclic factor of $\pi'$ is given by $(j_\nu \ldots \pi'^{\ell_\nu -1}(j_\nu)) $. Taking $(v)$, precomposing each side with increasing powers of $\pi'$ $\ell_\nu-1$ times and multiplying all the results together in order starting with $(v)$ gives 
\begin{align*}
xf_\sigma f_{\pi' \sigma }\cdots f_{\pi'^{a-1}\sigma}f_\sigma x'_{\pi' \sigma } x_{\pi'}f_{\pi' \sigma} f_{\pi'^2 \sigma }\cdots f_{\pi'^{\ell_\nu+a-2}\sigma}f_{\pi'^{\ell_\nu-1}\sigma} x'_{ \pi'^{\ell_\nu }\sigma } \\=f'f'_{\pi'} \cdots f'_{\pi'^{a-1}}f'f'_{\pi'} f'_{\pi'^2}\cdots  f'_{\pi'^{\ell_\nu+a-2}}f'_{\pi'^{\ell_\nu-1}},
\end{align*}
which upon substituting $(iii)$ with the appropriate precomposition with powers of $\pi'$ gives
\begin{align*}
x(f_\sigma f_{\pi' \sigma }\cdots f_{\pi'^{a-1}\sigma})^{\ell_\nu}f_\sigma f_{\pi' \sigma }\cdots f_{\pi'^{\ell_\nu-1}\sigma}x'_{ \pi'^{\ell_\nu }\sigma } =(f'f'_{\pi'} \cdots f'_{\pi'^{a-1}})^{\ell_\nu}f'f'_{\pi'} \cdots f'_{\pi'^{\ell_\nu-1}},
\end{align*}
or in other words,
\begin{align*}
x(j_\nu)(f_\sigma f_{\pi' \sigma }\cdots f_{\pi'^{a-1}\sigma})^{\ell_\nu}(j_\nu)m_\nu(f_\sigma;\pi')x'_{ \sigma }(j_\nu) =(f'f'_{\pi'} \cdots f'_{\pi'^{a-1}})^{\ell_\nu}(j_\nu)m_\nu(f';\pi').
\end{align*}

Furthermore, since $|\pi'| \, | \, a$, the quantities $f_\sigma f_{\pi' \sigma}\cdots f_{\pi'^{a-1}\sigma}(j_\nu)$ and $f'f'_{\pi'}\cdots f'_{\pi'^{a-1}}(j_\nu)$ are powers of $m_\nu(f_\sigma;\pi')$ and $m_\nu(f';\pi')$ respectively satisfying the conditions of \autoref{semigroupstuff} $(ii)$. Hence we have found generalized inverses, namely $x(j_\nu)$ and $x'_\sigma (j_\nu)$, such that $m_\nu(f_\sigma;\pi') \sim m_\nu(f';\pi')$.

Finally, using \autoref{prelims} $(i)$, it follows that $a(f';\pi')=a(f_\sigma; \pi')=a(f;\pi)$.

Conversely, suppose that $a(f; \pi)=a(f';\pi')$. From \autoref{prelims} $(ii)$, we know that there is a $\pi'' \in S_\lambda$ satisfying $\pi = \pi''\pi'\pi''^{-1}$ and with the property that for each $\nu$, $m_\nu(f;\pi) \sim m_\nu(f'_{\pi''}; \pi)$. In particular, $(f'; \pi')\sim (f'_{\pi''};\pi)$, since we can find generalized inverses $x=(e;\pi'')$ and $x'=(e; \pi''^{-1})$. Therefore, it suffices to prove $(f;\pi)\sim (f'_{\pi''};\pi) $.

To begin, since that for each $\nu$, $m_\nu(f;\pi) \sim m_\nu(f'_{\pi''}; \pi)$, we know that there exists $x_\nu, x'_\nu \in M$ such that 
\begin{enumerate}
\item[$(a)$] $x_\nu x'_\nu x_\nu = x_\nu$;
\item[$(b)$] $x'_\nu x_\nu x'_\nu = x'_\nu$;
\item[$(c)$] $x'_\nu x_\nu=(f \cdots f_{\pi^{\ell_\nu-1}}(j_\nu))^\omega$;
\item[$(d)$] $x_\nu x'_\nu = (f'_{\pi''}\cdots f'_{ \pi^{\ell_\nu-1} \pi'' }(j_\nu))^\omega$;
\item[$(e)$] $ x_\nu (f \cdots f_{\pi^{\ell_\nu-1}}(j_\nu))^{\omega+1} x'_\nu=(f'_{\pi''}\cdots f'_{ \pi^{\ell_\nu-1}\pi''}(j_\nu))^{\omega+1}$.
\end{enumerate}

Given a nonnegative integer $m$, $0 \leq m \leq \ell_\nu-1$, define the function $\tilde{f}^\nu:\set{j_\nu, \ldots, \pi^{\ell_\nu -1}(j_\nu)}\rightarrow M$ by $\tilde{f}_{\pi^m}^\nu(j_\nu)= \tilde{f}^\nu_m $ where
\begin{align*}
\tilde{f}_{m}^\nu \coloneqq x'_{\pi^{-m}(j_\nu)}(f'_{\pi^{m}\pi''}f'_{\pi^{m+1}\pi''} \cdots f'_{\pi^{m-1}\pi''})^\omega f'_{\pi^{m}\pi''} x_{\pi^{-m-1}(j_\nu)}
\end{align*}
where
\begin{align*}
x'_{\pi^{-m}(j_\nu)} &= x'_{ff_{\pi} \cdots f_{\pi^{m-1}}} x'_\nu x_{f'_{\pi''} f_{\pi \pi''} \cdots f'_{\pi^{m-1} \pi''}} \\
x_{\pi^{-m}(j_\nu)} &= x'_{f'_{\pi''} f_{\pi \pi''} \cdots f'_{\pi^{m-1} \pi''}} x_\nu x_{ff_{\pi} \cdots f_{\pi^{m-1}}},
\end{align*}
and
\begin{align*}
x_{ff_{\pi} \cdots f_{\pi^{m}}} &= (ff_{\pi} \cdots {f_{\pi^{\ell_\nu-1}})^{\omega} ff_{\pi} \cdots f_{\pi^m}} \\
x'_{ff_{\pi} \cdots f_{\pi^{m}}} &= f_{\pi^{m+1}}  \cdots f_{\pi^{\ell_\nu -1}}(ff_{\pi} \cdots f_{\pi^{\ell_\nu-1}})^{2a-1} \\
x_{f'_{\pi''} f'_{\pi \pi''} \cdots f'_{\pi^{m} \pi''}} &= (f'_{\pi''} f'_{\pi \pi''} \cdots f'_{\pi^{\ell_\nu-1}\pi''})^\omega f'_{\pi''} f'_{\pi \pi''} \cdots f'_{\pi^{m}\pi''} \\
x'_{f'_{\pi''}f'_{\pi \pi''} \cdots f'_{\pi^{m}\pi''}} &= f'_{\pi^{m+1}\pi''}  \cdots f'_{\pi^{\ell_\nu -1} \pi''}(f'_{\pi''}f'_{\pi \pi''} \cdots f'_{\pi^{\ell_\nu-1}\pi''})^{2a-1}.
\end{align*}

\begin{Remark}
All functions appearing in the expression for $\tilde{f}_{m}^\nu$ above are evaluated at $j_\nu$.
\end{Remark}

\begin{Example}
When $m=0$, we have
\begin{align*}
\tilde{f}^\nu(j_\nu)= \tilde{f}^\nu_0 &= x'_\nu (f'_{\pi''}f'_{\pi \pi''}\ldots f'_{\pi^{\ell_\nu-1}\pi''})^\omega f'_{\pi''} x'_{f'_{\pi''}}x_\nu x_f \\ 
&=x'_\nu (f'_{\pi''}f'_{\pi \pi''}\ldots f'_{\pi^{\ell_\nu-1}\pi''})^{\omega+1} x_\nu (ff_{\pi} \cdots f_{\pi^{\ell_\nu-1}})^{2a-1} f \\
&= (ff_{\pi} \cdots f_{\pi^{\ell_\nu-1}})^{\omega+1}(ff_{\pi} \cdots f_{\pi^{\ell_\nu-1}})^{2a-1} f \hspace{0.5cm} \text{by $(e)$}
\\
&=(ff_{\pi} \cdots f_{\pi^{\ell_\nu-1}})^{\omega}f.
\end{align*}
\end{Example}

In fact, it is not too difficult to see that in general
\begin{align*}
\tilde{f}^\nu_m =(f_{\pi^m}f_{\pi^{m+1}} \cdots f_{\pi^{m-1}})^{\omega}f_{\pi^m},
\end{align*}
and hence
\begin{align*}
\tilde{f}_m^\nu\tilde{f}^\nu_{m+1} \cdots \tilde{f}^\nu_m  &=(f_{\pi^m}f_{\pi^{m+1}} \cdots f_{\pi^{m-1}})^{\omega}f_{\pi^m}(f_{\pi^{m+1}}f_{\pi^{m+2}} \cdots f_{\pi^{m}})^{\omega}f_{\pi^{m+1}} \cdots (f_{\pi^m}f_{\pi^{m+1}} \cdots f_{\pi^{m-1}})^{\omega}f_{\pi^m} \\
&=(f_{\pi^m}f_{\pi^{m+1}} \cdots f_{\pi^{m-1}})^{\omega}f_{\pi^m}.
\end{align*}

On the other hand,
\begin{align*}
x_{\pi^{-m}(j_\nu)} x'_{\pi^{-m}(j_\nu)} &= x'_{f'_{\pi''} f_{\pi \pi''} \cdots f'_{\pi^{m-1} \pi''}} x_\nu x_{ff_{\pi} \cdots f_{\pi^{m-1}}}x'_{ff_{\pi} \cdots f_{\pi^{m-1}}} x'_\nu x_{f'_{\pi''} f_{\pi \pi''} \cdots f'_{\pi^{m-1} \pi''}} \\
&= x'_{f'_{\pi''} f_{\pi \pi''} \cdots f'_{\pi^{m-1} \pi''}} x_\nu (ff_{\pi} \cdots {f_{\pi^{\ell_\nu-1}}})^{\omega} x'_\nu x_{f'_{\pi''} f_{\pi \pi''} \cdots f'_{\pi^{m-1} \pi''}} \\
&= x'_{f'_{\pi''} f_{\pi \pi''} \cdots f'_{\pi^{m-1} \pi''}} x_\nu  x'_\nu x_{f'_{\pi''} f_{\pi \pi''} \cdots f'_{\pi^{m-1} \pi''}} \hspace{0.5cm} \text{by $(a)$ and $(c)$} \\
&=x'_{f'_{\pi''} f_{\pi \pi''} \cdots f'_{\pi^{m-1} \pi''}} (f'_{\pi''} f'_{\pi \pi''} \cdots f'_{\pi^{\ell_\nu-1}\pi''})^\omega x_{f'_{\pi''} f_{\pi \pi''} \cdots f'_{\pi^{m-1} \pi''}} \hspace{0.5cm} \text{by $(d)$} \\
&= f'_{\pi^{m}\pi''}  \cdots f'_{\pi^{\ell_\nu -1} \pi''}(f'_{\pi''}f'_{\pi \pi''} \cdots f'_{\pi^{\ell_\nu-1}\pi''})^{\omega} f'_{\pi''} f'_{\pi \pi''} \cdots f'_{\pi^{m-1}\pi''} \\
&= (f'_{\pi^m \pi''}f'_{\pi^{m+1} \pi''} \cdots f'_{\pi^{m-1}\pi''})^{\omega} 
\end{align*}
and similarly, $x'_{\pi^{-m}(j_\nu)} x_{\pi^{-m}(j_\nu)}= (f_{\pi^m}f_{\pi^{m+1} } \cdots f_{\pi^{m-1}})^{\omega} $. Therefore, it also straightforward to see from the definition of $\tilde{f}_m^\nu$ that
\begin{align*}
\tilde{f}_m^\nu\tilde{f}^\nu_{m+1} \cdots \tilde{f}^\nu_m = x'_{\pi^{-m}(j_\nu)}(f'_{\pi^m \pi''}f'_{\pi^{m+1} \pi''} \cdots f'_{\pi^{m-1}\pi''})^{\omega} f'_{\pi^m \pi''} x_{\pi^{-m-1}(j_\nu)}.
\end{align*}

Furthermore, since 
\begin{align*}
(f_{\pi^m}f_{\pi^{m+1} } \cdots f_{\pi^{m-1}\pi''})^{\omega}f_{\pi^{m}}  \cdots f_{\pi^{\ell_\nu -1}}(ff_{\pi} \cdots f_{\pi^{\ell_\nu-1}})^{2a-1} x'_\nu \\
= f_{\pi^{m}}  \cdots f_{\pi^{\ell_\nu -1}}(ff_{\pi} \cdots f_{\pi^{\ell_\nu-1}})^{2a-1} x'_\nu \hspace{0.5cm} \text{by $(b)$ and $(c)$},
\end{align*}
it follows that $x'_{\pi^{-m}(j_\nu)} x_{\pi^{-m}(j_\nu)}x'_{\pi^{-m}(j_\nu)}=x'_{\pi^{-m}(j_\nu)}$. By a similar argument, we also have that $x_{\pi^{-m}(j_\nu)}x'_{\pi^{-m}(j_\nu)}x_{\pi^{-m}(j_\nu)}=x_{\pi^{-m}(j_\nu)}$.

Finally, we define the functions $x(')_{\pi^m}^\nu(j_\nu) \coloneqq  x(')_{\pi^{-m}(j_\nu)}  $ on $\set{j_\nu, \ldots, \pi^{\ell_\nu -1}(j_\nu)}$, and since these functions are defined for all $\nu$, we can patch them together to produce functions $x(')$ defined on all $\set{1, \ldots, \lambda}$. Given the relations proved above, it is clear that the $x(')$ are generalized inverses, and hence $(f;\pi)\sim (f'_{\pi''};\pi) $.
\end{proof}

The following now also counts the number of simple $\mathcal{M} \wr \sym[\lambda]$-modules (for algebraically closed fields of characteristic zero):

\begin{Proposition}\label{typecount}
Suppose that there are $\mathscr{C}$ generalized conjugacy classes of $M$. The number of permissible types of elements of $M \wr \sym[\lambda]$ is given by 
\begin{align*}
\sum_{(\lambda)} p(\lambda_1)\cdots p(\lambda_\mathscr{C})
\end{align*}
where the sum is taken over all $\mathscr{C}$-tuples $(\lambda_1,  \ldots, \lambda_\mathscr{C})$ of nonnegative integers such that $\sum \lambda_i = \lambda$.
\end{Proposition}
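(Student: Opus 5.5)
Since \autoref{theoremclassmain} shows that two elements of $M\wr\sym[\lambda]$ are generalized conjugate if and only if they have the same type, the statement reduces to counting the matrices $a=(a_{ik})$, $1\le i\le\mathscr{C}$, $1\le k\le\lambda$, that arise as types $a(f;\pi)$. The plan is to characterize the permissible types by a simple combinatorial condition and then count them.

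\textbf{Necessity.} For $(f;\pi)$, every cycle of $\pi$ of length $k$ contributes exactly one cycle product, which lies in exactly one class $C^i$. Hence $\sum_{i,k} k\,a_{ik}(f;\pi)=\lambda$, because the cycle lengths of $\pi$ sum to $\lambda$. \textbf{Sufficiency.} Conversely, suppose $a$ has nonnegative integer entries with $\sum_{i,k}k\,a_{ik}=\lambda$. Choose $\pi\in\sym[\lambda]$ with exactly $\sum_i a_{ik}$ cycles of length $k$, and label $a_{ik}$ of these $k$-cycles by $i$. For a $k$-cycle $(j,\pi(j),\dots)$ labelled $i$, pick a representative $m_i\in C^i$. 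Set $f(j)=m_i$ and $f=1_M$ at the other points of that cycle. The cycle product is then $m_i\cdot 1_M\cdots 1_M=m_i\in C^i$, so by \autoref{congdef} this cycle contributes to $a_{ik}$ as intended. Doing this on every cycle produces $(f;\pi)$ with $a(f;\pi)=a$. This is the only step that uses the monoid structure beyond group theory, and it needs only the identity $1_M$, so it goes through exactly as for groups in \cite[4.2]{JaKe-reptheory-symmetric-group}.

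\textbf{Counting.} Group the admissible matrices by rows. Put $\lambda_i=\sum_k k\,a_{ik}$, so that $(\lambda_1,\dots,\lambda_\mathscr{C})$ is a $\mathscr{C}$-tuple of nonnegative integers summing to $\lambda$. For fixed $\lambda_i$, the rows $(a_{i1},\dots,a_{i\lambda})$ with $\sum_k k\,a_{ik}=\lambda_i$ correspond bijectively to partitions of $\lambda_i$: $a_{ik}$ is the multiplicity of the part $k$, and $\lambda_i\le\lambda$ means no part exceeds the available columns. There are therefore $p(\lambda_i)$ choices for row $i$. The rows are independent once the tuple is fixed, so summing the products $p(\lambda_1)\cdots p(\lambda_\mathscr{C})$ over all tuples gives the formula.

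The main point to handle carefully is sufficiency. It requires that each class $C^i$ is nonempty, which holds because the classes are equivalence classes of elements. It also requires that the cycle product is computed up to $\sim$ independently of the starting point of the cycle, which is \autoref{congdef}. The remaining steps are bookkeeping. Finally, combining the count with \autoref{gencontheorem} gives the number of simple $\K(M\wr\sym[\lambda])$-modules.
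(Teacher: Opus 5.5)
Your proposal is correct and takes the same approach as the paper. The paper also characterizes permissible types as the nonnegative integer matrices with $\sum_{i,k}k\,a_{ik}=\lambda$, observes that each such matrix is realized by some element, and counts them row by row via partitions, citing \cite[Lemma 4.2.9]{JaKe-reptheory-symmetric-group}; you make the realization step explicit (one representative $m_i$ per cycle, $1_M$ elsewhere), and your opening appeal to \autoref{theoremclassmain} is not needed, since the statement counts types directly.
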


\begin{proof}
By definition, we know that the entries of the matrix $a(f; \pi)$ satisfy 
\begin{align*}
\lambda_i\coloneqq \sum_k k \cdot a_{ik} \in \mathbb{N}_0 \quad \text{and} \quad \sum_i
\lambda_i = \lambda \end{align*}
with $i \in \set{1, \ldots, \mathscr{C}}$. Conversely, given a matrix $(b_{ik})$ of the appropriate size with entries satisfying  
\begin{align*}
\lambda'_i\coloneqq \sum_k k \cdot b_{ik} \in \mathbb{N}_0 \quad \text{and} \quad \sum_i
\lambda'_i = \lambda, \end{align*}
$i \in \set{1, \ldots, \mathscr{C}}$, it is easy to find an element $(f';\pi')$ whose type is given by $(b_{ik})$. Counting the number of such matrices is a straightforward exercise, cf. \cite[Lemma 4.2.9]{JaKe-reptheory-symmetric-group}.
\end{proof}

\begin{proof}[Proof of \autoref{T:DAlgebrasDiaSimples}]
Firstly, we know that there are $s$ simple $\K \mathcal{M}$-modules, with $s$ given in \autoref{Eq:GenCount}. Moreover, if $\K=\bar{\K}$ of characteristic zero, there are $s=1+3r$ simple $\K \mathcal{M}$-modules and hence generalized conjugacy classes of $\mathcal{M}$ by \autoref{gencontheorem}. 

In the non-planar case, $\sand=\mathcal{M} \wr \sym[\lambda]$ for all $\lambda \in \apex$. We can use \autoref{theoremclassmain}, \autoref{typecount}, together with $H$-reduction to obtain the result for $(i)$.

Finally, given that in the planar case, $\sand=\mathcal{M}^\lambda$ for all $\lambda \in \apex$, the result for $(ii)$ follows immediately by $H$-reduction.
\end{proof}


\section{Gram matrices}\label{G:Gram}


In this final section, we investigate the dimensions of the simple $\mxmon{\paraa, \parab, \parac}$-modules using the concept of a Gram matrix.

\begin{Notation}
In the context of $H$-reduction \autoref{T:SandwichCMP}, we write $L(\lambda, K)$ for the simple $\algebra$-modules associated to $\lambda \in \apex$ and a simple $\sand$-module $K$.
\end{Notation}

\begin{Definition}\label{D:GramMatrix}
For a sandwich pair $(\algebra,\cellbasis)$, the \emph{Gram matrix} $\gmatrix$ 
(associated to $\lambda\in\apex$) is the 
$(\#\rcell_{\lambda})$-$(\#\lcell_{\lambda})$-matrix 
with values in $\K$ defined by
\begin{gather*}
(\gmatrix)_{i,j}=
\begin{cases}
s(e)&\text{if $\hcell_{ij}$ is a pseudo idempotent with eigenvalue $s(e)$ (*)},
\\
0&\text{else}.
\end{cases}
\end{gather*}
(Recall that (*) means that there exists $e\in\hcell_{ij}$ with $e^2=s(e)\cdot e$. Moreover, if such an element exists, it is unique, and so the condition (*) is unambiguous.)
\end{Definition}

\begin{Example}\label{roexp}
Fix rational functions 
\begin{gather}\label{gencondext}
Z_{\paraa}(T) = \frac{\paraa_0}{1-T}, \hspace{1cm} Z_{\parab}(T) = \frac{\parab_0}{1- T}, \hspace{1cm} Z_{\parac}(T) = \frac{\parac_0}{1-T},
\end{gather}
with polynomial degrees satisfying the conditions of \autoref{D:DAlgebrasTheAlgebras}; in particular, $K=1$ so there are no holes. Given $\mromon[1]{\paraa, \parab, \parac}$ with $\lambda =0$ (i.e. the one strand case with no through strands), we can define the following matrix:
\begin{itemize}
\item The rows (columns) index the right (left) cells;
\item The entries $(i,j)$ are given by the ``middle'' part of the diagram $h_{ij}^2$ where $h_{ij} \in \hcell_{ij}$ (there is only one element in this case).
\end{itemize}
We can then read off the Gram matrix immediately:
\begin{gather*}
\xy
(0,0)*{\begin{gathered}
\begin{tabular}{>{\centering\arraybackslash}p{1cm}||>{\centering\arraybackslash}p{1cm}|>{\centering\arraybackslash}p{1cm}|>{\centering\arraybackslash}p{1cm}}
\arrayrulecolor{tomato}
$\rcell/\lcell$ &  \begin{tikzpicture}[anchorbase]
\draw[usual,dot] (0,0) to (0,0.6);
\end{tikzpicture}   &  \begin{tikzpicture}[anchorbase]
\draw[usual,mob=0.5, dot] (0,0) to (0,0.6);
\end{tikzpicture}  & \begin{tikzpicture}[anchorbase]
\draw[usual,mob=0.33, mob=0.66, dot] (0,0) to (0,0.6);
\end{tikzpicture} 
\\
\hline
\hline
\begin{tikzpicture}[anchorbase]
\draw[usual,dot] (0,0) to (0,-0.6);
\end{tikzpicture}	& \begin{tikzpicture}[anchorbase]
\draw[usual,dot] (0,0) to (0,0.3);
\draw[usual,dot] (0,0) to (0,-0.3);
\end{tikzpicture} & \begin{tikzpicture}[anchorbase]
\draw[usual,mob=0, dot] (0,0) to (0,0.3);
\begin{scope}[on background layer]
\draw[usual,dot] (0,0) to (0,-0.3);
\end{scope}
\end{tikzpicture} & \begin{tikzpicture}[anchorbase]
\draw[usual,mob=0.25, dot] (0,0) to (0,0.3);
\draw[usual,mob=0.25, dot] (0,0) to (0,-0.3);
\end{tikzpicture}
\\
\hline
\begin{tikzpicture}[anchorbase]
\draw[usual,mob=0.5, dot] (0,0) to (0,-0.6);
\end{tikzpicture}	& \begin{tikzpicture}[anchorbase]
\draw[usual,mob=0, dot] (0,0) to (0,0.3);
\begin{scope}[on background layer]
\draw[usual,dot] (0,0) to (0,-0.3);
\end{scope}
\end{tikzpicture} & \begin{tikzpicture}[anchorbase]
\draw[usual,mob=0.25, dot] (0,0) to (0,0.3);
\draw[usual,mob=0.25, dot] (0,0) to (0,-0.3);
\end{tikzpicture} & \begin{tikzpicture}[anchorbase]
\draw[usual,mob=0, dot] (0,0) to (0,0.3);
\begin{scope}[on background layer]
\draw[usual,dot] (0,0) to (0,-0.3);
\end{scope}
\end{tikzpicture}
\\
\hline
\begin{tikzpicture}[anchorbase]
\draw[usual, mob=0.33, mob=0.66, dot] (0,0) to (0,-0.6);
\end{tikzpicture}	& \begin{tikzpicture}[anchorbase]
\draw[usual,mob=0.25, dot] (0,0) to (0,0.3);
\draw[usual,mob=0.25, dot] (0,0) to (0,-0.3);
\end{tikzpicture} & \begin{tikzpicture}[anchorbase]
\draw[usual,mob=0, dot] (0,0) to (0,0.3);
\begin{scope}[on background layer]
\draw[usual,dot] (0,0) to (0,-0.3);
\end{scope}
\end{tikzpicture} & \begin{tikzpicture}[anchorbase]
\draw[usual,mob=0.25, dot] (0,0) to (0,0.3);
\draw[usual,mob=0.25, dot] (0,0) to (0,-0.3);
\end{tikzpicture}
\end{tabular}
\end{gathered}};
\endxy
\quad
\raisebox{-0.3cm}{$\leftrightsquigarrow
\gmatrix[0]=
\begin{pmatrix}
\paraa_0 & \parab_0 & \parac_0
\\
\parab_0 & \parac_0 & \parab_0
\\
\parac_0 & \parab_0 & \parac_0
\end{pmatrix}.$}
\end{gather*}
Performing some row reductions on $\gmatrix[0]$ gives
\begin{align*}
\begin{pmatrix}
\paraa_0 & \parab_0 & \parac_0
\\
\parab_0 & \parac_0 & \parab_0
\\
\parac_0 & \parab_0 & \parac_0
\end{pmatrix} \quad \stackrel{\text{row} \,1-3}{\sim} \quad \begin{pmatrix}
\paraa_0-\parac_0  & 0 & 0
\\
\parab_0 & \parac_0 & \parab_0
\\
\parac_0 & \parab_0 & \parac_0
\end{pmatrix}.
\end{align*}
Hence $\det(\gmatrix[0])=(\paraa_0-\parac_0)(\parac_0^2-\parab_0^2)$ can be easily read off, and so $\gmatrix[0]$ is full rank when $\paraa_0 \neq \parac_0$, $\parac_0 \neq \pm\parab_0$. Of course, the determinant of $\gmatrix[0]$ can be directly computed, however performing row reductions in this manner is foundational for the proof of a more general statement in \autoref{P:DAlgebrasGram} below. 
\end{Example}

In the case of $\mromon{\paraa, \parab, \parac}$, we have the following theorem relating Gram matrices to the dimensions of simple representations:

\begin{Theorem}\label{T:DAlgebrasDSimples}
Consider the involutive sandwich pair $\big(\mromon{\paraa, \parab, \parac},\mromon{1}\big)$, where we assume that $\K=\overline{\K}$ with $\chark\nmid\{2,r\}$ (e.g. $\chark=0$). The dimension of the simple $\mromon{\paraa, \parab, \parac}$-modules for $\lambda\in\apex$ given a simple $\sand[\lambda]$-module $K$ is 
\begin{gather*}
\dim\big(\lmod[{\lambda,K}]\big)
=\mrk(\gmatrix)\cdot\dim(K)=\mrk(\gmatrix).
\end{gather*}
\end{Theorem}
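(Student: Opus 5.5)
Following the standard template for sandwich cellular algebras, I would realize $L(\lambda,K)$ as the head of the cell module $\Delta(\lambda,K)=\Delta(\lambda)\otimes_{\sand[\lambda]}K$ and compute its dimension through a bilinear form. \autoref{mobcelldatum} and \autoref{P:SandwichStuff} make $\big(\mromon{\paraa,\parab,\parac},\mromon{1}\big)$ an involutive sandwich pair. The left cell module $\Delta(\lambda)$ is then free over $\sand[\lambda]$ with basis indexed by the tops $\Tcal(\lambda)$, that is, by the left cells in $\jcell_\lambda$. Hence $\Delta(\lambda,K)$ has $\K$-basis $\{t\otimes k\}$, with $t$ running over tops and $k$ over a basis of $K$. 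Pairing a top $t$ with a flipped top $u^\star$ and multiplying (the cell condition (AC$_2$)) gives an element of $\sand[\lambda]$ modulo higher terms. This defines a $\sand[\lambda]$-valued form $\phi(t,u)\in\sand[\lambda]\cup\{0\}$. The radical of the induced $\K$-form on $\Delta(\lambda,K)$ is its unique maximal submodule, since the module is generated by any vector pairing nontrivially, exactly as in \cite{Tu-sandwich-cellular}. So $\dim L(\lambda,K)$ equals the rank of the $\K$-matrix $\big(\phi(t,u)\text{ acting on }K\big)$.

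The rook-monoid specifics enter next. In $\mromon{1}$ a top is a subset of $\lambda$ top points (the through strands) together with the $n-\lambda$ remaining single points, each carrying a decorated cap from $\{\text{plain}, m, m^2\}$ (with no holes, $K=1$, under the parameters of \autoref{roexp}; in general also $h$-powers). Composing a bottom half with a top half does one of two things. If the through-strand positions disagree, the number of through strands drops and the product vanishes modulo $\algebra^{\rsandorder[\Pcal]\lambda}$. If they agree, the through strands join identically, so the middle is the identity of $\sand[\lambda]$. Each non-through position contributes a closed component, evaluated to a scalar from $\paraa,\parab,\parac$, and the scalars multiply over positions. Thus $\phi(t,u)=g(t,u)\cdot 1_{\sand[\lambda]}$ with $g(t,u)\in\K$, and this scalar matrix is exactly $\gmatrix$ from \autoref{D:GramMatrix}. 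The form on $\Delta(\lambda,K)$ is therefore $\gmatrix\otimes\mathrm{id}_K$, giving
\[
\dim L(\lambda,K)=\mrk(\gmatrix)\dim K .
\]

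It remains to see $\dim K=1$. Since $\sym[\lambda]$ is absent on the rook side relevant here, $\sand[\lambda]=\mathcal{M}^\lambda$ is a commutative monoid algebra (or we use that all simples of $\K\mathcal{M}$ are one-dimensional). By \autoref{P:CellStructure}, its maximal subgroups are the cyclic groups of orders $1,r,2r$. Over $\K=\overline{\K}$ with $\chark\nmid 2r$, the algebras $\K[\Z/r\Z]$ and $\K[\Z/2r\Z]$ are split semisimple with one-dimensional simples. By the Clifford--Munn--Ponizovski\u{\i} correspondence (\cite[Theorem 5.5]{St-rep-monoid}), every simple $\K\mathcal{M}$-module is one-dimensional. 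Tensor products then give one-dimensional simples of $\K\mathcal{M}^\lambda$. This is where the hypotheses on $\K$ are used.

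I expect the main obstacle to be the factorization claim $\phi(t,u)\in\K\cdot 1_{\sand[\lambda]}$. One must check that decorations on through strands never migrate into the pairing, since the normal form of \autoref{L:DAlgebrasDMonFactor} places them in the middle. One must also check that decorations on the single points of $t$ and $u$ only affect closed components. Closed components are evaluated rather than reduced by \autoref{mobrel} in the middle, which needs care when a component carries three or four M\"obius dots. Such a component must first be rewritten via $m^3=hm$ into the evaluation series, as in the $\parab_0,\parac_0$ entries of \autoref{roexp}. I would settle this by a direct case check on a single point with cap decorations in $\{1,m,m^2\}\times\{1,m,m^2\}$.
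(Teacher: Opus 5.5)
Your derivation of the first equality is sound. The step $\dim K=1$, however, rests on a false claim: $\sym[\lambda]$ is not absent. The monoid $\mromon{1}$ is the non-planar rook monoid, so its through strands may cross. By \autoref{L:DAlgebrasDMonFactor} and the table \autoref{Eq:DAlgebrasTable}, its sandwiched algebras are $\sand=\K[\mathcal{M}\wr\sym[\lambda]]$; only the planar $\mpromon{1}$ has $\sand=\K[\mathcal{M}^{\lambda}]$. For $\lambda\geq 2$ this algebra is noncommutative and has simple modules of dimension larger than one, and the second equality then genuinely fails.

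The cleanest instance is the top cell. Take $n\geq 3$ and $\chark=0$.
\begin{itemize}
\item The quotient of $\mromon{\paraa,\parab,\parac}$ by the span of diagrams with fewer than $n$ through strands is $\K[\mathcal{M}\wr\sym[n]]$.
\item The map $(f;\pi)\mapsto\pi$ is a surjective monoid map onto $\sym[n]$.
\item Inflating a non-linear simple $\K\sym[n]$-module $K$ along it gives a simple module of apex $n$ with $\dim\lmod[{n,K}]=\dim K\geq 2$, whereas $\mrk(\gmatrix[n])=1$ by \autoref{P:DAlgebrasGram}.
\end{itemize}
For $\lambda=2$, use instead the nonabelian maximal subgroup $(\Z/2r\Z)\wr\sym[2]$ at the idempotent of $\jcell_{2r}$ placed on both strands, together with Clifford--Munn--Ponizovski\u{\i}.

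The paper's own proof makes the same move: it derives $\dim K=1$ from commutativity of the sandwiched algebra. So this is a defect of the statement for $\mromon{1}$, not something your argument could repair. What you have actually proved is the full statement for $\mpromon{1}$ (or for one-dimensional $K$), and the first equality for $\mromon{1}$.

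For that first equality your route differs from the paper's and is more robust. The paper imports the formula from \cite[Theorem 4B.25]{Tu-sandwich-cellular} and \cite[Theorem 3.5.3]{Li-cyclic-monoid}. Those results assume the sandwiched algebra is commutative and that its contributing representations come from semisimple cyclic groups, which the paper checks via \autoref{P:CellStructure}. You instead check directly that the pairing $\phi(u,t)$ is either $0$ or $g(u,t)\cdot 1_{\sand}$ with $g(u,t)\in\K$. This holds because tops and bottoms have minimal crossings and all decorations of through strands sit in the middle.

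This gives $\dim\lmod=\mrk(\gmatrix)\cdot\dim K$ for every simple $\sand$-module $K$, including those of the noncommutative wreath product. Your radical argument uses neither semisimplicity of $\sand$ nor $\chark\nmid\{2,r\}$. Even in the planar case that hypothesis is not what forces $\dim K=1$: a commutative algebra over $\K=\overline{\K}$ has only one-dimensional simples.

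Two small slips. Tops index the right cells and bottoms the left cells (\autoref{T:SandwichIsSandwich}). The non-through points carry decorated endpoints, not caps.
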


\begin{proof}
The second equality follows from the fact that the simple representations of commutative algebras are of dimension one. By (copying) the proof of \cite[Theorem 4B.25]{Tu-sandwich-cellular} or \cite[Theorem 3.5.3]{Li-cyclic-monoid}, the first equation holds whenever the sandwiched algebra is commutative and all contributing representations come from semisimple cyclic groups. Then we are done by \autoref{P:CellStructure}.
\end{proof}

For the remainder of this section, we compute Gram matrix ranks for a select number of cases, including $\mromon{\paraa, \parab, \parac}$. (All of this, in contrast to \autoref{T:DAlgebrasDSimples}, works over an arbitrary field.) To summarize, we have the following:

\begin{Proposition}\label{P:DAlgebrasGram}
Given the 
involutive sandwich pair $\big(\mxmon{\paraa, \parab, \parac} ,\mxmon{1} \big)$, the ranks of (some of the) Gram matrices are as follows:
\begin{enumerate}[label=(\roman*)]

\item We have $\mrk(\gmatrix[n])=1$ whenever $n\in\apex$;
\end{enumerate}
Furthermore, assuming the conditions \autoref{gencondext} set out in \autoref{roexp}:

\begin{enumerate}[label=(\roman*), resume]
\item For $ \mxmon{1} \in\{\mromon{1},\mpromon{1}\}$, we have $\mrk(\gmatrix)=\binom{n}{\lambda} \cdot 3^{n-\lambda}$ 
for $\lambda\in\apex$ in case that
\begin{align}\label{gramcond}
\Bigg[\prod_{i=1}^{\bar{\lambda}}\Big((\paraa_0^i - \parac_0^i)-\sum_{k=1}^{i-1}\binom{i}{k}(\paraa_0^{i-k}\parac_0^k - \parac_0^i)  \Big)^{\binom{\bar{\lambda}}{i}\cdot 2^{\bar{\lambda}-i}}  \Bigg](\parac_0^2-\parab_0^2)^{\bar{\lambda} \cdot 3^{\bar{\lambda}-1}} \neq 0
\end{align}
where $\bar{\lambda}=n -\lambda$.

\item For $\mxmon{1} \in \{\mrobrmon{1},\mmomon{1}\}$ and $n-1\in\apex$, which also satisfies \autoref{gramcond},
we have $\mrk(\gmatrix[{n-1}])=3n$.

\end{enumerate}
\end{Proposition}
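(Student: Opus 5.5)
For \emph{(i)}, the $J$-cell $\jcell_{n}$ consists of diagrams with $n$ through strands. For every $\mxmon{1}$ in the list, the only top and bottom with $n$ through strands is the trivial one, so $\#\rcell_{n}=\#\lcell_{n}=1$. The unique $H$-cell is then $\hcell_{n}=\sand[n]$, which contains the identity diagram. The identity is an idempotent with $s(e)=1$, so $\gmatrix[n]=(1)$ and the rank is $1$. Since $n\in\apex$ by \autoref{L:Apex}, nothing further is needed.

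For \emph{(ii)} I specialise to \autoref{gencondext}. Then $K=1$, so $h=1_1$ and the Möbius relation becomes $m^3=m$. Every closed component is a singleton loop carrying $j$ Möbius dots, and it evaluates to $\paraa_0$, $\parab_0$ or $\parac_0$ according as $j=0$, $j$ odd, or $j\ge 2$ even.

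I would first describe the left cells of $\mromon{1}$ with $\lambda$ through strands. A half diagram is a choice of a $\lambda$-subset $S\subset\{1,\dots,n\}$ of through positions, together with a label in $\{0,1,2\}$ (the number of Möbius dots) on each of the $\bar\lambda=n-\lambda$ remaining singletons. Möbius dots on through strands belong to the middle $\sand$ and play no role here. This gives $\binom{n}{\lambda}3^{\bar\lambda}$ half diagrams, matching \autoref{Eq:DAlgebrasTable}.

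Next I compute the product of a bottom $(S,\mathbf{u})$ with a top $(S',\mathbf{v})$. If $S\neq S'$, some through strand meets a singleton, so the number of through strands drops below $\lambda$ and the entry is $0$. If $S=S'$, the $H$-cell is pseudo-idempotent. Its scalar is $\prod_{i\notin S}g(u_i+v_i)$, where $g$ is the evaluation function above. Hence $\gmatrix$ is block diagonal with $\binom{n}{\lambda}$ blocks, each equal to the Kronecker power $(\gmatrix[0])^{\otimes\bar\lambda}$, where $\gmatrix[0]$ is the $3\times 3$ matrix of \autoref{roexp}. The planar rook case is identical, since order-preserving matchings do not change this block structure.

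It follows that
\[
\det\gmatrix=\det(\gmatrix[0])^{\binom{n}{\lambda}\bar\lambda 3^{\bar\lambda-1}},\qquad \det(\gmatrix[0])=(\paraa_0-\parac_0)(\parac_0^2-\parab_0^2),
\]
by \autoref{roexp}. The expression in \autoref{gramcond} contains, for $i=1$, the factor $(\paraa_0-\parac_0)$ to a positive power, and also the factor $(\parac_0^2-\parab_0^2)$. So its nonvanishing forces $\det(\gmatrix[0])\neq 0$, and therefore $\gmatrix$ has full rank $\binom{n}{\lambda}3^{\bar\lambda}$. (If one wants to match the author's row-reduction form exactly, one can row-reduce the Kronecker power iteratively, as in \autoref{roexp}. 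For the rank statement, however, the Kronecker determinant suffices.)

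For \emph{(iii)}, in $\mrobrmon{1}$ or $\mmomon{1}$, a half diagram with $n-1$ through strands cannot contain a cup or cap, because that would remove two strands. It therefore consists of exactly one singleton, at one of $n$ positions, carrying $0$, $1$ or $2$ Möbius dots; for the Motzkin monoid planarity is automatic. This gives $3n$ half diagrams. The same argument as in \emph{(ii)} applies: mismatched singleton positions drop the number of through strands, while matched positions give the scalar $g(u+v)$. Hence $\gmatrix[n-1]$ is block diagonal with $n$ copies of $\gmatrix[0]$, and it has rank $3n$ under \autoref{gramcond} with $\bar\lambda=1$.

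The main obstacle I expect is the careful verification that mismatched tops and bottoms genuinely fall into higher-order terms, rather than producing a diagram with $\lambda$ through strands. In the Motzkin/rook-Brauer case this requires checking that a singleton can never be ``repaired'' by a strand passing through. The secondary point is the bookkeeping of $m^3=m$ when $K=1$, which is what justifies the evaluation function $g$.
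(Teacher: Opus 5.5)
Your proof is correct. Part (i) follows the paper. Part (iii) also matches: the paper gets it from the $\bar\lambda=1$ case of (ii), since there are no cups or caps when $\lambda=n-1$. For (ii), however, you take a genuinely different and sharper route.

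Both you and the paper reduce $G^{\lambda}$ to $\binom{n}{\lambda}$ diagonal copies of the $\lambda=0$ Gram matrix on $\bar\lambda$ strands. The paper then row-reduces that $3^{\bar\lambda}\times 3^{\bar\lambda}$ block by hand. It orders half diagrams by which components carry M\"obius dots, and subtracts rows in which undotted components are replaced by doubly dotted ones. This yields a block lower triangular form with diagonal blocks $P_i\,B_{\bar\lambda-i}$, where $B_k=\left(\begin{smallmatrix}\gamma_0&\beta_0\\ \beta_0&\gamma_0\end{smallmatrix}\right)^{\otimes k}$ and $P_i=(\alpha_0^i-\gamma_0^i)-\sum_{k=1}^{i-1}\binom{i}{k}(\alpha_0^{i-k}\gamma_0^k-\gamma_0^i)$. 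The paper reads off the left-hand side of \autoref{gramcond} as the determinant.

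You instead note that the entries factorise over positions. So the block is exactly $G_1^{\otimes\bar\lambda}$, with $G_1$ the $3\times 3$ matrix of \autoref{roexp}. This gives the determinant in one line, and more:
\begin{itemize}
\item $\mathrm{rank}(G^{\lambda})=\binom{n}{\lambda}\,(\mathrm{rank}\,G_1)^{\bar\lambda}$ for all parameter values;
\item full rank holds exactly when $(\alpha_0-\gamma_0)(\gamma_0^2-\beta_0^2)\neq 0$.
\end{itemize}

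Your route also shows that the paper's closed-form determinant is only right for $\bar\lambda\le 2$. Matching with $\det(G_1)^{\bar\lambda 3^{\bar\lambda-1}}$, the diagonal factor should be $(\alpha_0-\gamma_0)^i$. This agrees with $P_i$ for $i\le 2$, but $P_3=(\alpha_0-\gamma_0)(\alpha_0^2-2\alpha_0\gamma_0-5\gamma_0^2)$. For instance, at $(\alpha_0,\beta_0,\gamma_0)=(2,0,1)$ the $27\times 27$ block has determinant $1$, whereas the paper's formula gives $-5$.

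The proposition itself survives, for exactly the reason you give. When $\bar\lambda\ge 1$, \autoref{gramcond} contains the factors $(\alpha_0-\gamma_0)$ and $(\gamma_0^2-\beta_0^2)$ to positive powers. So it is a sufficient condition for full rank, though for $\bar\lambda\ge 3$ it is no longer necessary.
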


\begin{proof}
\textit{(i).} The condition $\lambda = n$ implies that every strand is a through strand, and hence there is only one right cell that is also a left cell and the identity diagram is clearly the idempotent. 

\textit{(ii).} We first note that for $n=0$, $0\in \apex$, $\mxmon{\paraa, \parab, \parac} \cong \K$, and the statement is clearly true. We therefore concentrate on the case $n >0$.

The strategy is to prove that for every $n >0$, $\gmatrix[0]$ can be row reduced to block lower triangular form 
\begin{align*}
\begin{pmatrix}
A_n  &   0
\\
 *  & B_n
\end{pmatrix}
\end{align*}
where the matrix $B_n$ with $\det(B_n)=(\parac_0^2-\parab_0^2)^{n\cdot 2^{n-1}}$ does not containing $\paraa_0$ and the matrix $A_n$ is block lower triangular with diagonal blocks of the form 
\begin{align*}
\Big((\paraa_0^i - \parac_0^i)-\sum_{k=1}^{i-1}\binom{i}{k}(\paraa_0^{i-k}\parac_0^k - \parac_0^i)  \Big) B_{n-i},
\end{align*}
for $i=1, \ldots, n$ and $B_0 \coloneqq (1)$; the number of such blocks is given by the binomial coefficient $\binom{n}{i}$. 

We then have
\begin{align*}
\det(A_n) = \prod_{i=1}^{n}\Big((\paraa_0^i - \parac_0^i)-\sum_{k=1}^{i-1}\binom{i}{k}(\paraa_0^{i-k}\parac_0^k - \parac_0^i)  \Big)^{\binom{n}{i}\cdot 2^{n-i}} (\parac_0^2-\parab_0^2)^{\binom{n}{i}(n-i) \cdot 2^{n-i-1}},
\end{align*}
and so 
\begin{align*}
\det(\gmatrix[0])=\Bigg[\prod_{i=1}^{n}\Big((\paraa_0^i - \parac_0^i)-\sum_{k=1}^{i-1}\binom{i}{k}(\paraa_0^{i-k}\parac_0^k - \parac_0^i)  \Big)^{\binom{n}{i}\cdot 2^{n-i}}  \Bigg](\parac_0^2-\parab_0^2)^{n \cdot 3^{n-1}},
\end{align*}
since 
\begin{align*}
\sum_{i=1}^n \binom{n}{i} (n-i) \cdot 2^{n-i-1} + n \cdot 2^{n-1}&=\sum_{i=0}^n \binom{n}{i} (n-i) \cdot 2^{n-i-1} \\
& = \sum_{i=0}^n \binom{n}{i} i \cdot 2^{i-1} = n \cdot 3^{n-1},
\end{align*}
where the last equality can be seen by differentiating both sides of $(1+x)^n = \sum_{r=0}^n \binom{n}{r}x^r$ with respect to $x$ before setting $x=2$. 

Once we have the form of $\gmatrix[0]$ for fixed $n>0$, we can then determine $\gmatrix[\lambda]$ for $\lambda \in \apex$: the nonzero entries of $\gmatrix$ occur when through strands align, for example in the case $n=3$, $\lambda=1$ (ignoring Möbius dots) we have
\begin{gather*}
\xy
(0,0)*{\begin{gathered}
\begin{tabular}{C||C|C|C}
\arrayrulecolor{tomato}
\rcell/\lcell & \begin{tikzpicture}[anchorbase]
\draw[usual,dot] (0,0) to (0,0.15);
\draw[usual,dot] (0.5,0) to (0.5,0.15);
\draw[usual] (1,0) to (1,0.25);
\end{tikzpicture} & \begin{tikzpicture}[anchorbase]
\draw[usual,dot] (0,0) to (0,0.15);
\draw[usual,dot] (1,0) to (1,0.15);
\draw[usual] (0.5,0) to (0.5,0.25);
\end{tikzpicture} & \begin{tikzpicture}[anchorbase]
\draw[usual,dot] (1,0) to (1,0.15);
\draw[usual,dot] (0.5,0) to (0.5,0.15);
\draw[usual] (0,0) to (0,0.25);
\end{tikzpicture}
\\
\hline
\hline
\begin{tikzpicture}[anchorbase]
\draw[usual,dot] (0,0) to (0,-0.15);
\draw[usual,dot] (0.5,0) to (0.5,-0.15);
\draw[usual] (1,0) to (1,-0.25);
\end{tikzpicture}	& \begin{tikzpicture}[anchorbase]
\draw[usual,dot] (0,0) to (0,0.15);
\draw[usual,dot] (0.5,0) to (0.5,0.15);
\draw[usual] (1,0) to (1,0.25);
\draw[usual,dot] (0,0) to (0,-0.15);
\draw[usual,dot] (0.5,0) to (0.5,-0.15);
\draw[usual] (1,0) to (1,-0.25);
\end{tikzpicture} & \begin{tikzpicture}[anchorbase]
\draw[usual,dot] (0,0) to (0,0.15);
\draw[usual,dot] (1,0) to (1,0.15);
\draw[usual] (0.5,0) to (0.5,0.25);
\draw[usual,dot] (0,0) to (0,-0.15);
\draw[usual,dot] (0.5,0) to (0.5,-0.15);
\draw[usual] (1,0) to (1,-0.25);
\end{tikzpicture} & \begin{tikzpicture}[anchorbase]
\draw[usual,dot] (1,0) to (1,0.15);
\draw[usual,dot] (0.5,0) to (0.5,0.15);
\draw[usual] (0,0) to (0,0.25);
\draw[usual,dot] (0,0) to (0,-0.15);
\draw[usual,dot] (0.5,0) to (0.5,-0.15);
\draw[usual] (1,0) to (1,-0.25);
\end{tikzpicture}
\\
\hline
\begin{tikzpicture}[anchorbase]
\draw[usual,dot] (0,0) to (0,-0.15);
\draw[usual,dot] (1,0) to (1,-0.15);
\draw[usual] (0.5,0) to (0.5,-0.25);
\end{tikzpicture}	& \begin{tikzpicture}[anchorbase]
\draw[usual,dot] (0,0) to (0,0.15);
\draw[usual,dot] (0.5,0) to (0.5,0.15);
\draw[usual] (1,0) to (1,0.25);
\draw[usual,dot] (0,0) to (0,-0.15);
\draw[usual,dot] (1,0) to (1,-0.15);
\draw[usual] (0.5,0) to (0.5,-0.25);
\end{tikzpicture} & \begin{tikzpicture}[anchorbase]
\draw[usual,dot] (0,0) to (0,0.15);
\draw[usual,dot] (1,0) to (1,0.15);
\draw[usual] (0.5,0) to (0.5,0.25);
\draw[usual,dot] (0,0) to (0,-0.15);
\draw[usual,dot] (1,0) to (1,-0.15);
\draw[usual] (0.5,0) to (0.5,-0.25);
\end{tikzpicture} & \begin{tikzpicture}[anchorbase]
\draw[usual,dot] (1,0) to (1,0.15);
\draw[usual,dot] (0.5,0) to (0.5,0.15);
\draw[usual] (0,0) to (0,0.25);
\draw[usual,dot] (0,0) to (0,-0.15);
\draw[usual,dot] (1,0) to (1,-0.15);
\draw[usual] (0.5,0) to (0.5,-0.25);
\end{tikzpicture}
\\
\hline
\begin{tikzpicture}[anchorbase]
\draw[usual,dot] (1,0) to (1,-0.15);
\draw[usual,dot] (0.5,0) to (0.5,-0.15);
\draw[usual] (0,0) to (0,-0.25);
\end{tikzpicture}	& \begin{tikzpicture}[anchorbase]
\draw[usual,dot] (0,0) to (0,0.15);
\draw[usual,dot] (0.5,0) to (0.5,0.15);
\draw[usual] (1,0) to (1,0.25);
\draw[usual,dot] (1,0) to (1,-0.15);
\draw[usual,dot] (0.5,0) to (0.5,-0.15);
\draw[usual] (0,0) to (0,-0.25);
\end{tikzpicture} & \begin{tikzpicture}[anchorbase]
\draw[usual,dot] (0,0) to (0,0.15);
\draw[usual,dot] (1,0) to (1,0.15);
\draw[usual] (0.5,0) to (0.5,0.25);
\draw[usual,dot] (1,0) to (1,-0.15);
\draw[usual,dot] (0.5,0) to (0.5,-0.15);
\draw[usual] (0,0) to (0,-0.25);
\end{tikzpicture} & \begin{tikzpicture}[anchorbase]
\draw[usual,dot] (1,0) to (1,0.15);
\draw[usual,dot] (0.5,0) to (0.5,0.15);
\draw[usual] (0,0) to (0,0.25);
\draw[usual,dot] (1,0) to (1,-0.15);
\draw[usual,dot] (0.5,0) to (0.5,-0.15);
\draw[usual] (0,0) to (0,-0.25);
\end{tikzpicture}
\end{tabular}
\end{gathered}};
\endxy
\quad
\raisebox{-0.3cm}{$\leftrightsquigarrow
\gmatrix[1]=
\begin{pmatrix}
\paraa_0^{2} & 0 & 0
\\
0 & \paraa_0^{2} & 0
\\
0 & 0 & \paraa_0^{2}
\end{pmatrix}$.}
\end{gather*}
Since a diagram that is not idempotent cannot be made so by adding Möbius dots (see \autoref{L:Idempotents}), it is relatively straightforward to see that $\gmatrix$ in general can be put into block diagonal form with each block given by $\gmatrix[0]$ evaluated for the case $n-\lambda$. The condition that $\gmatrix$ be full rank, \ie \, $\det(\gmatrix) \neq 0$, is therefore given by \autoref{gramcond}, and the size of $\gmatrix$ is determined by $\#\lcell_{\lambda}$ in \autoref{Eq:DAlgebrasTable} with $K=1$, \ie \, $\binom{n}{\lambda} \cdot 3^{n-\lambda}$.

We now turn our attention to proving that matrices $A_n$ and $B_n$ exist with the required properties. We already saw in \autoref{roexp} that there are such matrices $A_1$, $B_1$, and it is the procedure shown there to find them that we would like to generalise.

To begin, consider the following ordering of left cells: first define the sequence 
\begin{align*}
s_1:\quad 
\begin{tikzpicture}[anchorbase]
\draw[usual,mob=0.5, dot] (0,0) to (0,0.6);
\end{tikzpicture}, \quad \begin{tikzpicture}[anchorbase]
\draw[usual,mob=0.33, mob=0.66, dot] (0,0) to (0,0.6);
\end{tikzpicture},
\end{align*}
and then inductively for $k>1$ the sequence
\begin{align*}
s_k: \quad 
\begin{tikzpicture}[anchorbase]
\begin{scope}[on background layer]
\draw[usual, dot] (0,0) to (0,0.8);
\draw[usual,dot] (0.5,0) to (0.5,0.8);
\draw[usual,dot] (1.5,0) to (1.5,0.8);
\draw[usual, mob=0.5, dot] (2,0) to (2,0.8);
\node at (1,0.8) {$\ldots$};
\node at (1,0) {$\ldots$};
\end{scope}
\node[rectangle, draw, fill=white, minimum width=1.7cm, minimum height = 0.25cm] at (0.75,0.4) {$s_{k-1}$};
\end{tikzpicture}, \quad \begin{tikzpicture}[anchorbase]
\begin{scope}[on background layer]
\draw[usual, dot] (0,0) to (0,0.8);
\draw[usual,dot] (0.5,0) to (0.5,0.8);
\draw[usual,dot] (1.5,0) to (1.5,0.8);
\draw[usual,mob=0.33, mob=0.66, dot] (2,0) to (2,0.8);
\node at (1,0.8) {$\ldots$};
\node at (1,0) {$\ldots$};
\end{scope}
\node[rectangle, draw, fill=white, minimum width=1.7cm, minimum height = 0.25cm] at (0.75,0.4) {$s_{k-1}$};
\end{tikzpicture}.
\end{align*}
The ordering of left cells is then given by first grouping bottom elements together if they contain Möbius dots on the same $i$ components and ordering them according to $s_i$. Now suppose that two groups $I$ and $J$ containing Möbius dots on $i$ and $j$ components respectively. If $i < j$, group $J$ is placed to the right of group $I$ in the ordering, otherwise if $i=j$, components containing Möbius dots that are furthest to the left in the diagrams appear first.

\begin{Example}\label{n2}
For $n=2$ the ordering of left cells is given by
\begin{align*}
\begin{tikzpicture}[anchorbase]
\draw[usual,dot] (0,0) to (0,0.6);
\draw[usual,dot] (0.5,0) to (0.5,0.6);
\end{tikzpicture}, \quad \begin{tikzpicture}[anchorbase]
\draw[usual, mob=0.5,dot] (0,0) to (0,0.6);
\draw[usual,dot] (0.5,0) to (0.5,0.6);
\end{tikzpicture}, \quad \begin{tikzpicture}[anchorbase]
\draw[usual, mob=0.33, mob=0.66,dot] (0,0) to (0,0.6);
\draw[usual,dot] (0.5,0) to (0.5,0.6);
\end{tikzpicture} , \quad \begin{tikzpicture}[anchorbase]
\draw[usual,dot] (0,0) to (0,0.6);
\draw[usual,mob=0.5, dot] (0.5,0) to (0.5,0.6);
\end{tikzpicture} , \quad \begin{tikzpicture}[anchorbase]
\draw[usual,dot] (0,0) to (0,0.6);
\draw[usual,mob=0.33, mob=0.66, dot] (0.5,0) to (0.5,0.6);
\end{tikzpicture} , \quad \begin{tikzpicture}[anchorbase]
\draw[usual, mob=0.5, dot] (0,0) to (0,0.6);
\draw[usual,mob=0.5, dot] (0.5,0) to (0.5,0.6);
\end{tikzpicture} , \quad \begin{tikzpicture}[anchorbase]
\draw[usual,mob=0.33, mob=0.66, dot] (0,0) to (0,0.6);
\draw[usual,mob=0.5, dot] (0.5,0) to (0.5,0.6);
\end{tikzpicture} , \quad \begin{tikzpicture}[anchorbase]
\draw[usual,mob=0.5, dot] (0,0) to (0,0.6);
\draw[usual,mob=0.33, mob=0.66, dot] (0.5,0) to (0.5,0.6);
\end{tikzpicture}, \quad \begin{tikzpicture}[anchorbase]
\draw[usual, mob=0.33, mob=0.66, dot] (0,0) to (0,0.6);
\draw[usual,mob=0.33, mob=0.66, dot] (0.5,0) to (0.5,0.6);
\end{tikzpicture}.
\end{align*}
This pattern generalizes.
\end{Example}

The number of groups $I$ containing Möbius dots on $i$ components is $\binom{n}{i}$, and they each contain $2^{i}$ elements, with the total length of the sequence being $\sum_{i=0}^n\binom{n}{i}\cdot2^i = 3^n$ as expected from \autoref{Eq:DAlgebrasTable}.

We now order the right cells by simply taking the involution of the left cell ordering so that $\gmatrix[0]$ is a symmetric matrix. The next step is to row reduce $\gmatrix[0]$ such that we are left with two diagonal blocks: a $2^n \times 2^n$ minor in the bottom right, which we identify with $B_n$, and a complementary block, from which will emerge $A_n$ following further reductions. 

We begin by observing that the first $u \coloneqq \sum_{i=0}^{n-1}\binom{n}{i}\cdot 2^i$ rows of $\gmatrix[0]$ each contain at least one entry $\paraa_0^a\parab_0^b\parac_0^c$ with $a$ nonzero; for example, take the diagonal. The last $2^n$ rows (and last $2^n$ columns by symmetry), by contrast, have entries with $a=0$ since there is no way to eliminate a Möbius dot. We now consider each of the first $u$ rows, where there are components without any Möbius dots in the corresponding right cell ordering, and subtract a row from the last $2^n$ in which those same components are replaced by a component containing two Möbius dots.

\begin{Example}
For $n=4$, the second row illustrated is subtracted from the first: 
\begin{gather*}
\xy
(0,0)*{\begin{gathered}
\begin{tabular}{C||C|C}
\arrayrulecolor{tomato}
\rcell/\lcell & \begin{tikzpicture}[anchorbase]
\draw[usual,dot] (0,0) to (0,0.25);
\draw[usual,dot] (0.5,0) to (0.5,0.25);
\draw[usual, dot] (1,0) to (1,0.25);
\draw[usual, dot] (1.5,0) to (1.5,0.25);
\end{tikzpicture} & \begin{tikzpicture}[anchorbase]
\node at (0,0) {$\ldots$};
\end{tikzpicture} 
\\
\hline
\hline
\begin{tikzpicture}[anchorbase]
\node at (0,0) {$\ldots$};
\end{tikzpicture}	& \begin{tikzpicture}[anchorbase]
\node at (0,0) {$\ldots$};
\end{tikzpicture} & \begin{tikzpicture}[anchorbase]
\node at (0,0) {$\ldots$};
\end{tikzpicture} 
\\
\hline
\begin{tikzpicture}[anchorbase]
\draw[usual,dot] (0,0) to (0,-0.6);
\draw[usual,dot] (0.5,0) to (0.5,-0.6);
\draw[usual, mob=0.33, mob=0.66, dot] (1,0) to (1,-0.6);
\draw[usual, mob=0.5, dot] (1.5,0) to (1.5,-0.6);
\end{tikzpicture}	& \begin{tikzpicture}[anchorbase]
\draw[usual,dot] (0,0) to (0,0.3);
\draw[usual,dot] (0.5,0) to (0.5,0.3);
\draw[usual, mob=0.25, dot] (1,0) to (1,0.3);
\draw[usual, mob=0, dot] (1.5,0) to (1.5,0.3);
\draw[usual,dot] (0,0) to (0,-0.3);
\draw[usual,dot] (0.5,0) to (0.5,-0.3);
\draw[usual, mob=0.25, dot] (1,0) to (1,-0.3);
\begin{scope}[on background layer]
\draw[usual, dot] (1.5,0) to (1.5,-0.3);
\end{scope}
\end{tikzpicture} & \begin{tikzpicture}[anchorbase]
\node at (0,0) {$\ldots$};
\end{tikzpicture} 
\\
\hline
\begin{tikzpicture}[anchorbase]
\node at (0,0) {$\ldots$};
\end{tikzpicture}	& \begin{tikzpicture}[anchorbase]
\node at (0,0) {$\ldots$};
\end{tikzpicture} & \begin{tikzpicture}[anchorbase]
\node at (0,0) {$\ldots$};
\end{tikzpicture} 
\\
\hline
\begin{tikzpicture}[anchorbase]
\draw[usual, mob=0.33, mob=0.66, dot] (0,0) to (0,-0.6);
\draw[usual, mob=0.33, mob=0.66, dot] (0.5,0) to (0.5,-0.6);
\draw[usual, mob=0.33, mob=0.66, dot] (1,0) to (1,-0.6);
\draw[usual, mob=0.5, dot] (1.5,0) to (1.5,-0.6);
\end{tikzpicture}	& \begin{tikzpicture}[anchorbase]
\draw[usual, mob=0.25, dot] (0,0) to (0,0.3);
\draw[usual,mob=0.25, dot] (0.5,0) to (0.5,0.3);
\draw[usual, mob=0.25, dot] (1,0) to (1,0.3);
\draw[usual, mob=0, dot] (1.5,0) to (1.5,0.3);
\draw[usual,mob=0.25, dot] (0,0) to (0,-0.3);
\draw[usual,mob=0.25, dot] (0.5,0) to (0.5,-0.3);
\draw[usual, mob=0.25, dot] (1,0) to (1,-0.3);
\begin{scope}[on background layer]
\draw[usual, dot] (1.5,0) to (1.5,-0.3);
\end{scope}
\end{tikzpicture} & \begin{tikzpicture}[anchorbase]
\node at (0,0) {$\ldots$};
\end{tikzpicture} \\
\hline
\begin{tikzpicture}[anchorbase]
\node at (0,0) {$\ldots$};
\end{tikzpicture}	& \begin{tikzpicture}[anchorbase]
\node at (0,0) {$\ldots$};
\end{tikzpicture} & \begin{tikzpicture}[anchorbase]
\node at (0,0) {$\ldots$};
\end{tikzpicture} 
\end{tabular}
\end{gathered}};
\endxy
\quad
\raisebox{-0.3cm}{$\leftrightsquigarrow
\gmatrix[0]=
\begin{pmatrix}
\cdots & \cdots
\\
\paraa_0^2\parab_0\parac_0 & \cdots
\\
\cdots & \cdots\\
\parab_0\parac_0^3 & \cdots \\
\cdots & \cdots
\end{pmatrix}$} \quad 
\raisebox{-0.3cm}{$\sim \quad  \begin{pmatrix}
\cdots & \cdots
\\
(\paraa_0^2-\parac_0^2)\parab_0\parac_0 & \cdots
\\
\cdots & \cdots\\
\parab_0\parac_0^3 & \cdots \\
\cdots & \cdots
\end{pmatrix},$}
\end{gather*}
as desired.
\end{Example}

Importantly, since 
\begin{gather*}
\begin{tikzpicture}[anchorbase]
\draw[usual, mob=0.25, mob=0.5, mob=0.75] (0,0) to (0,1);
\end{tikzpicture}
=
\begin{tikzpicture}[anchorbase]
\draw[usual,  mob=0.5] (0,0) to (0,1);
\end{tikzpicture}
\hspace{1cm} \text{and} \hspace{1cm} \begin{tikzpicture}[anchorbase]
\draw[usual, mob=0.2, mob=0.4, mob=0.6, mob=0.8] (0,0) to (0,1);
\end{tikzpicture}
=
\begin{tikzpicture}[anchorbase]
\draw[usual, mob=0.66, mob=0.33] (0,0) to (0,1);
\end{tikzpicture},
\end{gather*}
the resulting $u \times 2^n$ minor in the upper right corner is the (identically) zero matrix as the replacement does not alter the number of Möbius dots originally present. 

\begin{Remark}
When performing the reduction, we are concerned with generic parameters $\paraa_0, \parab_0, \parac_0$, and so are in particular, not identically zero. The point is to produce identically zero entries in the reduction. 
\end{Remark}

With the first step complete, we now examine $B_n$, the $2^n \times 2^n$ minor in the bottom right corner. By construction of the cell ordering, it is not too difficult to see that $B_n$ is of the form
\begin{align*}
\begin{pmatrix}
\parac_0 B_{n-1}& \parab_0 B_{n-1} \\
\parab_0 B_{n-1} & \parac_0 B_{n-1}
\end{pmatrix} \hspace{0.25cm} = \hspace{0.25cm}\begin{pmatrix}
\parac_0 & \parab_0  \\
\parab_0  & \parac_0 
\end{pmatrix} \otimes B_{n-1},
\end{align*}
and therefore $\det(B_n)=(\parac_0^2-\parab_0^2)^{n\cdot 2^{n-1}}$ by induction.

We now turn our attention to the remaining $u \times u$ minor in the top left corner, which is currently of the form 
\begin{align}\label{preaform}
\begin{pmatrix}
\paraa_0^n - \parac_0^n  & \cdots & * & \cdots & * \\
\vdots & \ddots & \vdots & \vdots & \vdots \\
* & \cdots & (\paraa_0^i - \parac_0^i)B_{n-i} & \cdots & * \\
\vdots & \vdots & \vdots & \ddots & \vdots \\
* & \cdots & * & \cdots & (\paraa_0 - \parac_0)B_{n-1}
\end{pmatrix},
\end{align}
with the number of blocks of form $(\paraa_0^i - \parac_0^i)B_{n-i}$ on the diagonal given by $\binom{n}{i}$. We would like to reduce \autoref{preaform} such that the entries above the diagonal blocks are zero. 

\begin{Example}
We work through the case $A_2$ as an illustration, where the ordering of left cells is given in \autoref{n2}. Following the first stage reduction, we obtain a matrix of size $u \times u$ where $u=\sum_{i=0}^{2-1}\binom{2}{i}\cdot 2^i=1+4=5$,
\begin{gather*}
\begin{blockarray}{cccccc}
&  \begin{tikzpicture}[anchorbase]
\draw[usual,dot] (0,0) to (0,0.6);
\draw[usual,dot] (0.5,0) to (0.5,0.6);
\end{tikzpicture}  & \begin{tikzpicture}[anchorbase]
\draw[usual, mob=0.5,dot] (0,0) to (0,0.6);
\draw[usual,dot] (0.5,0) to (0.5,0.6);
\end{tikzpicture} & \begin{tikzpicture}[anchorbase]
\draw[usual, mob=0.33, mob=0.66,dot] (0,0) to (0,0.6);
\draw[usual,dot] (0.5,0) to (0.5,0.6);
\end{tikzpicture} & \begin{tikzpicture}[anchorbase]
\draw[usual,dot] (0,0) to (0,0.6);
\draw[usual,mob=0.5, dot] (0.5,0) to (0.5,0.6);
\end{tikzpicture} & \begin{tikzpicture}[anchorbase]
\draw[usual,dot] (0,0) to (0,0.6);
\draw[usual,mob=0.33, mob=0.66, dot] (0.5,0) to (0.5,0.6);
\end{tikzpicture}\\
\begin{block}{c(ccccc)}
\begin{tikzpicture}[anchorbase]
\draw[usual,dot] (0,0) to (0,-0.6);
\draw[usual,dot] (0.5,0) to (0.5,-0.6);
\end{tikzpicture} & \paraa_0^2 - \parac_0^2 & (\paraa_0-\parac_0)\parab_0 & (\paraa_0-\parac_0)\parac_0 & (\paraa_0-\parac_0)\parab_0 & (\paraa_0-\parac_0)\parac_0 \\
\begin{tikzpicture}[anchorbase]
\draw[usual, mob=0.5,dot] (0,0) to (0,-0.6);
\draw[usual,dot] (0.5,0) to (0.5,-0.6);
\end{tikzpicture} & (\paraa_0-\parac_0)\parab_0 & (\paraa_0-\parac_0)\parac_0 & (\paraa_0-\parac_0)\parab_0 & 0 & 0 \\
\begin{tikzpicture}[anchorbase]
\draw[usual, mob=0.33, mob=0.66,dot] (0,0) to (0,-0.6);
\draw[usual,dot] (0.5,0) to (0.5,-0.6);
\end{tikzpicture} & (\paraa_0-\parac_0)\parac_0 & (\paraa_0-\parac_0)\parab_0 & (\paraa_0-\parac_0)\parac_0 & 0 & 0 \\
 \begin{tikzpicture}[anchorbase]
\draw[usual,dot] (0,0) to (0,-0.6);
\draw[usual,mob=0.5, dot] (0.5,0) to (0.5,-0.6);
\end{tikzpicture} & (\paraa_0-\parac_0)\parab_0  & 0 & 0 & (\paraa_0-\parac_0)\parac_0 & (\paraa_0-\parac_0)\parab_0 \\
 \begin{tikzpicture}[anchorbase]
\draw[usual,dot] (0,0) to (0,-0.6);
\draw[usual,mob=0.33, mob=0.66, dot] (0.5,0) to (0.5,-0.6);
\end{tikzpicture}& (\paraa_0-\parac_0)\parac_0 & 0 & 0 & (\paraa_0-\parac_0)\parab_0 & (\paraa_0-\parac_0)\parac_0 \\
\end{block}
\end{blockarray}
\,.
\end{gather*}
We can immediately see three diagonal blocks corresponding to the three groupings of left cells: $\binom{2}{2-2}=\binom{2}{0}=1$ block of $(\paraa_0^2 - \parac_0^2)B_0=(\paraa_0^2 - \parac_0^2)$ and $\binom{2}{2-1}=\binom{2}{1}=2$ blocks of 
\begin{align*}
    (\paraa_0 - \parac_0)B_{1} =     (\paraa_0 - \parac_0)\begin{pmatrix}
\parac_0 & \parab_0  \\
\parab_0  & \parac_0 
\end{pmatrix}.
\end{align*}

Starting at the bottom right above the diagonal blocks at position $(3,5)$, we see that the entry is zero and so are the entries corresponding to the rest of the final left cell grouping on row 3 (and also the corresponding right cell grouping extending to row 2). 

We are left with the first row: starting with the final left cell grouping corresponding to $(1, 4)$ and $(1, 5)$, we note that a right cell representative with matching parts without Möbius dots and two Möbius dots otherwise corresponds to the last row. Therefore, subtracting row 5 from row 1 sets the entries $(1, 4)$ and $(1, 5)$ to zero. The entries $(1, 2)$ and $(1, 3)$ corresponding to the next left cell grouping can be set to zero in the same manner by subtracting row 3 from row 1. 

We are then left with final block lower triangular form with the $(1, 1)$ entry augmented by:
\begin{align*}
\paraa_0^2 - \parac_0^2-2(\paraa_0-\parac_0)\parac_0 =\paraa_0^2 - \parac_0^2-\sum_{k=1}^{2-1}\binom{2}{k}(\paraa_0^{2-k}\parac_0^k - \parac_0^2),
\end{align*}
while the other diagonal blocks are scaled by $\paraa_0-\parac_0 =\paraa_0 - \parac_0-\sum_{k=1}^{1-1}\binom{1}{k}(\paraa_0^{1-k}\parac_0^k - \parac_0^1)$, where the final sum is the empty sum by definition.
\end{Example}

The more general procedure is as follows: beginning at the bottom right and working backwards along rows until encountering the block diagonal entries, we see that an entry is zero (nonzero) whenever the corresponding left and right cell representatives in the ordering have no (at least one) components in common that are without Möbius dots. 

Furthermore, if an entry is zero, all other entries in the same row falling under the same left cell grouping are zero also. Therefore, given a row $\ell$ whose right cell representative contains exactly $i$ parts without Möbius dots, there are $\sum_{k=1}^{i-1}\binom{i}{k}$ left cell groupings to the right of the block diagonal group associated with nonzero entries; for each of these groupings, we subtract from row $\ell$ a lower row whose right cell representative has matching parts without Möbius dots and two Möbius dots on all other components. (Note that this row either intersects or lies below the diagonal block for the corresponding grouping.) 

From here, we see that all entries above the diagonal blocks are zero, and the effect on the diagonal blocks is the following augmentation:
\begin{align*}
\Big((\paraa_0^i - \parac_0^i)-\sum_{k=1}^{i-1}\binom{i}{k}(\paraa_0^{i-k}\parac_0^k - \parac_0^i)  \Big)  B_{n-i},
\end{align*}
for $i=1, \ldots, n$, which completes the proof. 

\textit{(iii).} The proof is identical to the corresponding case in \cite[Theorem 4B.15]{Tu-sandwich-cellular}; namely, $\mrk(\gmatrix[{n-1}])=\binom{n}{n-1}\cdot3^{n-(n-1)}=3n$ as given in (ii) above.
\end{proof}

\begin{Example}
Assume the conditions set out in \autoref{P:DAlgebrasGram}.(ii). Fix $n=5$, $\lambda=2$, and further assume that not all $\paraa_0, \parab_0, \parac_0$ are zero. 

By \autoref{T:DAlgebrasDSimples}, the dimension of the simple $\mromon[5]{\paraa, \parab, \parac}$-modules with apex $2$, given any simple $\sand[2]$-module $K$, is 
\begin{gather*}
\dim\big(\lmod[{2,K}]\big)
=\mrk(\gmatrix[2]).
\end{gather*}

Now, by \autoref{P:DAlgebrasGram}.(ii), we have
\begin{gather*}
\mrk(\gmatrix[2]) = \binom{5}{2}\cdot 3^{5-2}
=270,
\end{gather*}
whenever
\begin{gather*}
(\paraa_0^3-3\paraa_0^2\parac_0 -3\paraa_0\parac_0^2+5\parac_0^3)(\paraa_0-\parac_0)^{24} 
(\parac_0^2-\parab_0^2)^{27}
\neq 0;
\end{gather*}
we can take \eg \, $\paraa_0=\parab_0=1$ and $\parac_0=0$. 

On the other hand, if we take instead $\paraa_0=\parab_0=\parac_0=1$, then by the arguments set out in the proof of \autoref{P:DAlgebrasGram}.(ii), $\gmatrix[2]$ consists of $\binom{5}{2}=10$ blocks of $\gmatrix[0]$ evaluated for the case $5-2=3$, which has rank one. Therefore, we have
\begin{gather*}
\dim\big(\lmod[{2,K}]\big)=\mrk(\gmatrix[2]) = 10,
\end{gather*}
and the dimensions are ten.
\end{Example}


\begin{thebibliography}{BHMV95}
	
	\bibitem[AST18]{AnStTu-cellular-tilting}
	H.H. Andersen, C.~Stroppel, and D.~Tubbenhauer.
	\newblock Cellular structures using {$\mathrm{U}_q$}-tilting modules.
	\newblock {\em Pacific J. Math.}, 292(1):21--59, 2018.
	\newblock URL: \url{https://arxiv.org/abs/1503.00224}, \href
	{https://doi.org/10.2140/pjm.2018.292.21}
	{\path{doi:10.2140/pjm.2018.292.21}}.
	
	\bibitem[BM23]{BaMc-graph-coloring}
	S.~Baldridge and B.~McCarty.
	\newblock A topological quantum field theory approach to graph coloring.
	\newblock 2023.
	\newblock URL: \url{https://arxiv.org/abs/2303.12010}.
	
	\bibitem[BN05]{BaNa-tangles-cobordisms}
	D.~Bar-Natan.
	\newblock Khovanov's homology for tangles and cobordisms.
	\newblock {\em Geom. Topol.}, 9:1443--1499, 2005.
	\newblock URL: \url{https://arxiv.org/abs/math/0410495}, \href
	{https://doi.org/10.2140/gt.2005.9.1443} {\path{doi:10.2140/gt.2005.9.1443}}.
	
	\bibitem[BW10]{BeWa-eqft}
	A.~Beliakova and E.~Wagner.
	\newblock On link homology theories from extended cobordisms.
	\newblock {\em Quantum Topol.}, 1(4):379--398, 2010.
	\newblock URL: \url{https://arxiv.org/abs/0910.5050}, \href
	{https://doi.org/10.4171/QT/9} {\path{doi:10.4171/QT/9}}.
	
	\bibitem[BT18]{BeTh-hwt-findim-algebras}
	G.~Bellamy and U.~Thiel.
	\newblock Highest weight theory for finite-dimensional graded algebras with
	triangular decomposition.
	\newblock {\em Adv. Math.}, 330:361--419, 2018.
	\newblock URL: \url{https://arxiv.org/abs/1705.08024}, \href
	{https://doi.org/10.1016/j.aim.2018.03.011}
	{\path{doi:10.1016/j.aim.2018.03.011}}.
	
	\bibitem[BHMV95]{BlHaMaVo-tqft-kauffman-bracket}
	C.~Blanchet, N.~Habegger, G.~Masbaum, and P.~Vogel.
	\newblock Topological quantum field theories derived from the {K}auffman
	bracket.
	\newblock {\em Topology}, 34(4):883--927, 1995.
	\newblock \href {https://doi.org/10.1016/0040-9383(94)00051-4}
	{\path{doi:10.1016/0040-9383(94)00051-4}}.
	
	\bibitem[Bra37]{Br-brauer-algebra-original}
	R.~Brauer.
	\newblock On algebras which are connected with the semisimple continuous
	groups.
	\newblock {\em Ann. of Math. (2)}, 38(4):857--872, 1937.
	\newblock \href {https://doi.org/10.2307/1968843} {\path{doi:10.2307/1968843}}.
	
	\bibitem[Bro55]{Br-gen-matrix-algebras}
	W.P.~Brown.
	\newblock Generalized matrix algebras.
	\newblock {\em Canadian J. Math.}, 7:188--190, 1955.
	\newblock \href {https://doi.org/10.4153/CJM-1955-023-2}
	{\path{doi:10.4153/CJM-1955-023-2}}.
	
	\bibitem[Com20]{Co-jelly}
	J.~Comes.
	\newblock Jellyfish partition categories.
	\newblock {\em Algebr. Represent. Theory}, 23(2):327--347, 2020.
	\newblock URL: \url{https://arxiv.org/abs/1612.05182}, \href
	{https://doi.org/10.1007/s10468-018-09851-7}
	{\path{doi:10.1007/s10468-018-09851-7}}.
	
	\bibitem[CO11]{CoOs-blocks-deligne-sym-group}
	J.~Comes and V.~Ostrik.
	\newblock On blocks of {D}eligne's category
	{$\underline{\mathrm{Re}}\mathrm{p}(S_{t})$}.
	\newblock {\em Adv. Math.}, 226(2):1331--1377, 2011.
	\newblock URL: \url{https://arxiv.org/abs/0910.5695}, \href
	{https://doi.org/10.1016/j.aim.2010.08.010}
	{\path{doi:10.1016/j.aim.2010.08.010}}.
	
	\bibitem[COT24]{CoOsTu-growth}
	K.~Coulembier, V.~Ostrik, and D.~Tubbenhauer.
	\newblock Growth rates of the number of indecomposable summands in tensor
	powers.
	\newblock {\em Algebr. Represent. Theory}, 27(2):1033--1062, 2024.
	\newblock URL: \url{https://arxiv.org/abs/2301.00885}, \href
	{https://doi.org/10.1007/s10468-023-10245-7}
	{\path{doi:10.1007/s10468-023-10245-7}}.
	
	\bibitem[Cze23]{Cz-mobius-tqft}
	A.~Czenky.
	\newblock Unoriented 2-dimensional {TQFT}s and the category
	{$Rep(S_t\wr\mathbb{Z}_2)$}.
	\newblock 2023.
	\newblock URL: \url{https://arxiv.org/abs/2306.08826}.
	
	\bibitem[DT24]{DeTu-dicyclic}
	P.~DeBello and D.~Tubbenhauer.
	\newblock Diagrammatics for dicyclic groups.
	\newblock 2024.
	\newblock URL: \url{https://arxiv.org/abs/2412.12376}.
	
	\bibitem[Del07]{De-cat-st}
	P.~Deligne.
	\newblock La cat{\'e}gorie des repr{\'e}sentations du groupe sym{\'e}trique
	{$S_t$}, lorsque {$t$} n'est pas un entier naturel.
	\newblock In {\em Algebraic groups and homogeneous spaces}, volume~19 of {\em
		Tata Inst. Fund. Res. Stud. Math.}, pages 209--273. Tata Inst. Fund. Res.,
	Mumbai, 2007.
	
	\bibitem[Eas06]{Ea-cellular-semigroups}
	J.~East.
	\newblock Cellular algebras and inverse semigroups.
	\newblock {\em J. Algebra}, 296(2):505--519, 2006.
	\newblock \href {https://doi.org/10.1016/j.jalgebra.2005.04.027}
	{\path{doi:10.1016/j.jalgebra.2005.04.027}}.
	
	\bibitem[EST17]{EhStTu-blanchet-khovanov}
	M.~Ehrig, C.~Stroppel, and D.~Tubbenhauer.
	\newblock The {B}lanchet--{K}hovanov algebras.
	\newblock In {\em Categorification and higher representation theory}, volume
	683 of {\em Contemp. Math.}, pages 183--226. Amer. Math. Soc., Providence,
	RI, 2017.
	\newblock URL: \url{https://arxiv.org/abs/1510.04884}, \href
	{https://doi.org/10.1090/conm/683} {\path{doi:10.1090/conm/683}}.
	
	\bibitem[EK10]{ElKh-diagrams-soergel}
	B.~Elias and M.~Khovanov.
	\newblock Diagrammatics for {S}oergel categories.
	\newblock {\em Int. J. Math. Math. Sci.}, pages Art. ID 978635, 58, 2010.
	\newblock URL: \url{https://arxiv.org/abs/0902.4700}, \href
	{https://doi.org/10.1155/2010/978635} {\path{doi:10.1155/2010/978635}}.
	
	\bibitem[EMM12]{EMM-twisted-duality}
	J.A.~Ellis-Monaghan and I.~Moffatt.
	\newblock Twisted duality for embedded graphs.
	\newblock {\em Trans. Amer. Math. Soc.}, 364(3):1529--1569, 2012.
	\newblock URL: \url{https://arxiv.org/abs/0906.5557}, \href
	{https://doi.org/10.1090/S0002-9947-2011-05529-7}
	{\path{doi:10.1090/S0002-9947-2011-05529-7}}.
	
	\bibitem[EGNO15]{EtGeNiOs-tensor-categories}
	P.~Etingof, S.~Gelaki, D.~Nikshych, and V.~Ostrik.
	\newblock {\em Tensor categories}, volume 205 of {\em Mathematical Surveys and
		Monographs}.
	\newblock American Mathematical Society, Providence, RI, 2015.
	\newblock \href {https://doi.org/10.1090/surv/205}
	{\path{doi:10.1090/surv/205}}.
	
	\bibitem[EO22]{EtOs-semisimple}
	P.~Etingof and V.~Ostrik.
	\newblock On semisimplification of tensor categories.
	\newblock In {\em Representation theory and algebraic geometry---a conference
		celebrating the birthdays of {S}asha {B}eilinson and {V}ictor {G}inzburg},
	Trends Math., pages 3--35. Birkh\"auser/Springer, Cham, [2022] \copyright
	2022.
	\newblock URL: \url{https://arxiv.org/abs/1801.04409}, \href
	{https://doi.org/10.1007/978-3-030-82007-7\_1}
	{\path{doi:10.1007/978-3-030-82007-7\_1}}.
	
	\bibitem[FG95]{FiGr-canonical-cases-brauer}
	S.~Fishel and I.~Grojnowski.
	\newblock Canonical bases for the {B}rauer centralizer algebra.
	\newblock {\em Math. Res. Lett.}, 2(1):15--26, 1995.
	\newblock \href {https://doi.org/10.4310/MRL.1995.v2.n1.a3}
	{\path{doi:10.4310/MRL.1995.v2.n1.a3}}.
	
	\bibitem[FST25]{FrStTu-twists}
	M.~Fresacher, W.~Stewart, and D.~Tubbenhauer.
	\newblock Generalized diagram categories and monoids, and their
	representations.
	\newblock 2025.
	\newblock URL: \url{https://arxiv.org/abs/2512.17177}.
	
	\bibitem[GMS09]{GaMaSt-irreps-semigroups}
	O.~Ganyushkin, V.~Mazorchuk, and B.~Steinberg.
	\newblock On the irreducible representations of a finite semigroup.
	\newblock {\em Proc. Amer. Math. Soc.}, 137(11):3585--3592, 2009.
	\newblock URL: \url{https://arxiv.org/abs/0712.2076}, \href
	{https://doi.org/10.1090/S0002-9939-09-09857-8}
	{\path{doi:10.1090/S0002-9939-09-09857-8}}.
	
	\bibitem[GL96]{GrLe-cellular}
	J.J.~Graham and G.~Lehrer.
	\newblock Cellular algebras.
	\newblock {\em Invent. Math.}, 123(1):1--34, 1996.
	\newblock \href {https://doi.org/10.1007/BF01232365}
	{\path{doi:10.1007/BF01232365}}.
	
	\bibitem[Gre51]{Gr-structure-semigroups}
	J.A.~Green.
	\newblock On the structure of semigroups.
	\newblock {\em Ann. of Math. (2)}, 54:163--172, 1951.
	\newblock \href {https://doi.org/10.2307/1969317} {\path{doi:10.2307/1969317}}.
	
	\bibitem[Gre98]{Gr-dots-tl}
	R.M.~Green.
	\newblock Generalized {T}emperley--{L}ieb algebras and decorated tangles.
	\newblock {\em J. Knot Theory Ramifications}, 7(2):155--171, 1998.
	\newblock URL: \url{https://arxiv.org/abs/q-alg/9712018}, \href
	{https://doi.org/10.1142/S0218216598000103}
	{\path{doi:10.1142/S0218216598000103}}.
	
	\bibitem[GT25]{GrTu-diagram-growth}
	J.~Gruber and D.~Tubbenhauer.
	\newblock Growth problems in diagram categories.
	\newblock {\em Bull. Lond. Math. Soc.}, 57(11):3454--3469, 2025.
	\newblock URL: \url{https://arxiv.org/abs/2503.00685}, \href
	{https://doi.org/10.1112/blms.70163} {\path{doi:10.1112/blms.70163}}.
	
	\bibitem[GW15]{GuWi-almost-cellular}
	N.~Guay and S.~Wilcox.
	\newblock Almost cellular algebras.
	\newblock {\em J. Pure Appl. Algebra}, 219(9):4105--4116, 2015.
	\newblock \href {https://doi.org/10.1016/j.jpaa.2015.02.010}
	{\path{doi:10.1016/j.jpaa.2015.02.010}}.
	
	\bibitem[HR05]{HaRa-partition-algebras}
	T.~Halverson and A.~Ram.
	\newblock Partition algebras.
	\newblock {\em European J. Combin.}, 26(6):869--921, 2005.
	\newblock URL: \url{https://arxiv.org/abs/math/0401314}, \href
	{https://doi.org/10.1016/j.ejc.2004.06.005}
	{\path{doi:10.1016/j.ejc.2004.06.005}}.
	
	\bibitem[HT25a]{HeTu-affine-monoid}
	D.~He and D.~Tubbenhauer.
	\newblock Affine diagram categories, algebras and monoids.
	\newblock 2025.
	\newblock URL: \url{https://arxiv.org/abs/2512.05510}.
	
	\bibitem[HT25b]{HeTu-monoid-growth}
	D.~He and D.~Tubbenhauer.
	\newblock Growth problems for representations of finite monoids.
	\newblock {\em North-West. Eur. J. Math.}, 11:103--117, i, 2025.
	\newblock URL: \url{https://arxiv.org/abs/2502.02849}.
	
	\bibitem[JK81]{JaKe-reptheory-symmetric-group}
	G.~James and A.~Kerber.
	\newblock {\em The representation theory of the symmetric group}, volume~16 of
	{\em Encyclopedia of Mathematics and its Applications}.
	\newblock Addison-Wesley Publishing Co., Reading, Mass., 1981.
	\newblock With a foreword by P. M. Cohn, With an introduction by Gilbert de B.
	Robinson.
	
	\bibitem[Jon94]{Jo-potts-sym}
	V.~F.~R. Jones.
	\newblock The {P}otts model and the symmetric group.
	\newblock In {\em Subfactors ({K}yuzeso, 1993)}, pages 259--267. World Sci.
	Publ., River Edge, NJ, 1994.
	
	\bibitem[KMY19]{KaMaYu-extension-tl}
	Z.~K{\'a}d{\'a}r, P.P.~Martin, and S.~Yu.
	\newblock On geometrically defined extensions of the {T}emperley--{L}ieb
	category in the {B}rauer category.
	\newblock {\em Math. Z.}, 293(3-4):1247--1276, 2019.
	\newblock URL: \url{https://arxiv.org/abs/1401.1774}, \href
	{https://doi.org/10.1007/s00209-019-02246-4}
	{\path{doi:10.1007/s00209-019-02246-4}}.
	
	\bibitem[Koc04]{Ko-tqfts}
	J.~Kock.
	\newblock {\em Frobenius algebras and 2{D} topological quantum field theories},
	volume~59 of {\em London Mathematical Society Student Texts}.
	\newblock Cambridge University Press, Cambridge, 2004.
	
	\bibitem[KOK22]{KhOsKo-cobordisms}
	M.~Khovanov, V.~Ostrik, and Y.~Kononov.
	\newblock Two-dimensional topological theories, rational functions and their
	tensor envelopes.
	\newblock {\em Selecta Math. (N.S.)}, 28(4):Paper No. 71, 68, 2022.
	\newblock URL: \url{https://arxiv.org/abs/2011.14758}, \href
	{https://doi.org/10.1007/s00029-022-00785-z}
	{\path{doi:10.1007/s00029-022-00785-z}}.
	
	\bibitem[KS24]{KhSa-cobordisms}
	M.~Khovanov and R.~Sazdanovic.
	\newblock Bilinear pairings on two-dimensional cobordisms and generalizations
	of the {D}eligne category.
	\newblock {\em Fund. Math.}, 264(1):1--20, 2024.
	\newblock URL: \url{https://arxiv.org/abs/2007.11640}, \href
	{https://doi.org/10.4064/fm283-8-2023} {\path{doi:10.4064/fm283-8-2023}}.
	
	\bibitem[KST24]{KhSiTu-monoidal-cryptography}
	M.~Khovanov, M.~Sitaraman, and D.~Tubbenhauer.
	\newblock Monoidal categories, representation gap and cryptography.
	\newblock {\em Trans. Amer. Math. Soc. Ser. B}, 11:329--395, 2024.
	\newblock URL: \url{https://arxiv.org/abs/2201.01805}, \href
	{https://doi.org/10.1090/btran/151} {\path{doi:10.1090/btran/151}}.
	
	\bibitem[KX12]{KoXi-affine-cellular}
	S.~K{\"o}nig and C.~Xi.
	\newblock Affine cellular algebras.
	\newblock {\em Adv. Math.}, 229(1):139--182, 2012.
	\newblock \href {https://doi.org/10.1016/j.aim.2011.08.010}
	{\path{doi:10.1016/j.aim.2011.08.010}}.
	
	\bibitem[Kup96]{Ku-spiders-rank-2}
	G.~Kuperberg.
	\newblock Spiders for rank {$2$} {L}ie algebras.
	\newblock {\em Comm. Math. Phys.}, 180(1):109--151, 1996.
	\newblock URL: \url{https://arxiv.org/abs/q-alg/9712003}, \href
	{https://doi.org/10.1007/bf02101184} {\path{doi:10.1007/bf02101184}}.
	
	\bibitem[LRMD23]{LaReMoDu-affine-tl-monoid}
	A.~Langlois-R{\'e}millard and A.~Morin-Duchesne.
	\newblock Uncoiled affine {T}emperley--{L}ieb algebras and their
	{W}enzl--{J}ones projectors.
	\newblock 2023.
	\newblock URL: \url{https://arxiv.org/abs/2302.12782}.
	
	\bibitem[Liu25]{Li-cyclic-monoid}
	J.~Liu.
	\newblock Representations of cyclic diagram monoids.
	\newblock 2025.
	\newblock URL: \url{https://arxiv.org/abs/2511.15945}.
	
	\bibitem[Mar94]{Ma-potts-sym}
	P.~Martin.
	\newblock Temperley--{L}ieb algebras for nonplanar statistical mechanics---the
	partition algebra construction.
	\newblock {\em J. Knot Theory Ramifications}, 3(1):51--82, 1994.
	\newblock \href {https://doi.org/10.1142/S0218216594000071}
	{\path{doi:10.1142/S0218216594000071}}.
	
	\bibitem[MS94]{MaSa-blob}
	P.~Martin and H.~Saleur.
	\newblock The blob algebra and the periodic {T}emperley--{L}ieb algebra.
	\newblock {\em Lett. Math. Phys.}, 30(3):189--206, 1994.
	\newblock URL: \url{https://arxiv.org/abs/hep-th/9302094}, \href
	{https://doi.org/10.1007/BF00805852} {\path{doi:10.1007/BF00805852}}.
	
	\bibitem[MQS15]{MaQuSt-characters-monoids}
	A.M.~Masuda, L.~Quoos, and B.~Steinberg.
	\newblock Character theory of monoids over an arbitrary field.
	\newblock {\em J. Algebra}, 431:107--126, 2015.
	\newblock URL: \url{https://arxiv.org/abs/1410.7107}, \href
	{https://doi.org/10.1016/j.jalgebra.2015.02.017}
	{\path{doi:10.1016/j.jalgebra.2015.02.017}}.
	
	\bibitem[MT24]{MaTu-klrw-algebras}
	A.~Mathas and D.~Tubbenhauer.
	\newblock Cellularity and subdivision of {KLR} and weighted {KLRW} algebras.
	\newblock {\em Math. Ann.}, 389(3):3043--3122, 2024.
	\newblock URL: \url{https://arxiv.org/abs/2111.12949}, \href
	{https://doi.org/10.1007/s00208-023-02660-4}
	{\path{doi:10.1007/s00208-023-02660-4}}.
	
	\bibitem[MT23a]{MaTu-klrw-algebras-bad}
	A.~Mathas and D.~Tubbenhauer.
	\newblock Cellularity for weighted {KLRW} algebras of types {$B$}, {$A^{(2)}$},
	{$D^{(2)}$}.
	\newblock {\em J. Lond. Math. Soc. (2)}, 107(3):1002--1044, 2023.
	\newblock URL: \url{https://arxiv.org/abs/2201.01998}, \href
	{https://doi.org/10.1112/jlms.12706} {\path{doi:10.1112/jlms.12706}}.
	
	\bibitem[MT23b]{MaTu-klrw-crystal}
	A.~Mathas and D.~Tubbenhauer.
	\newblock Cellularity of {KLR} and weighted {KLRW} algebras via crystals.
	\newblock 2023.
	\newblock URL: \url{https://arxiv.org/abs/2309.13867}.
	
	\bibitem[MPS17]{MoPeSn-categories-trivalent-vertex}
	S.~Morrison, E.~Peters, and N.~Snyder.
	\newblock Categories generated by a trivalent vertex.
	\newblock {\em Selecta Math. (N.S.)}, 23(2):817--868, 2017.
	\newblock URL: \url{https://arxiv.org/abs/1501.06869}, \href
	{https://doi.org/10.1007/s00029-016-0240-3}
	{\path{doi:10.1007/s00029-016-0240-3}}.
	
	\bibitem[RTW32]{RuTeWe-sl2}
	G.~Rumer, E.~Teller, and H.~Weyl.
	\newblock Eine f{\"u}r die {V}alenztheorie geeignete {B}asis der bin{\"a}ren
	{V}ektorinvarianten.
	\newblock {\em Nachrichten von der Ges. der Wiss. Zu G{\"o}ttingen. Math.-Phys.
		Klasse}, pages 498--504, 1932.
	\newblock In German.
	
	\bibitem[SS22]{SaSn-triangular}
	S.V.~Sam and A.~Snowden.
	\newblock The representation theory of {B}rauer categories {I}: {T}riangular
	categories.
	\newblock {\em Appl. Categ. Structures}, 30(6):1203--1256, 2022.
	\newblock URL: \url{https://arxiv.org/abs/2006.04328}, \href
	{https://doi.org/10.1007/s10485-022-09689-7}
	{\path{doi:10.1007/s10485-022-09689-7}}.
	
	\bibitem[Ste16]{St-rep-monoid}
	B.~Steinberg.
	\newblock {\em Representation theory of finite monoids}.
	\newblock Universitext. Springer, Cham, 2016.
	\newblock \href {https://doi.org/10.1007/978-3-319-43932-7}
	{\path{doi:10.1007/978-3-319-43932-7}}.
	
	\bibitem[Tag13]{Ta-uhqft}
	K.~Tagami.
	\newblock A {K}hovanov type invariant derived from an unoriented {HQFT} for
	links in thickened surfaces.
	\newblock {\em Internat. J. Math.}, 24(10):1350078, 28, 2013.
	\newblock URL: \url{https://arxiv.org/abs/1311.1909}, \href
	{https://doi.org/10.1142/S0129167X1350078X}
	{\path{doi:10.1142/S0129167X1350078X}}.
	
	\bibitem[Tub24a]{Tu-web-reps}
	D.~Tubbenhauer.
	\newblock On rank one 2-representations of web categories.
	\newblock {\em Algebr. Comb.}, 7(6):1813--1843, 2024.
	\newblock URL: \url{https://arxiv.org/abs/2307.00785}, \href
	{https://doi.org/10.5802/alco.389} {\path{doi:10.5802/alco.389}}.
	
	\bibitem[Tub25]{Tu-qt}
	D.~Tubbenhauer.
	\newblock Quantum topology without topology.
	\newblock 2025.
	\newblock URL: \url{https://arxiv.org/abs/2307.00785}.
	
	\bibitem[Tub24b]{Tu-sandwich-cellular}
	D.~Tubbenhauer.
	\newblock Sandwich cellularity and a version of cell theory.
	\newblock {\em Rocky Mountain J. Math.}, 54(6):1733--1773, 2024.
	\newblock URL: \url{https://arxiv.org/abs/2206.06678}, \href
	{https://doi.org/10.1216/rmj.2024.54.1733}
	{\path{doi:10.1216/rmj.2024.54.1733}}.
	
	\bibitem[Tub14]{Tu-virtual-khovanov}
	D.~Tubbenhauer.
	\newblock Virtual {K}hovanov homology using cobordisms.
	\newblock {\em J. Knot Theory Ramifications}, 23(9):1450046, 91, 2014.
	\newblock URL: \url{https://arxiv.org/abs/1111.0609}, \href
	{https://doi.org/10.1142/S0218216514500461}
	{\path{doi:10.1142/S0218216514500461}}.
	
	\bibitem[TV23]{TuVa-handlebody}
	D.~Tubbenhauer and P.~Vaz.
	\newblock Handlebody diagram algebras.
	\newblock {\em Rev. Mat. Iberoam.}, 39(3):845--896, 2023.
	\newblock URL: \url{https://arxiv.org/abs/2105.07049}, \href
	{https://doi.org/10.4171/rmi/1356} {\path{doi:10.4171/rmi/1356}}.
	
	\bibitem[TT06]{TuTu-utqft}
	V.~Turaev and P.~Turner.
	\newblock Unoriented topological quantum field theory and link homology.
	\newblock {\em Algebr. Geom. Topol.}, 6:1069--1093, 2006.
	\newblock URL: \url{https://arxiv.org/abs/math/0506229}, \href
	{https://doi.org/10.2140/agt.2006.6.1069}
	{\path{doi:10.2140/agt.2006.6.1069}}.
	
	\bibitem[TV17]{TuVi-monoidal-tqft}
	V.G.~Turaev and A.~Virelizier.
	\newblock {\em Monoidal categories and topological field theory}, volume 322 of
	{\em Progress in Mathematics}.
	\newblock Birkh{\"a}user/Springer, Cham, 2017.
	\newblock \href {https://doi.org/10.1007/978-3-319-49834-8}
	{\path{doi:10.1007/978-3-319-49834-8}}.
	
	\bibitem[Yam89]{Ya-invariant-graphs}
	S.~Yamada.
	\newblock An invariant of spatial graphs.
	\newblock {\em J. Graph Theory}, 13(5):537--551, 1989.
	\newblock \href {https://doi.org/10.1002/jgt.3190130503}
	{\path{doi:10.1002/jgt.3190130503}}.
	
\end{thebibliography}

\end{document}